\theoremstyle{definition}
\theoremstyle{plain}
\newtheorem{theorem}{Theorem}[section]
\newtheorem{proposition}[theorem]{Proposition}
\newtheorem{lemma}[theorem]{Lemma}
\newtheorem{corollary}[theorem]{Corollary}
\newtheorem{observation}[theorem]{Observation}
\theoremstyle{definition}
\newtheorem{definition}[theorem]{Definition}
\newtheorem{parag}[theorem]{}
\newtheorem{example}[theorem]{Example}
\newtheorem{examples}[theorem]{Examples}
\newtheorem{notation}[theorem]{Notation}
\newtheorem{remark}[theorem]{Remark}
\theoremstyle{remark}
\newenvironment{enumerata}%
{\begin{enumerate}
\renewcommand{\theenumi}{\alph{enumi}}
}{\end{enumerate}}
\newcommand{\Spec}{	\operatorname{{\rm Spec}}}
\newcommand{\Proj}{	\operatorname{{\rm Proj}}}
\newcommand{\haut}{	\operatorname{{\rm ht}}}
\newcommand{\supp}{	\operatorname{{\rm supp}}}
\newcommand{\image}{	\operatorname{{\rm im}}}
\newcommand{\bideg}{	\operatorname{{\rm bideg}}}
\newcommand{\trdeg}{	\operatorname{{\rm trdeg}}}
\newcommand{\Frac}{	\operatorname{{\rm Frac}}}
\newcommand{\Char}{	\operatorname{{\rm char}}}
\newcommand{\Div}{	\operatorname{{\rm Div}}}
\renewcommand{\div}{	\operatorname{{\rm div}}}
\newcommand{\Miss}{	\operatorname{{\rm Miss}}}
\newcommand{\dic}{	\operatorname{{\rm dic}}}
\newcommand{\Rec}{	\operatorname{{\rm Rec}}}
\newcommand{\lco}{	\operatorname{{\rm lco}}}
\newcommand{\Galg}{	\operatorname{\Gamma_{\!\text{\rm alg}}}}
\newcommand{\setspec}[2]{\big\{\,#1\, \mid \,#2\, \big\}}
\newcommand{\Integ}{\ensuremath{\mathbb{Z}}}
\newcommand{\Nat}{\ensuremath{\mathbb{N}}}
\newcommand{\Comp}{\ensuremath{\mathbb{C}}}
\newcommand{\Reals}{\ensuremath{\mathbb{R}}}
\newcommand{\aff}{\ensuremath{\mathbb{A}}}
\newcommand{\proj}{\ensuremath{\mathbb{P}}}
\newcommand{\bk}{{\ensuremath{\rm \bf k}}}
\newcommand{\ck}{{\bar{\bk}}}
\newcommand{\kk}[1]{\bk^{[#1]}}
\newcommand{\bbV}{\ensuremath{\mathbb{V}}}
\newcommand{\pgoth}{{\ensuremath{\mathfrak{p}}}}
\newcommand{\qgoth}{{\ensuremath{\mathfrak{q}}}}
\newcommand{\mgoth}{{\ensuremath{\mathfrak{m}}}}
\newcommand{\Aeul}{\EuScript{A}}
\newcommand{\Beul}{\EuScript{B}}
\newcommand{\Meul}{\EuScript{M}}
\newcommand{\Neul}{\EuScript{N}}
\newcommand{\Oeul}{\EuScript{O}}
\newcommand{\Seul}{\EuScript{S}}
\newcommand{\isom}{\cong}
\renewcommand{\epsilon}{\varepsilon}
\renewcommand{\phi}{\varphi}
\renewcommand{\emptyset}{\varnothing}
\newlength{\mylength}
\newcommand{\abh}{%
\raisebox{-1.2mm}{%
\setlength{\unitlength}{1mm}%
\begin{picture}(4.4,4.4)(-2.2,-2.2)
\put(0,0){\circle{2}}
\put(-2,0){\line(1,0){4}}
\end{picture}}}
\newcommand{\rien}[1]{}
\begin{document}
\renewcommand{\baselinestretch}{1.07}


\title[Very good and very bad field generators]{Very good and very bad field generators}

\author{Pierrette Cassou-Nogu\`es}
\author{Daniel Daigle}

\address{IMB, Universit\'e Bordeaux 1 \\
351 Cours de la lib\'eration, 33405, Talence Cedex, France}
\email{Pierrette.Cassou-nogues@math.u-bordeaux1.fr}

\address{Department of Mathematics and Statistics\\
University of Ottawa\\
Ottawa, Canada\ \ K1N 6N5}
\email{ddaigle@uottawa.ca}

\thanks{Research of the first author partially supported by Spanish grants MTM2010-21740-C02-01 and  MTM2010-21740-C02-02.}
\thanks{Research of the second author supported by grant RGPIN/104976-2010 from NSERC Canada.}

{\renewcommand{\thefootnote}{}
\footnotetext{2010 \textit{Mathematics Subject Classification.}
Primary: 14R10, 14H50.}}

{\renewcommand{\thefootnote}{}
\footnotetext{ \textit{Key words and phrases:}
Affine plane, birational morphism, plane curve, rational polynomial, field generator, dicritical.}}

\begin{abstract} 
Let $\bk$ be a field.
A {\it field generator\/} is a polynomial $F \in \bk[X,Y]$ satisfying $\bk(F,G) = \bk(X,Y)$ for some
$G \in \bk(X,Y)$. If $G$ can be chosen in $\bk[X,Y]$, we call $F$ a {\it good field generator\/};
otherwise, $F$ is a {\it bad field generator}.
These notions were first studied by Abhyankar, Jan and Russell in the 1970s.
The present paper introduces and studies the notions of ``very good'' and ``very bad'' field generators.
We give theoretical results as well as new examples of bad and very bad field generators.
\end{abstract}

\maketitle
  
\vfuzz=2pt


\section{Introduction}
\label {SEC:Introduction}

Throughout this paper, $\bk$ denotes an arbitrary field unless otherwise specified.

\medskip
If $R$ is a subring of a ring $S$, the notation $S=R^{[n]}$ means that $S$ is $R$-isomorphic
to a polynomial algebra in $n$ variables over $R$.
If $L/K$ is a field extension, $L = K^{(n)}$ means that $L$ is a purely transcendental extension of $K$,
of transcendence degree $n$.
We write $\Frac R$ for the field of fractions of a domain $R$.
All curves and surfaces are irreducible and reduced.

\begin{definition} \label {0v23g7h23kjli320e}
Let $A = \kk2$ and $K = \Frac A$.
A {\it field generator of $A$} is an $F \in A$ satisfying $K = \bk(F,G)$ for some $G \in K$.
A {\it good field generator of $A$} is an $F \in A$ satisfying $K = \bk(F,G)$ for some $G \in A$.
A field generator which is not good is said to be {\it bad}.
\end{definition}

Field generators are studied in \cite{JanThesis}, \cite{Rus:fg}, \cite{Rus:fg2}
and \cite{Cassou-BadFG}.
The first example of bad field generator was given in \cite{JanThesis},
and more examples were given in \cite{Rus:fg2} and \cite{Cassou-BadFG}.
Among other things, \cite{Rus:fg2} showed that $21$ and $25$ are the smallest integers $d$
such that there exists a bad field generator of degree $d$.

The notions of good and bad field generators are classical. 
We shall now introduce the notions of  ``very good'' and ``very bad'' field generators.
Before doing so, let us first adopt a convention that we shall keep throughout this paper.
Namely,  let us agree that
the notation ``$A \preceq B$'' means that all the following conditions are satisfied:
$$
A = \kk2, \quad B = \kk2, \quad A \subseteq B \quad \text{and} \quad \Frac A = \Frac B .
$$

Observe that if $F \in A \preceq B$, then
$F$ is a field generator of $A$ iff $F$ is a field generator of $B$.
Moreover, if $F$ is a good field generator of $A$ then it is a good field generator of $B$
(and consequently, if it is a bad field generator of $B$ then it is a bad field generator of $A$).
However, it might happen that $F$ be a bad field generator of $A$ and  a good field generator of $B$.
These remarks suggest the following:

\pagebreak[3]
\begin{definition}  \label {Eir128372et12uhwjd}
Let $F \in A = \kk2$ be a field generator of $A$.
\begin{enumerate}

\item $F$ is a {\it very good\/} field generator of $A$ if it is a good field generator
of each $A'$ satisfying $F \in A' \preceq A$.

\item $F$ is a {\it very bad\/} field generator of $A$ if it is a bad field generator
of each $A'$ satisfying $A' \succeq A$.

\end{enumerate}
\end{definition}

It is interesting to note that the notion of very good field generator suggested itself in
a natural way, in our study \cite{CN-Daigle:Lean} of ``lean factorizations'' of morphisms $\aff^2 \to \aff^1$.
The definition of very bad field generator then follows by symmetry.

It is clear that ``very good'' implies ``good'' and that ``very bad'' implies ``bad''.
Examples of very good field generators are easy to find; moreover,
it follows from \ref{q23328r83yd74r9128}--\ref{pf9823f898012dj}
that certain well-studied classes of polynomials are included in that of very good field generators.
Paragraph \ref{GoodBadUgly} gives examples of very bad field generators, 
of bad field generators which are not very bad, and of good ones which are not very good.

\begin{notation}
Let $A = \kk2$.  Given $F \in A \setminus \bk$, we let
$\Galg(F,A)$ denote the set of prime ideals $\pgoth$ of $A$ such that
the composite $\bk[F] \hookrightarrow A \to A/\pgoth$ is an isomorphism.
We also let 
$\Gamma(F,A) = \setspec{ V(\pgoth) }{ \pgoth \in \Galg(F,A) }$,
i.e., $\Gamma(F,A)$ is the set of curves $C \subset \Spec A$ which have the property
that the composite $C \hookrightarrow \Spec A \to \Spec\bk[F]$ is an isomorphism.
Note that $\pgoth \mapsto V(\pgoth)$ is a bijection  $\Galg(F,A) \to \Gamma(F,A)$.
\end{notation}

The set $\Gamma(F,A)$ (or equivalently $\Galg(F,A)$) plays an important role
in our study of field generators.
One of our main results is Theorem~\ref{9854dnc2mrhvfc}, which asserts that if 
$F$ is a field generator of $A=\kk2$ satisfying $| \Gamma(F,A) | > 2$
then there exist $X,Y$ such that $A=\bk[X,Y]$ and $F = \alpha(Y)X+\beta(Y)$ for some
$\alpha(Y), \beta(Y) \in \bk[Y]$
(note that  $\alpha(Y)X+\beta(Y)$ is a very good field generator of an especially simple type).
In particular, if $F$ is a bad field generator of $A$ then $|\Gamma(F,A)| \in \{0,1,2\}$,
where (by \ref{92898fhuqnvalh893}) the three cases occur,
and where (by \ref{88x889adb823mdnfv}) the case  $|\Gamma(F,A)| = 0$ is equivalent 
to $F$ being very bad.
This last equivalence is a characterization of very bad field generators which turns out to be 
easy to use in practice.
A characterization of very good field generators is not known,
but \ref{q23328r83yd74r9128} is a partial result in that direction.

Section~\ref{SEC:VGVBFGs} also shows
how to construct very bad field generators from a given bad field generator.
We use that construction method in proofs (for instance in \ref{0d023epoi012p94938yte6q}) and also for
giving new examples of bad and very bad field generators (\ref{GoodBadUgly}).
Our aim, with these examples, is not to give explicit polynomials (which would be in principle easy),
but rather to demonstrate the method.

The main results are in sections \ref{SEC:CardGammaFG} and \ref{SEC:VGVBFGs},
but results \ref{89329d675fd43q} and \ref{difja;skdfj;aksd} are also noteworthy.

\medskip
We reiterate that
$\bk$ denotes an arbitrary field (unless otherwise specified) throughout this paper.
We write $\aff^n$ or $\aff_\bk^n$ for the affine $n$-space over $\bk$, i.e., the scheme $\Spec A$ where $A=\kk n$.

\section{Dicriticals}
\label {SEC:PrelimsPartII}

\begin{definition}  \label {difp23qwjksd}
Given a field extension $L \subseteq M$, let $\bbV(M/L)$ be the set of valuation rings $R$
satisfying $L \subseteq R \subseteq M$, $\Frac R = M$ and $R \neq M$.

Given a pair $(F,A)$ such that $A = \kk2$ and $F \in A \setminus \bk$, define
$$
\bbV^\infty(F,A)
= \setspec{ R \in \bbV( K /\bk(F) ) }{  A \nsubseteq R } \quad \text{where $K = \Frac A$.}
$$
Then $\bbV^\infty(F,A)$ is a nonempty finite set which depends only on the pair $(\bk(F), A)$.
For each $R \in \bbV^\infty(F,A)$, let $\mgoth_R$ be the maximal ideal of $R$.
Let $R_1, \dots, R_t$ be the distinct elements of $\bbV^\infty(F,A)$
and $d_i = [ R_i/\mgoth_{R_i} : \bk(F) ]$ for $i = 1, \dots, t$.
Then we define
$$
\Delta(F,A) = [ d_1, \dots, d_t ] \quad\text{and}\quad \dic(F,A) = | \bbV^\infty(F,A) | = t
$$
where $[ d_1, \dots, d_t ]$ is an unordered $t$-tuple of positive integers.

Given $A = \kk2$ and $F \in A \setminus \bk$,
we call the elements of  $\bbV^\infty(F,A)$ the {\it dicriticals\/} of $(F,A)$, or of $F$ in $A$;
given $R \in \bbV^\infty(F,A)$, we call $[ R/\mgoth_R : \bk(F) ]$ the {\it degree of the dicritical\/}~$R$.
\end{definition}

\begin{remark} \label {939ri09109e2xj9cru}
Let $A = \kk2$ and  $F \in A \setminus \bk$.
Choose a pair $\gamma = (X,Y)$ satisfying $A = \bk[X,Y]$ and consider the embedding of $\aff^2$ in $\proj^2$,
$(x,y) \mapsto (1:x:y)$, determined by $\gamma$.
That is, identify $\aff^2$ with the complement of the line ``$W=0$'' in 
$\proj^2 = \Proj\bk[W,X,Y]$, where $\bk[W,X,Y] = \kk3$ is $\Nat$-graded by total degree in $W,X,Y$.
Consider the closed subset $V(F)$ of $\aff^2$ and its closure $\overline{V(F)}$ in $\proj^2$.

For each $R \in \bbV^\infty(F,A)$, there exists a unique point $Q_R \in \proj^2$
such that $R$ is centered at $Q_R$ (i.e., $R$ dominates the local ring of $\proj^2$ at $Q_R$).
One can see that $R \mapsto Q_R$ is a surjective set map
$ \bbV^\infty(F,A) \to \overline{V(F)} \setminus V(F)$.
It follows that 
\begin{equation} \label {89cxcubvcaw1ewxdfey}
\dic(F,A)\ \ge\  | \overline{V(F)} \setminus V(F) | .
\end{equation}
Note that \eqref{89cxcubvcaw1ewxdfey} is valid for every choice of $\gamma=(X,Y)$.
The right hand side of \eqref{89cxcubvcaw1ewxdfey}  depends on $(F,A,\gamma)$,
but $\dic(F,A)$ depends only on $(F,A)$.
\end{remark}

\begin{remark}  \label {difqwkae.dmnkdfuuuu6}
Except for the notations,
our definitions of ``dicritical'' and of ``degree of dicritical'' are identical to those 
given by Abhyankar in \cite{Abh:DicDiv2010} (see the last sentence of page 92).
Note, however, that many authors use a definition formulated in terms of horizontal curves at infinity.
Let us make the link between those two approaches. 
For this discussion, we assume that $\bk$ is algebraically closed.
Consider a pair $(F,A)$ such that $A = \kk2$ and $F \in A \setminus \bk$.
Let us use the abbreviation $\bbV = \bbV( \Frac(A) /\bk(F) )$;
then $\bbV^\infty(F,A) = \setspec{ R \in \bbV }{  A \nsubseteq R }$.

Let $f : \aff^2 = \Spec A \to \aff^1 = \Spec\bk[F]$ be the morphism determined by the inclusion $\bk[F] \to A$.
Then there exists a (non unique) commutative diagram
\begin{equation}  \label{dkfjasodfla2}
\xymatrix{
\aff^2 \ar[d]_{f} \ar @<-.4ex> @{^{(}->}[r]  &  X \ar[d]^{\bar f}  \\
\aff^1 \ar @<-.4ex> @{^{(}->}[r] & \proj^1
}
\end{equation}
where $X$ is a nonsingular projective surface, the arrows ``$\hookrightarrow$'' are open immersions and
$\bar f$ is a morphism.
Let us say that a curve $C \subset X$ is ``horizontal'' if it satisfies $\bar f(C) = \proj^1$,
let $H$ denote the set of curves $C \subset X$ which are horizontal and let
$H^\infty = \setspec{ C \in H }{ C \subseteq X \setminus \aff^2 }$.
Several authors refer to the elements of $H^\infty$ as 
the dicriticals of $f$.
For each $C\in H^\infty$, the degree of the morphism $\bar f|_C : C \to \proj^1$
is then called the ``degree of the dicritical'' $C$.

Given $C \in H$, let $\xi_C$ be the generic point of $C$ and observe that the local ring $\Oeul_{X,\xi_C}$ of $X$ at
the point $\xi_C \in X$ is an element of $\bbV$. In fact the map
$C \mapsto \Oeul_{X,\xi_C}$ from $H$ to  $\bbV$ is bijective, and so is its restriction
$H^\infty \to \bbV^\infty(F,A)$.  Thus 
$$
\dic(F,A)=|H^\infty|,
$$
where $\dic(F,A)$ is defined in \ref{difp23qwjksd}.
We also note that if $C \in H$ corresponds to $R = \Oeul_{X, \xi_C} \in \bbV$ by the above bijection $H \to \bbV$,
then the degree of $\bar f|_C : C \to \proj^1$ is equal to $[ \bk(C) : \bk( \proj^1 ) ]$,
where the function field $\bk(C)$ of $C$ can be identified with the residue field of $R$; so
$\deg(\bar f|_C) = [ R/\mgoth_R : \bk(F) ]$.
Consequently, if we write $H^\infty = \{ C_1, \dots, C_t \}$ then
$$
\Delta(F,A) = \big[ \deg(\bar f|_{C_1} ),\dots, \deg(\bar f|_{C_t} ) \big] ,
$$
where $\Delta(F,A)$ is defined in \ref{difp23qwjksd}.
To summarize, our definition (\ref{difp23qwjksd}) of dicriticals and of their degrees is 
equivalent (via the bijection  $H^\infty \to \bbV^\infty(F,A)$, $C \mapsto \Oeul_{X,\xi_C}$) 
to that given in terms of $H^\infty$.

Let us be precise about the use of language (regarding dicriticals or some equivalent concept) in
\cite{BartoCassou:RemPolysTwoVars}, \cite{Cassou-BadFG}, \cite{Rus:fg} and \cite{Rus:fg2},
since we are going to refer to those papers.

Papers \cite{BartoCassou:RemPolysTwoVars} and \cite{Cassou-BadFG} follow
the $H^\infty$ approach, but do not use the word ``dicritical''.
The elements of $H^\infty$ are called ``horizontal curves'' or ``horizontal components'',
and the degree of such a curve $C$ is defined to be the degree of $\bar f|_C$.

Papers \cite{Rus:fg} and \cite{Rus:fg2} simply speak of points at infinity, instead of dicriticals.
Let us explain this.
Let $\bk$ be any field, $A=\kk2$, $F \in A \setminus \bk$, $L=\Frac A$ and $K=\bk(F)$.
Then $L/K$ is a function field in one variable.
Let $C_F$ be the complete regular curve over $K$ whose function field is $L$.
Let $P$ be a closed point of $C_F$ and $(\Oeul_P,\mgoth_P)$ the local ring of $C_F$ at $P$;
by definition of the degree of a point on a variety, $\deg P = [\Oeul_P/\mgoth_P : K]$;
if $A \nsubseteq \Oeul_P$, one says (in \cite{Rus:fg} and \cite{Rus:fg2}) that $P$ is a ``point of $C_F$ at infinity''
(or a ``place of $C_F$ at infinity'').
As is well known, the set of closed points of $C_F$ can be identified with $\bbV(L/K)$
via the bijection $P \mapsto \Oeul_P$;
then the set $\bbV^\infty(F,A)$ of dicriticals is precisely the set of points of $C_F$ at infinity
and the degrees of the dicriticals are the degrees of the points.  
For instance the sentence
``$C_F$ has exactly two places at infinity, one of degree $2$ and one of degree $3$'',
in \cite[p.\ 324]{Rus:fg2}, means that $\Delta(F,A)=[2,3]$.

This closes the remark on terminology.
Our terminology, throughout, is that of \ref{difp23qwjksd}.
\end{remark}

A {\it function field in one variable\/} is a finitely generated field extension of
transcendence degree $1$.

\begin{notation} \label {2z5vbtf7e4jhr}
Given a function field in one variable $L/K$, we set
$$
\eta(L/K) = \gcd \setspec{ [ R/\mgoth_R : K ] }{ R \in \bbV(L/K) }
$$
where $\bbV(L/K)$ is defined in \ref{difp23qwjksd}.
Note that if $L/K$ has a $K$-rational point\footnote{A $K$-rational point is an $R \in \bbV(L/K)$
satisfying $R/\mgoth_R=K$.} then $\eta(L/K)=1$.
In particular, if $L = K^{(1)}$ then $\eta(L/K)=1$.
\end{notation}

\begin{theorem}  \label {89329d675fd43q}
Let $\bk$ be a field, $A=\kk2$, $F \in A \setminus \bk$ and $\Delta(F,A) = [d_1, \dots, d_t]$.
Then
$$
\gcd(d_1, \dots, d_t) = \eta \big(\Frac(A)/\bk(F) \big).
$$
In particular, if $F$ is a field generator of $A$ then $\gcd(d_1, \dots, d_t) =1$.
\end{theorem}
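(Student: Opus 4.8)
The plan is to prove the two numbers coincide by showing each divides the other, working with the correspondence between dicriticals of $(F,A)$ and certain valuation rings of $L/K$ established in Definition~\ref{difp23qwjksd}. Write $L = \Frac(A)$, $K = \bk(F)$, and recall that $\bbV^\infty(F,A) = \setspec{R \in \bbV(L/K)}{A \nsubseteq R}$, with $d_1,\dots,d_t$ the residue degrees of its members. Set $d = \gcd(d_1,\dots,d_t)$ and $\eta = \eta(L/K) = \gcd\setspec{[R/\mgoth_R:K]}{R \in \bbV(L/K)}$. Since every dicritical is in particular an element of $\bbV(L/K)$, we have $\eta \mid d_i$ for all $i$, hence $\eta \mid d$ immediately. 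The content of the theorem is the reverse divisibility $d \mid \eta$.

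For $d \mid \eta$, I would argue that $d$ divides the residue degree $[R/\mgoth_R:K]$ of \emph{every} $R \in \bbV(L/K)$, not just the dicritical ones. So fix an arbitrary $R \in \bbV(L/K)$; if $A \nsubseteq R$ then $R$ is a dicritical and $d \mid [R/\mgoth_R:K]$ by definition, so assume $A \subseteq R$. The key geometric fact is that such an $R$ corresponds to a closed point $P$ of the curve $C = \Spec A$ lying on a fibre $f^{-1}(f(P))$ of the morphism $f : \Spec A \to \Spec\bk[F]$; more precisely, $R$ dominates the local ring $\Oeul_{C,P}$ and $R/\mgoth_R$ is a finite extension of the residue field $\kappa(P)$, while $\kappa(P)$ contains $\bk[F]/\mgoth_{f(P)}$. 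The cleanest route is to invoke that the principal divisor $\div(F - c)$ on the complete model of $L/K$ has degree $0$ for a generic value $c$, or equivalently that for a suitable $c \in \bk$ the fibre $F = c$, together with the points at infinity, gives a relation $\sum_{\text{affine } P} (\text{mult}) \cdot \deg P = \sum_{\text{dicriticals } R} (\text{mult}) \cdot [R/\mgoth_R:K]$ where every affine $P$ on the fibre $F=c$ has residue degree divisible by... — actually this needs care, so let me state the mechanism I would actually use: pick $c \in \bk$ such that $F - c$ is a unit in $R$ (possible since $K \subseteq R$ and $\mgoth_R \neq 0$); then the residue field $R/\mgoth_R$ is generated over $\bk$ by the image of $A$, and since $F - c$ maps to a nonzero element, one gets $R/\mgoth_R = \bk[F]/(F-c) \cdot (\text{something})$... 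The honest approach: the fibre divisor $\div_0(F-c)$ on the complete curve $C_F$ (in the language of Remark~\ref{difqwkae.dmnkdfuuuu6}) satisfies $\deg \div_0(F-c) = [L : K]$ for every $c$, and $\div_\infty(F-c) = \sum_i e_i R_i$ is supported exactly on the dicriticals with $\sum_i e_i d_i = [L:K]$; hence $d = \gcd(d_i) \mid [L:K]$. Then for any $R \in \bbV(L/K)$ lying over the place $F = c$ of $K$, the local contribution satisfies (residue degree of $R$) $\times$ (something) summing to $[L:K]$ along that fibre, but I want divisibility of a single residue degree — so instead I use: $\eta \mid [L:K]$ as well (standard), and both $d$ and $\eta$ divide $[L:K]$, which is not yet enough.

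The clean finish I would aim for: show $d \mid [R/\mgoth_R : K]$ for a single well-chosen $R \in \bbV(L/K)$ that realizes the gcd $\eta$ — but rather than chase that, the robust argument is that $d$ is precisely the gcd over the dicriticals, and by the theory of function fields the subgroup of $\Integ$ generated by all residue degrees $[R/\mgoth_R:K]$ (i.e. $\eta\Integ$) is generated already by the residue degrees at any nonempty set of places whose complement's residue degrees it must also absorb via a principal divisor relation: for each non-dicritical place, choosing $c$ as above exhibits $\div(F-c)$ with $\div_\infty$ supported on dicriticals, so the class of any affine place's residue degree in $\Integ/d\Integ$ is constrained — concretely, $\div_0(F-c) \equiv 0 \pmod{d}$ in the degree, forcing the degrees along each fibre to sum to a multiple of $d$; taking $c$ to vary and using that a generic fibre of $f$ is geometrically a single reduced point over a large ground extension, one gets that each individual affine residue degree is divisible by $d$. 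The last statement, $\gcd(d_i)=1$ when $F$ is a field generator, then follows since $F$ being a field generator means $L = K^{(1)}$, whence $\eta(L/K) = 1$ by Notation~\ref{2z5vbtf7e4jhr}.

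I expect the main obstacle to be precisely the passage from "$d$ divides every $\div_\infty(F-c)$-degree, hence $d \mid [L:K]$" to "$d$ divides the residue degree of every individual place of $L/K$" — i.e. controlling the affine fibres one place at a time rather than in aggregate. The resolution should come from choosing $c$ so that $F - c$ has a \emph{simple} zero at the chosen place $P$ (generic $c$), so that $\deg P$ appears with multiplicity one in $\div_0(F-c)$, combined with the fact that the other zeros can be arranged to have degrees already divisible by $d$, or alternatively by a direct local computation showing $R/\mgoth_R$ is a finite extension of $\bk$ whose degree is forced by the dicritical data through the Riemann–Roch / degree formula on $C_F$.
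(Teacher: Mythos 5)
Your reduction is right: $\eta \mid d$ is immediate, the whole content is $d \mid [R/\mgoth_R:K]$ for every $R \in \bbV(L/K)$, and the ``in particular'' clause does follow from $L = K^{(1)} \Rightarrow \eta(L/K)=1$. But the mechanism you propose for the key step cannot work, and you sense this yourself. The elements $F-c$ with $c\in\bk$ lie in $K^*=\bk(F)^*$, and every $R\in\bbV(L/K)$ contains the field $K$ by definition; hence $F-c$ is a unit in every such $R$ and $\div(F-c)=0$ in $\Div(L/K)$. The ``fibre $F=c$'' lives on the surface $\Spec A$, not on the curve $C_F$ over $K$, and the quantity $[L:K]$ you invoke is infinite ($L/K$ has transcendence degree $1$). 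Likewise a non-dicritical $R$ does not correspond to a closed point of $\Spec A$ but to a height-one prime $\pgoth$ of $A$ with $\pgoth\cap\bk[F]=0$, i.e.\ an $f$-horizontal curve. So the aggregate degree relations you write down are either vacuous or not well formed, and in any case they never isolate a single place.

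The missing idea is to pass to the localization $\Aeul=S^{-1}A$ with $S=\bk[F]\setminus\{0\}$: this is a one-dimensional UFD, hence a PID, with $\Frac\Aeul=L$ and $K\subseteq\Aeul$. A non-dicritical $R$ satisfies $\Aeul\subseteq R$, and $\mgoth_R\cap\Aeul$ is a maximal ideal $(g)$ of $\Aeul$ with $R=\Aeul_{(g)}$; after scaling by an element of $K^*$ one may take $g\in A$. Now $\div(g)$ is the divisor that does the job: $g$ is a uniformizer of $R$; since $g\in A$, $g$ lies in every non-dicritical place; and $R$ is the \emph{only} non-dicritical place whose maximal ideal contains $g$ (any such place equals $\Aeul_{(g)}=R$). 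Hence $\div(g)=1\cdot R+\sum_i v_i(g)R_i$ with the $R_i$ the dicriticals, and $\deg\div(g)=0$ gives $[R/\mgoth_R:K]=-\sum_i v_i(g)\,d_i\in d\Integ$. This is exactly the one-place-at-a-time control you were looking for; no choice of $c$ in $\div(F-c)$ can supply it.
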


\begin{proof}
Let $\Aeul = S^{-1}A$, where $S = \bk[F] \setminus \{0\}$;
let $K=\bk(F)$ and $L = \Frac A = \Frac \Aeul$.
As $\Aeul$ is a domain and a finitely generated $K$-algebra, its Krull dimension
is $\dim\Aeul = \trdeg(\Aeul/K)=1$; being a $1$-dimensional UFD, $\Aeul$ is a PID.

Let $R_1, \dots, R_t$ be the distinct elements of the subset $\bbV^\infty(F,A)$ of $\bbV(L/K)$,
let $d_i = [ R_i/\mgoth_{R_i} : K]$ for $1 \le i \le t$ and let $d = \gcd(d_1,\dots,d_t)$.
It is obvious that $\eta(L/K) \mid d$.
So it's enough to show that $[R/\mgoth_R : K] \in d\Integ$ for every $R\in\bbV(L/K)$.

If $R \in \bbV^\infty(F,A)$ then $[R/\mgoth_R : K] \in \{ d_1, \dots, d_t \}$, so 
$[R/\mgoth_R : K] \in d\Integ$ is obvious.

Let $R \in \bbV(L/K) \setminus \bbV^\infty(F,A)$.
Then $A \subseteq R$, so $\Aeul \subseteq R$ and consequently
$\mgoth_R \cap \Aeul$ is a prime ideal of $\Aeul$.
If $\mgoth_R \cap \Aeul=0$ then $\Frac \Aeul \subseteq R$, which contradicts the definition of $\bbV(L/K)$.
So $\mgoth_R \cap \Aeul$ is a maximal ideal $\mgoth$ of $\Aeul$; then $R = \Aeul_\mgoth$.
Since $\Aeul$ is a PID, $\mgoth=(g)$ for some $g \in \Aeul\setminus\{0\}$.
Multiplying $g$ by a suitable element of $K^*$, we may (and we do) arrange that $g \in A\setminus\{0\}$.

As $g \in L^*$, we may consider the principal divisor
$\div(g) \in \Div(L/K)$, where $\Div(L/K)$ is the free abelian group on the set $\bbV(L/K)$, written additively.
Note that (i)~$g$ is a uniformizing parameter of $R$;
(ii)~since $g\in A$, $g$ belongs to each element of $\bbV(L/K) \setminus \bbV^\infty(F,A)$;
and (iii)~if $R'$ is any element of $\bbV(L/K) \setminus \bbV^\infty(F,A)$ satisfying $g \in \mgoth_{R'}$,
then $\mgoth_{R'} \cap \Aeul=\mgoth$ and consequently $R'=\Aeul_\mgoth=R$.
Thus
$$
\div(g) = 1 R + \textstyle \sum_{i=1}^t v_i(g) R_i,
$$
where $v_i$ is the valuation of $R_i$. Since $\div(g)$ has degree $0$,
$$
0 = \big[R / \mgoth_R : K \big] +  \textstyle \sum_{i=1}^t v_i(g) \big[ R_i/\mgoth_{R_i} : K]
= \big[R/\mgoth_R : K \big] +  \textstyle \sum_{i=1}^t v_i(g) d_i,
$$
so $\big[R/ \mgoth_R : K \big] \in d\Integ$.
It follows that $d \mid \eta(L/K)$ and hence that $d=\eta(L/K)$, as desired.

If $F$ is a field generator of $A$ then $L = K^{(1)}$, so $\eta(L/K)=1$ and consequently $d=1$.
\end{proof}

Our next objective is to study how $\Delta(F,A)$ behaves under a birational extension of $A$.
Before doing this, we first need to discuss birational morphisms $\aff^2 \to \aff^2$.

\begin{definition}
Let $\Phi : X \to Y$ be a morphism of nonsingular algebraic surfaces over $\bk$.
Assume that $\Phi$ is {\it birational}, i.e., that there exist nonempty Zariski-open subsets $U \subseteq X$
and $V \subseteq Y$ such that $\Phi$ restricts to an isomorphism $U\to V$.
By a {\it missing curve\/} of $\Phi$ we mean a curve $C \subset Y$
such that $C \cap \Phi(X)$ is a finite set of closed points.
We write $\Miss(\Phi)$ for the set of missing curves of $\Phi$. Note that $\Miss(\Phi)$ is a finite set.
\end{definition}

\begin{lemma} \label {0923092cn3r98z29eu}
Let $\Phi : \aff^2 \to \aff^2$ be a birational morphism.
\begin{enumerata}

\item \label {90dgh3576tjv0498ugjh}
Let $C$ be a missing curve of $\Phi$.
Then $C$ has one place at infinity and, consequently,
the units of the coordinate algebra of $C$ are algebraic over $\bk$.

\item \label {0as9a0f9s9s3487jhh3}
Let $C \subset \aff^2$ be a curve which is not a missing curve of $\Phi$.
Then there exists a unique curve $D \subset \aff^2$ such that $\Phi(D)$ is a dense subset of $C$.

\end{enumerata}
\end{lemma}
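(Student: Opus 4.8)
The plan is to study both parts through a single smooth projective model of $\Phi$. Fix coordinate systems identifying the source and target planes of $\Phi$ with $\aff^2\subset\proj^2$, and let $\ell_X,\ell_Y\subset\proj^2$ be the resulting lines at infinity of the source and the target. Let $\overline\Phi$ be the birational transformation of $\proj^2$ extending $\Phi$; since $\Phi$ is a morphism on $\aff^2$, the fundamental locus of $\overline\Phi$ is contained in $\ell_X$, so resolving the indeterminacy of $\overline\Phi$ requires only blowings-up of points lying over $\ell_X$. This produces a smooth projective surface $W$, a proper birational morphism $p\colon W\to\proj^2$ (to the source plane) restricting to an isomorphism $X^\circ:=p^{-1}(\aff^2)\xrightarrow{\ \sim\ }\aff^2$, and a morphism $q:=\overline\Phi\circ p\colon W\to\proj^2$ (to the target plane), necessarily proper and birational, such that $q|_{X^\circ}$ is $\Phi$ under the identification $X^\circ\cong\aff^2$. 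Put $\Beul:=p^{-1}(\ell_X)$ and $\Beul':=q^{-1}(\ell_Y)$. The two structural facts I will use are: (i)~$\Beul$, being the total transform of the line $\ell_X\cong\proj^1$ under a sequence of blowings-up of points, is a connected reduced curve with simple normal crossings whose components are rational and whose dual graph is a tree; and (ii)~since $q(X^\circ)=\Phi(\aff^2)\subseteq\aff^2$ forces $X^\circ\subseteq q^{-1}(\aff^2)$ and hence $\Beul'\subseteq\Beul$, the curve $\Beul'$ (the total transform of $\ell_Y$ under the proper birational $q$) is a connected union of components of $\Beul$, i.e.\ a connected subtree of $\Beul$. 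Establishing (i)--(ii) carefully — or quoting the relevant material of \pid\ — is the one genuinely technical step; the rest is formal.

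Given a curve $C\subset\aff^2$ of the target, let $\overline C\subset\proj^2$ be its closure and $\widehat C\subset W$ the strict transform of $\overline C$ under $q$, so that $q|_{\widehat C}\colon\widehat C\to\overline C$ is proper, birational and surjective. Unwinding the definition of missing curve, and using $q(X^\circ)\subseteq\aff^2$, yields the dichotomy
$$
C\in\Miss(\Phi)\iff\widehat C\cap X^\circ=\emptyset\iff\widehat C\subseteq\Beul .
$$
Indeed, if $\widehat C\cap X^\circ\ne\emptyset$ then (by irreducibility of $\widehat C$) it is dense in $\widehat C$, so $q(\widehat C\cap X^\circ)$ is an infinite subset of $q(\widehat C)\cap q(X^\circ)\subseteq\overline C\cap\Phi(\aff^2)=C\cap\Phi(\aff^2)$, showing $C\notin\Miss(\Phi)$; conversely, if $\widehat C\subseteq\Beul$ then every point of $C\cap\Phi(\aff^2)=C\cap q(X^\circ)$ is the $q$-image of a point of $X^\circ\subseteq W\setminus\Beul$ lying on $q^{-1}(\overline C)$, whose components are $\widehat C$ and finitely many curves contracted by $q$; such a point does not lie on $\widehat C$, so it lies on a contracted curve, and hence there are only finitely many of them.

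For part (b), assume $C\notin\Miss(\Phi)$, so $\widehat C\cap X^\circ\ne\emptyset$; then $D:=\widehat C\cap X^\circ$ is a nonempty, hence dense, open subcurve of $\widehat C$, it is a curve of $\aff^2$, and $q(D)$ is dense in $q(\widehat C)=\overline C$ while $q(D)\subseteq\aff^2\cap\overline C=C$; thus $\overline{\Phi(D)}=C$, which gives existence. For uniqueness, note that $\widehat C$ is the only irreducible curve of $W$ mapped onto $\overline C$ by $q$: any such curve is an irreducible component of the curve $q^{-1}(\overline C)$, whose components are $\widehat C$ together with finitely many curves contracted by $q$. Hence if $D'\subset\aff^2$ is a curve with $\overline{\Phi(D')}=C$, then its closure $\overline{D'}$ in $W$ is an irreducible curve with $q(\overline{D'})=\overline C$, so $\overline{D'}=\widehat C$ and therefore $D'=\overline{D'}\cap X^\circ=D$.

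For part (a), assume $C\in\Miss(\Phi)$, so $\widehat C\subseteq\Beul$; being an irreducible curve lying in the curve $\Beul$, $\widehat C$ is one of its components, hence is smooth, complete and rational, and since $q|_{\widehat C}\colon\widehat C\to\overline C$ is birational it exhibits $\widehat C$ as the normalization of $\overline C$ — that is, as the smooth complete model of $C$. The places of $C$ at infinity then correspond to the points of $\widehat C$ carried into $\ell_Y$ by $q$, i.e.\ to the set $\widehat C\cap\Beul'$. Now $\widehat C$ is not a component of $\Beul'$, since $q(\widehat C)=\overline C\not\subseteq\ell_Y$; and $\widehat C\cap\Beul'\ne\emptyset$, since the closure $\overline C$ of the affine curve $C$ meets $\ell_Y$. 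As $\Beul$ is a simple-normal-crossing tree and $\Beul'$ a connected subtree, a component of $\Beul$ not lying in $\Beul'$ meets $\Beul'$ in at most one point (two intersection points would produce a cycle in the dual graph). Therefore $\#(\widehat C\cap\Beul')=1$, i.e.\ $C$ has exactly one place at infinity. Finally, if $u$ is a unit of the coordinate algebra of $C$, then $u$ has no zero or pole on $C$, so its divisor on the smooth complete model is supported at the unique place $P$ at infinity; since a principal divisor has degree $0$ while $\deg P>0$, we get $\div(u)=0$, whence $u$ lies in the field of constants of $\bk(C)$, a finite extension of $\bk$; in particular $u$ is algebraic over $\bk$.
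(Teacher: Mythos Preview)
Your argument is correct and takes a genuinely different route from the paper. The paper first disposes of the algebraically closed case by citing \cite{Dai:bir} (4.3 for part~(a), 1.1 for part~(b)), and then reduces the general case to it by base-changing to the algebraic closure $\ck$: for (a) it lifts $C$ to a curve $\bar C\subset\Spec\bar A$, applies the known case to get $\bar C$ rational with one place at infinity, and reads off the unit statement from the inclusion of coordinate rings; for (b) it uses that the fibres of $\Spec\bar A\to\Spec A$ are orbits under $\Aut_A(\bar A)$ (via \cite{Matsumura}) to descend uniqueness from $\ck$ to $\bk$. By contrast, you build a single smooth projective model $W$ dominating both copies of $\proj^2$ and read both parts off the combinatorics of the boundary: the tree structure of $\Beul=p^{-1}(\ell_X)$ and the connectedness of the subtree $\Beul'=q^{-1}(\ell_Y)\subseteq\Beul$. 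This works directly over an arbitrary field because resolution of indeterminacy for surfaces, factorization of proper birational morphisms of smooth surfaces into blow-ups of closed points, and Zariski's connectedness theorem are all available in that generality; the components of $\Beul$ are then $\proj^1$'s over (possibly nontrivial) residue fields, which is all you need. What your approach buys is a self-contained geometric argument that avoids the descent step entirely; what the paper's approach buys is brevity, since the substantive case is outsourced to the literature and only the reduction is carried out in the text.
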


\begin{proof}
First consider the case where $\bk$ is algebraically closed.
Then \eqref{90dgh3576tjv0498ugjh} is true by \cite[4.3]{Dai:bir}.
For \eqref{0as9a0f9s9s3487jhh3},
we consider \cite[1.1]{Dai:bir} (with $f=\Phi$, $X=\aff^2$ and $Y=\aff^2$)
and we observe that,
given a curve $C \subset Y$ which is not a missing curve of $f$,
$\tilde C \cap X$ (where $\tilde C \subset Y_n$ is the strict transform of $C$)
is the only curve in $X$ whose image in $Y$ is a dense subset of $C$.
This proves \eqref{0as9a0f9s9s3487jhh3}.
Our task, then, is to show that the Lemma continues to be valid when $\bk$ is an arbitrary field.

Let $A = \kk2$,  $\aff^2 = \Spec A$
and let $\phi : A \to A$ be the $\bk$-homomorphism corresponding to the given $\Phi : \Spec A \to \Spec A$.
Let $\ck$ be an algebraic closure of $\bk$.
Applying $(\ck \otimes_\bk\, \underline{\ \ })$ to $\phi$ gives commutative diagrams
$$
\xymatrix@R=10pt{
\bar A  &  \bar A \ar[l]_{\bar\phi}  \\
A \ar @{^{(}->} [u]   &  A \ar @{^{(}->} [u]   \ar[l]^{\phi}
} \qquad 
\xymatrix@R=10pt{
\Spec\bar A \ar[r]^{\bar\Phi} \ar[d]_{\alpha}  &  \Spec\bar A \ar[d]^{\alpha}  \\
\Spec A \ar[r]_{\Phi}  &  \Spec A \\
} \qquad \raisebox{-5mm}{$\alpha \circ \bar\Phi = \Phi \circ \alpha$}
$$

where $\bar A = \ck \otimes_\bk A = \ck^{[2]}$ and $\bar\Phi$ is a birational morphism.

(a) Since $\bar A$ is integral over $A$, we may choose a curve $\bar C \subset \Spec\bar A$ such that
$\alpha( \bar C ) = C$.
Then $\bar C$ is a missing curve of $\bar \Phi$, so, by \cite[4.3(c)]{Dai:bir}, $\bar C$ is a rational
curve with one place at infinity.
It follows that the coordinate algebra $S$ of $\bar C$ satisfies $S^* = \ck^*$.
As the coordinate algebra $R$ of $C$ satisfies $R \subseteq S$,
we deduce that $C$ has one place at infinity and that $R^* \subseteq S^* = \ck^*$,
which proves (a).

For the proof of (b),
let $G$ be the group of $A$-automorphisms of $\bar A$.
It follows from \cite[Th.\ 5, p.\ 33]{Matsumura} that, for each $\pgoth \in \Spec A$,
the subset $\alpha^{-1}(\pgoth)$ of $\Spec \bar A$ is equal to an orbit of the action of $G$
on $\Spec\bar A$.

Observe that, for any $\pgoth \in \Spec A$, the condition
``$V(\pgoth)$ is a curve in $\Spec A$ which is not a missing curve of $\Phi$''
is equivalent to ``$\haut\pgoth=1$ and $\pgoth \in \image(\Phi)$''
(we are using the fact that $\image(\Phi)$ is a constructible subset of $\Spec A$).
Moreover, if $\pgoth$ satisfies these equivalent conditions and $\qgoth \in \Phi^{-1}(\pgoth)$,
then $\haut\qgoth=1$ and $\Phi$ maps the curve $V(\qgoth)$ to a dense subset of $V(\pgoth)$.
Thus assertion (b) can be stated as follows:
{\it for each $\pgoth \in \Spec A$ such that $\haut\pgoth=1$, $|\Phi^{-1}(\pgoth)| \le 1$}.
Let $\pgoth \in \Spec A$ be such that $\haut\pgoth=1$ and let $\qgoth_1,\qgoth_2 \in \Phi^{-1}(\pgoth)$.
Since $\bar A$ is integral over $A$, we may choose $\bar\qgoth_1, \bar\qgoth_2 \in \Spec \bar A$ such that 
$\alpha(\bar\qgoth_i) = \qgoth_i$ for $i=1,2$.
Then $\alpha( \bar\Phi( \bar\qgoth_i )) = \Phi( \alpha( \bar\qgoth_i )) = \Phi( \qgoth_i ) = \pgoth$ ($i=1,2$),
i.e., $\bar\Phi( \bar\qgoth_1 )$ and $\bar\Phi( \bar\qgoth_2 )$ lie over $\pgoth$.
Then, by \cite[Th.\ 5, p.\ 33]{Matsumura},
there exists $\Theta \in G$ such that $\Theta ( \bar\Phi( \bar\qgoth_1 )) =  \bar\Phi( \bar\qgoth_2 )$.
As $\Theta \circ \bar\Phi = \bar\Phi \circ \Theta$, we have
$\bar\Phi( \Theta ( \bar\qgoth_1 )) =  \bar\Phi( \bar\qgoth_2 )$.
If we define $\bar\pgoth =  \bar\Phi( \bar\qgoth_2 )$ then
$\{ \Theta ( \bar\qgoth_1 ) , \bar\qgoth_2 \} \subseteq \bar\Phi^{-1}( \bar\pgoth )$;
as $\alpha(\bar\pgoth) =\alpha( \bar\Phi( \bar\qgoth_2 )) = \Phi( \alpha( \bar\qgoth_2 )) = \Phi( \qgoth_2 )=\pgoth$
and $\haut\pgoth=1$, we have $\haut\bar\pgoth=1$; by the case ``$\bk=\ck$\,'' of (b),
$|\bar\Phi^{-1}( \bar\pgoth )| \le 1$, so $\Theta ( \bar\qgoth_1 ) =  \bar\qgoth_2$;
consequently, $\alpha( \bar\qgoth_1 ) =  \alpha(\bar\qgoth_2)$, i.e., $\qgoth_1=\qgoth_2$.
\end{proof}

\begin{notation}  \label {293832476hf}
Consider morphisms $\aff^2 \xrightarrow{ \Phi } \aff^2 \xrightarrow{ f } \aff^1$
where $\Phi$ is birational and $f$ is dominant.
Then we write
\begin{align*}
\Miss_{\text{\rm hor}}(\Phi,f)  & = \setspec{ C \in \Miss(\Phi) }{ \text{$f(C)$ is a dense subset of $\aff^1$} }.
\end{align*}
We refer to the elements of $\Miss_{\text{\rm hor}}(\Phi,f)$
as the ``$f$-horizontal'' missing curves of $\Phi$.
\end{notation}

Note that, in the following result, $\Miss_{\text{\rm hor}}(\Phi,f)$ may be empty.
See the introduction for the notation ``$A \preceq B$''.

\begin{proposition}  \label {difja;skdfj;aksd}
Let $A \preceq B$ and $F \in A \setminus \bk$,
and consider the morphisms
\begin{equation*} \label  {8231h4g5hfhhcncbbxm}
\Spec B \xrightarrow{\Phi} \Spec A \xrightarrow{f} \Spec \bk[F]
\end{equation*}
determined by the inclusions $\bk[F] \hookrightarrow A  \hookrightarrow B$.
Let $C_1, \dots, C_h$ be the distinct elements of  $\Miss_{\text{\rm hor}}(\Phi,f)$ and,
for each $i \in \{ 1, \dots, h \}$,
let $\delta_i$ be the degree\footnote%
{Let $R \subseteq S$ be integral domains and $f : \Spec S \to \Spec R$ the corresponding morphism of schemes.
Assume that $\Frac S$ is a finite extension of $\Frac R$.  Then we define $\deg f = [\Frac S : \Frac R]$.}
of the morphism $f|_{C_i} : C_i \to \Spec\bk[F]$.
\begin{enumerata}

\item $\Delta(F,B) = \big[ \Delta(F,A), \delta_1, \dots, \delta_h \big]$,
i.e., $\Delta(F,B)$ is the concatenation of $\Delta(F,A)$ and $[\delta_1, \dots, \delta_h ]$.
In particular, $\dic(F,B) = \dic(F,A) + |\Miss_{\text{\rm hor}}(\Phi,f)|$.

\item For each $i \in \{1, \dots, h\}$, $\delta_i=1 \Leftrightarrow C_i \in \Gamma(F,A)$.

\end{enumerata}
\end{proposition}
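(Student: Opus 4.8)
The plan is to analyze the inclusion of valuation rings by comparing the dicriticals of $(F,A)$ and $(F,B)$ directly as subsets of $\bbV(K/\bk(F))$, where $K = \Frac A = \Frac B$. Fix the notation $\bbV = \bbV(K/\bk(F))$, $\bbV^\infty(F,A) = \{R \in \bbV : A \nsubseteq R\}$ and $\bbV^\infty(F,B) = \{R \in \bbV : B \nsubseteq R\}$. Since $A \subseteq B$, any $R$ with $A \subseteq R$ need not contain $B$, so $\bbV^\infty(F,A) \subseteq \bbV^\infty(F,B)$; the whole content of part (a) is to identify the complement $\bbV^\infty(F,B) \setminus \bbV^\infty(F,A)$ — the set of $R \in \bbV$ with $A \subseteq R$ but $B \nsubseteq R$ — with the set of valuation rings coming from the $f$-horizontal missing curves of $\Phi$, and to check that the degrees match up.

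First I would set up the dictionary between valuation rings of $K/\bk(F)$ and points at infinity of curves, exactly as in Remark~\ref{difqwkae.dmnkdfuuuu6}: localizing $A$ (resp.\ $B$) at $S = \bk[F]\setminus\{0\}$ gives a PID $\Aeul = S^{-1}A$ (resp.\ $\Beul = S^{-1}B$) with $\Frac \Aeul = \Frac \Beul = K$ and $\Aeul \subseteq \Beul$; because both are one-dimensional the only way $\bbV^\infty(F,B) \setminus \bbV^\infty(F,A)$ can be nonempty is via primes of $\Aeul$ that do not survive in $\Beul$, and $\Spec \Beul \to \Spec \Aeul$ is (a localization of) the birational morphism $\Phi$. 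Concretely, an $R \in \bbV$ lies in $\bbV^\infty(F,A)\setminus\bbV^\infty(F,B)$ would force $B \subseteq R$ and $A \subseteq R$, contradiction; an $R$ with $A \subseteq R$, $B \nsubseteq R$ corresponds to a height-one prime $\mgoth$ of $\Aeul$ with $\mgoth \Beul = \Beul$, i.e.\ to a curve $C \subset \Spec A$ dominating $\Spec\bk[F]$ (so that $\mgoth \cap \bk[F] = 0$, i.e.\ $C$ is $f$-horizontal) whose generic point is \emph{not} in the image of $\Phi$ — which by Lemma~\ref{0923092cn3r98z29eu}(b) is exactly the condition that $C$ be a missing curve of $\Phi$. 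Running this equivalence in both directions gives a bijection between $\bbV^\infty(F,B)\setminus\bbV^\infty(F,A)$ and $\Miss_{\text{\rm hor}}(\Phi,f) = \{C_1,\dots,C_h\}$; the extra $R$'s all lie over the origin of $\Spec\bk[F]$ in the sense that $A \subseteq R$, so they do contribute to $\bbV^\infty(F,B)$. Under this correspondence the residue field $R/\mgoth_R$ of the valuation ring attached to $C_i$ is the function field of $C_i$, and $[R/\mgoth_R : \bk(F)] = [\bk(C_i) : \bk(\Spec\bk[F])] = \deg(f|_{C_i}) = \delta_i$; this proves both displayed formulas in (a), the second being just the cardinality statement.

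For part (b), with the identification above in hand, $\delta_i = 1$ means $[\bk(C_i):\bk(F)] = 1$, i.e.\ $f|_{C_i} : C_i \to \Spec\bk[F]$ is a birational morphism of integral affine curves; I would argue that a birational morphism from a curve onto the affine line $\Spec\bk[F] \cong \aff^1$ which, in addition, comes from a missing curve (hence, by Lemma~\ref{0923092cn3r98z29eu}(a), $C_i$ has one place at infinity and $\Oeul(C_i)^* $ is algebraic over $\bk$) is forced to be an isomorphism — the standard point being that a birational morphism between normal affine curves is an open immersion, and an open immersion into $\aff^1$ with a curve having a single place at infinity and no nonconstant units must be all of $\aff^1$; normality of $C_i$ is not automatic, but $\delta_i=1$ forces $\bk[F] \hookrightarrow \Oeul(C_i) \hookrightarrow \Frac\bk[F] = \bk(F)$ with $\Oeul(C_i)$ having no nonconstant units, whence a Nagata-type argument (a subring of $\bk[F]$'s fraction field, finitely generated, containing $\bk[F]$, with trivial unit group relative to $\bk$) pins down $\Oeul(C_i) = \bk[F]$. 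That is precisely the statement $C_i \in \Gamma(F,A)$. Conversely if $C_i \in \Gamma(F,A)$ then $\bk[F] \to \Oeul(C_i)$ is an isomorphism and $\delta_i = 1$ trivially.

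The main obstacle I anticipate is the bookkeeping in the first correspondence over a non-algebraically-closed base: making sure that every $R \in \bbV^\infty(F,B)\setminus\bbV^\infty(F,A)$ genuinely comes from an honest missing \emph{curve} of $\Phi$ (defined over $\bk$, irreducible and reduced as curves are assumed to be in this paper) and not merely from a closed point or from a curve that $\Phi$ hits densely, and that the $f$-horizontal condition corresponds exactly to $\mgoth \cap \bk[F] = 0$. This is where Lemma~\ref{0923092cn3r98z29eu}(b) (uniqueness of the curve mapping densely onto a given non-missing curve) and the finiteness of $\Miss(\Phi)$ do the real work; once that dictionary is clean, the degree computation and part (b) are essentially formal.
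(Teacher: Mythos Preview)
Your proposal is correct and follows essentially the same route as the paper: identify $\bbV^\infty(F,B)\setminus\bbV^\infty(F,A)$ with the local rings $A_{\pgoth_i}$ at the generic points of the $f$-horizontal missing curves (the paper does this directly with $A_\pgoth$ rather than via the localization $\Aeul=S^{-1}A$, but that is purely cosmetic), and for (b) use that $\delta_i=1$ forces $\bk[F]\subseteq A/\pgoth_i\subset\bk(F)$, hence $A/\pgoth_i$ is a localization of the PID $\bk[F]$, which must be trivial by Lemma~\ref{0923092cn3r98z29eu}(a). Your detour through normality and ``Nagata-type'' considerations in (b) is unnecessary---the localization-of-a-PID observation is all that is needed---but the plan is otherwise a faithful preview of the paper's argument.
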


\begin{proof}
(a)
Given local domains $(\Oeul,\mgoth)$ and $(\Oeul',\mgoth')$, we write
$\Oeul\unlhd\Oeul'$ to indicate that $\Oeul$ is a subring of $\Oeul'$ and that $\mgoth' \cap \Oeul = \mgoth$
(i.e., $\Oeul'$ dominates $\Oeul$).
Note that if 
$\Oeul\unlhd\Oeul'$, $\Frac\Oeul = \Frac\Oeul'$ and $\Oeul$ is a valuation ring, then $\Oeul=\Oeul'$.
Let $K = \Frac A = \Frac B$ and, for each $i \in \{ 1, \dots, h \}$,
let $\pgoth_i \in \Spec A$ be the generic point of $C_i$.
We claim:
\begin{equation} \label {c89j21mnbxcgs236o8}
\bbV^\infty(F,B) =  \bbV^\infty(F,A) \cup \big\{ A_{\pgoth_1}, \dots, A_{\pgoth_h} \big\}.
\end{equation}
It is obvious that $\bbV^\infty(F,A) \subseteq \bbV^\infty(F,B)$.
Let $i \in \{ 1, \dots, h \}$.
As $C_i$ is $f$-horizontal, $\pgoth_i \cap \bk[F] = 0$ and hence $A_{\pgoth_i} \in \bbV(K/\bk(F))$.
Since $C_i \in \Miss(\Phi)$, no $\qgoth \in \Spec B$ satisfies $\qgoth \cap A = \pgoth_i$,
so $B \nsubseteq A_{\pgoth_i}$ and hence $A_{\pgoth_i} \in \bbV^\infty(F,B)$. 
This proves ``$\supseteq$'' in \eqref{c89j21mnbxcgs236o8}.

Consider $R \in \bbV^\infty(F,B) \setminus \bbV^\infty(F,A)$.
Then $A \subseteq R$, so $\mgoth_R \cap A$ is an element $\pgoth$ of $\Spec A$.
As $\bk(F) \subset R$, $\pgoth \cap \bk[F] = 0$; so $\haut\pgoth<2$.
If $\pgoth = 0$ then $\Frac A \subseteq R$, which contradicts $R \in \bbV(K/\bk(F))$;
so $\haut\pgoth=1$.
Then $A_\pgoth \unlhd R$ where $A_\pgoth$ is a valuation ring, so $A_\pgoth = R$.
It also follows that $C = V(\pgoth)$ is a curve in $\Spec A$.
If $C \notin \Miss(\Phi)$ then there exists 
$\qgoth \in \Spec B$ satisfying $\qgoth \cap A = \pgoth$; then
$R=A_\pgoth \unlhd B_\qgoth$, so $B_\qgoth=R$ and hence $B \subseteq R$,
a contradiction; so $C \in \Miss(\Phi)$.
As $\pgoth \cap \bk[F]=0$, $C \in \Miss_{\text{\rm hor}}(\Phi,f)$
and consequently $\pgoth=\pgoth_i$ for some $i \in \{ 1, \dots, h \}$.
Then $R = A_{\pgoth_i}$ and \eqref{c89j21mnbxcgs236o8} is proved.

What is meant by $f|_{C_i} : C_i \to \Spec\bk[F]$ is really $\Spec(A/\pgoth_i) \to \Spec\bk[F]$, i.e.,
the morphism determined by the injective homomorphism $\bk[F] \to A/\pgoth_i$.
Let us abuse notation and write $\bk[F] \subseteq A/\pgoth_i$.
Then
$$
\delta_i = \deg(f|_{C_i}) = \big[ \Frac (A/\pgoth_i) : \bk(F) \big]
= \big[ A_{\pgoth_i}/ \pgoth_i A_{\pgoth_i} : \bk(F) \big]
$$
for each $i \in \{ 1, \dots, h \}$.
It follows that $\Delta(F,B) = \big[ \Delta(F,A), \delta_1, \dots, \delta_h \big]$,
because (clearly) \eqref{c89j21mnbxcgs236o8} is a disjoint union
and $A_{\pgoth_1}, \dots, A_{\pgoth_h}$ are distinct.
So (a) is proved.

(b) Let $i \in \{1, \dots, h\}$ and consider $\bk[F] \subseteq A/\pgoth_i$.
The condition $C_i \in \Gamma(F,A)$ is equivalent to $\bk[F] = A/\pgoth_i$,
and $\delta_i=1$ is equivalent to $\bk(F) = \Frac( A/\pgoth_i )$.
So it is clear that $C_i \in \Gamma(F,A)$ implies $\delta_i=1$.
Conversely, assume that $\delta_i=1$.
Then $\bk[F] \subseteq A/\pgoth_i \subset \bk(F)$, which implies that $A/\pgoth_i$ is a localization of $\bk[F]$.
Since the units of $A/\pgoth_i$ are algebraic over $\bk$ by \ref{0923092cn3r98z29eu},
the localization must be trivial, so $\bk[F]=A/\pgoth_i$ and consequently $C_i \in \Gamma(F,A)$. So (b) is proved.
\end{proof}

\section{Rectangular polynomials; properties of the set $\Gamma(F,A)$}
\label {SEC:RecGamma}

\begin{definition}
Let $A = \kk2$ and $\aff^2 = \aff^2_\bk = \Spec A$.
\begin{enumerate}

\item[(i)] A {\it variable\/} of $A$ is an element $F \in A$ satisfying $A = \bk[F,G]$ for some $G \in A$.

\item[(ii)]
A curve $C \subset \aff^2$ is called a {\it coordinate line\/} if $C=V(F)$ for some variable $F$ of~$A$.

\end{enumerate}
\end{definition}

\begin{remark}  \label {f02939d9wid29}
A curve $C \subset \aff^2$ is called a {\it line\/} if $C \isom \aff^1$.
It is clear that every coordinate line is a line,
and the Abhyankar-Moh-Suzuki Theorem (\cite{A-M:line}, \cite{Suzuki})
states that the converse is true if $\Char\bk=0$.
It is known that not all lines are coordinate lines if $\Char\bk > 0$
(on this subject, see e.g.\ \cite{Ganong:Survey} for a survey).
\end{remark}

\begin{definition} \label {90ojhgdew33689jhhf}
Let $A = \kk2$.
\begin{enumerate}

\item Given $F \in A$ and a pair $\gamma = (X,Y)$ such that $A = \bk[X,Y]$,
write $F = \sum_{i,j} a_{ij} X^iY^j$ where $a_{ij} \in \bk$ for all $i,j$;
then $\supp_\gamma(F) = \setspec{(i,j) \in \Nat^2}{ a_{ij} \neq 0 }$
is called the {\it support of $F$ with respect to $\gamma$}.

\item Given a subset $S$ of $\Reals^2$, let $\langle S \rangle$ denote its convex hull.

\item \label {3rfhbh8c98b82yh45ft6}
Given $F \in A$, we write $\Rec(F,A)$ for the set of ordered pairs $\gamma = (X,Y)$ satisfying 
$A = \bk[X,Y]$ and
$$
\text{there exist $m,n \ge 1$ such that 
$(m,n) \in \supp_\gamma(F) \subseteq \langle (0,0),(m,0),(0,n),(m,n) \rangle$.}
$$
Let $\Rec^+(F,A)$ be the set of $\gamma=(X,Y) \in \Rec(F,A)$ satisfying
the additional condition ``$m\le n$''.
Clearly, 
$$
\Rec^+(F,A) \neq \emptyset \Leftrightarrow \Rec(F,A) \neq \emptyset.
$$

\item By a {\it rectangular element of $A$} we mean an $F\in A$ satisfying $\Rec(F,A) \neq \emptyset$.

\end{enumerate}
\end{definition}

Some examples: no variable of $A=\kk2$ is rectangular;
the polynomial $X^2+Y^2$ is a rectangular element of $\Comp[X,Y]$ but not of $\Reals[X,Y]$.
See \ref{982393298urcnj092}, below,
to understand why the notion of rectangular element is relevant for studying field generators.

\begin{lemma}  \label {0cv9v8n8dllrklslk52kf0}
Let $F$ be a rectangular element of $A = \kk2$.
\begin{enumerata}

\item If $(X,Y) \in \Rec(F,A)$ then 
\begin{multline*}
\Rec(F,A) = \setspec{ (aX+b, cY+d) }{ a,b,c,d \in \bk,\ ac\neq0 } \\
\cup \setspec{ (cY+d, aX+b) }{ a,b,c,d \in \bk,\ ac\neq0 }.
\end{multline*}

\item \label {9834fh87td4rth}
Up to order, the pair $(m,n)$ in \ref{90ojhgdew33689jhhf}\eqref{3rfhbh8c98b82yh45ft6} depends only on $(F,A)$,
i.e., is independent of the choice of $\gamma \in \Rec(F,A)$.

\end{enumerata}
\end{lemma}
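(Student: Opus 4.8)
The plan is to prove (a) first and then deduce (b) as an immediate corollary. For (a), let $(X,Y) \in \Rec(F,A)$, so there exist $m,n \ge 1$ with $(m,n) \in \supp_{(X,Y)}(F) \subseteq \langle (0,0),(m,0),(0,n),(m,n)\rangle$. I will check the two inclusions of the claimed set equality separately. The inclusion ``$\supseteq$'' is the routine direction: if $(X',Y') = (aX+b,cY+d)$ with $ac \neq 0$, then a change of coordinates of this affine type sends $\supp_{(X,Y)}(F)$ to a subset of $\Nat^2$ with the same Newton polygon (the monomial $X^iY^j$ only acquires lower-order terms in the box), so $(m,n)$ remains the unique vertex of maximal coordinates and $(X',Y') \in \Rec(F,A)$; the case $(X',Y') = (cY+d, aX+b)$ is the same after reflecting the box across the diagonal, which merely swaps $m$ and $n$. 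The real content is ``$\subseteq$'': I must show that every $\gamma' = (X',Y') \in \Rec(F,A)$ arises from $\gamma=(X,Y)$ by such a transformation. Since $A = \bk[X,Y] = \bk[X',Y']$, the pair $(X',Y')$ is obtained from $(X,Y)$ by an element of $\Aut_\bk A = \AutA$; write $X' = P(X,Y)$, $Y' = Q(X,Y)$.

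The key step will be to extract degree information from the rectangularity of both $\gamma$ and $\gamma'$. Because $\supp_\gamma(F) \subseteq \langle (0,0),(m,0),(0,n),(m,n)\rangle$ with $(m,n)$ occurring, one reads off $\deg_X F = m$ and $\deg_Y F = n$ (viewing $F$ as a polynomial in $X$ over $\bk[Y]$ and vice versa); moreover the $X$-leading coefficient of $F$ lies in $\bk[Y]$ and has $Y$-degree exactly $n$, in fact it is a nonzero scalar times $Y^n$ plus lower terms — more precisely, the support condition forces the coefficient of $X^m$ in $F$ to be $\lambda Y^n$ for some $\lambda \in \bk^*$ (any monomial $X^m Y^j$ with $0 \le j < n$ would have to lie in the box, which it does, so this needs a slightly more careful look: the vertex $(m,n)$ being in the support together with the box constraint shows the coefficient of $X^mY^n$ is nonzero, but coefficients of $X^mY^j$ for $j<n$ can also be nonzero). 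The cleaner route is: the Newton polygon $\langle \supp_\gamma(F)\rangle$ has the horizontal edge from $(0,0)$ to $(m,0)$ and the vertical edge from $(0,0)$ to $(0,n)$ as faces, and $(m,n)$ is its unique vertex not on the coordinate axes region. I will argue that the two decompositions of $\AutA$ (via the Jung–van der Kulk theorem, or by directly comparing Newton polygons) force $P,Q$ to be affine in one of the two patterns claimed: an automorphism that is not of the allowed affine/flip type must involve a triangular piece $Y \mapsto Y, X \mapsto X + g(Y)$ with $\deg g \ge 2$ or a genuine higher-degree tame generator, and applying such a map to $F$ produces a Newton polygon that is not contained in any axis-aligned rectangle with a vertex in its support — contradicting $\gamma' \in \Rec(F,A)$. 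This comparison-of-Newton-polygons argument is the main obstacle, and I expect it to be the longest part; the fallback is to invoke \ref{0cv9v8n8dllrklslk52kf0} being essentially a statement about which tame generators preserve the class of rectangular supports, handled by a short induction on the length of a factorization into elementary automorphisms.

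Finally, part (b) follows at once from (a): if $\gamma,\gamma' \in \Rec(F,A)$ produce pairs $(m,n)$ and $(m',n')$ respectively, then by (a) the coordinates $\gamma'$ are obtained from $\gamma$ either by $(X,Y)\mapsto(aX+b,cY+d)$, which leaves the support and hence $(m,n)$ unchanged, or by $(X,Y)\mapsto(cY+d,aX+b)$, which reflects the support across the diagonal and hence replaces $(m,n)$ by $(n,m)$. In either case $\{m,n\} = \{m',n'\}$, so the unordered pair $(m,n)$ depends only on $(F,A)$, as asserted.
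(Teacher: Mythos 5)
Your overall architecture matches the paper's (the reverse inclusion is routine, the forward inclusion is the content, and (b) follows from (a)), but the forward inclusion — which you yourself identify as ``the main obstacle'' — is left as a plan rather than a proof, and the plan as stated has a real gap. You propose to factor the automorphism carrying $(X,Y)$ to $(X',Y')$ into elementary/affine pieces (Jung--van der Kulk) and to argue that any non-affine piece ``produces a Newton polygon that is not contained in any axis-aligned rectangle with a vertex in its support.'' The difficulty is that the hypothesis $\gamma,\gamma'\in\Rec(F,A)$ constrains only the two endpoint coordinate systems: the intermediate coordinate systems occurring in a factorization need not be in $\Rec(F,A)$, so you cannot conclude anything by showing that a \emph{single} elementary non-affine map destroys rectangularity and then inducting on the length of the factorization. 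To make that route work you would need the full leading-form analysis for reduced words in the amalgamated free product (showing no cancellation of top-degree terms along the whole composition), which is considerably more than the ``short induction'' you sketch, and none of it is actually carried out.

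The paper avoids all of this with a two-line degree computation that you should compare with your plan. Write $A=\bk[X,Y]=\bk[U,V]$ with $\gamma'=(U,V)\in\Rec(F,A)$, and suppose for contradiction that $\deg_X(U)>0$ and $\deg_X(V)>0$. Then $U,V$ are variables not lying in $\bk[Y]$, so their leading coefficients with respect to $X$ lie in $\bk^*$; since $\supp_{\gamma'}(F)$ lies in the box with top corner $(m',n')\in\supp_{\gamma'}(F)$, one gets $\deg_X(U^iV^j)\le\deg_X(U^{m'}V^{n'})$ with equality only at $(i,j)=(m',n')$, whence $\lco(F)\in\bk^*$ (leading coefficient in $X$). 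But rectangularity with respect to $\gamma=(X,Y)$ forces $\lco(F)$ to be a polynomial in $Y$ of degree $n\ge1$ — a contradiction. Hence $\min(\deg_X U,\deg_X V)=0$, and by symmetry $\min(\deg_Y U,\deg_Y V)=0$, which pins $(U,V)$ down to the two affine forms claimed. This argument needs no structure theorem for $\AutA$ and works over an arbitrary field. As it stands, your submission establishes the easy inclusion and the deduction of (b) from (a), but not the lemma.
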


\begin{proof}
Consider $\gamma = (X,Y)$, $\gamma' = (U,V) \in \Rec(F,A)$ and $m,n,m',n' \ge1$ such that
\begin{gather}
\label {7655u7fghbvkaks02}  (m,n) \in \supp_\gamma(F) \subseteq \langle (0,0),(m,0),(0,n),(m,n) \rangle \\
\label {3902owdjcnjvhbyghfj4we78}  (m',n') \in \supp_{\gamma'}(F) \subseteq \langle (0,0),(m',0),(0,n'),(m',n') \rangle .
\end{gather}
We have $A=\bk[U,V]=\bk[X,Y]=R[X]$ where $R = \bk[Y]$.
Given $G \in A\setminus\{0\}$, write $G = \sum_{i=0}^d G_i X^i$ ($G_i \in R$, $G_d \neq0$)
and define $\deg_X(G)=d$ and $\lco(G) = G_d$.

Let $\alpha = \deg_X(U)$,  $\beta = \deg_X(V)$.
We claim that $\min(\alpha,\beta)=0$. To see this, assume that $\alpha,\beta > 0$.
Since $U,V$ are then variables of $\bk[X,Y]$ not belonging to $\bk[Y]$, we have $\lco(U), \lco(V) \in \bk^*$.
For each $(i,j) \in \supp_{\gamma'}(F)$ we have $i\le m'$ and $j\le n'$ by \eqref{3902owdjcnjvhbyghfj4we78}, so
$$
\deg_X( U^iV^j ) = \alpha i + \beta j \leq \alpha m' + \beta n' = \deg_X( U^{m'}V^{n'} )
$$
where equality holds iff $(i,j)=(m',n')$; so $\lco(F) = \lambda \lco(U^{m'}V^{n'})$ for some $\lambda \in \bk^*$
and consequently $\lco(F) \in \bk^*$. However, \eqref{7655u7fghbvkaks02} implies that $\lco(F)$ is a polynomial
in $\bk[Y]$ of degree $n\ge1$, a contradiction.
This shows that $\min(\deg_X(U),\deg_X(V))=0$ and we obtain $\min(\deg_Y(U),\deg_Y(V))=0$ by symmetry.
So, in (a), the left hand side is included in the right hand side.
As the reverse inclusion is trivial, (a) is proved.  Assertion (b) follows from (a).
\end{proof}

\begin{definition}  \label {89798xchh0c09x9cww9}
For each rectangular element $F$ of $A = \kk2$ we define 
$$
\text{$\bideg_A(F) = (\deg_X(F), \deg_Y(F))$ for any $(X,Y) \in \Rec^+(F,A)$.}
$$
By \ref{0cv9v8n8dllrklslk52kf0}, $\bideg_A(F)$ is well defined and depends only on $(F,A)$.\\
Moreover, if $(m,n) = \bideg_A(F)$ and $\gamma \in \Rec^+(F,A)$ then
$$
\text{$1 \le m \le n$\ \ and\ \  $(m,n) \in \supp_\gamma(F) \subseteq \langle (0,0),(m,0),(0,n),(m,n) \rangle$.}
$$
\end{definition}

\begin{remark}
Let $F$ be a rectangular element of $A = \kk2$ and let $(m,n) = \bideg_A(F)$.
It follows from \ref{0cv9v8n8dllrklslk52kf0}\eqref{9834fh87td4rth} that if $m=n$ then
$\Rec^+(F,A) = \Rec(F,A)$.
\end{remark}

We shall now consider the set $\Galg(F,A)$ defined in the introduction.
We first show that  $\Galg(F,A)$ is easy to describe when $F$ is a rectangular element of~$A$.

\begin{lemma}  \label {09239023r02n02b27c2c8h2}
Let $F$ be a rectangular element of $A = \kk2$, $\gamma = (X,Y) \in \Rec(F,A)$ and
$(m,n) = ( \deg_X(F), \deg_Y(F) )$. Recall that
$$
(m,n) \in \supp_\gamma(F) \subseteq \langle (0,0),(m,0),(0,n),(m,n) \rangle.
$$
Write $F = \sum_{i,j} a_{ij} X^iY^j$ ($a_{ij} \in \bk$) and define
$$
\textstyle
F_\text{\rm ver}(Y) = \sum_{j=0}^n a_{m,j}Y^j \quad \text{and} \quad
F_\text{\rm hor}(X) = \sum_{i=0}^m a_{i,n}X^j .
$$
\begin{enumerata}

\item \label {09293bbgxqweiby7sznxadfefj}
$\Galg(F,A)$ is equal to
$$
\setspec{ (X-a) }{ a\in\bk \text{ and } \deg F(a,Y)=1 }
\cup \setspec{ (Y-b) }{ b\in\bk \text{ and } \deg F(X,b)=1 }.
$$

\item \label {09dsfjk3346gbxbcdzrew24} If $\min(m,n)>1$ then $\Galg(F,A)$ is included in
$$
\setspec{ (X-a) }{ a\in\bk \text{ and } F_\text{\rm hor}(a)=0 }
\cup \setspec{ (Y-b) }{ b\in\bk \text{ and } F_\text{\rm ver}(b)=0 }.
$$

\end{enumerata}
\end{lemma}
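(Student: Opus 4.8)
The plan is to prove (a) by a double inclusion and then to read off (b) from (a). First I would dispatch the inclusion ``$\supseteq$'' of (a) by a direct computation: for $a\in\bk$, the quotient $A/(X-a)$ is canonically $\bk[Y]$ (via $X\mapsto a$), and under this identification the composite $\bk[F]\hookrightarrow A\to A/(X-a)$ is the map $\bk[F]\to\bk[Y]$, $F\mapsto F(a,Y)$; this is an isomorphism exactly when $F(a,Y)$ has degree $1$ in $Y$. So $(X-a)\in\Galg(F,A)\iff\deg F(a,Y)=1$, and symmetrically $(Y-b)\in\Galg(F,A)\iff\deg F(X,b)=1$; in particular the right-hand set of (a) lies in $\Galg(F,A)$, and this same computation will be reused at the end of the argument for the other inclusion.

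For ``$\subseteq$'' in (a), take $\pgoth\in\Galg(F,A)$. Since $F$ is rectangular it is nonconstant, so $A/\pgoth\isom\bk[F]\isom\kk1$ has Krull dimension $1$; hence $\haut\pgoth=1$ and, $A$ being a UFD, $\pgoth=(g)$ with $g$ irreducible. I would then choose $t$ with $A/\pgoth=\bk[t]$, write $\phi(t),\psi(t)\in\bk[t]$ for the images of $X,Y$, and note that the image of $F$ is $F(\phi(t),\psi(t))$, with $\bk[F(\phi(t),\psi(t))]=\bk[t]$ forcing $\deg_t F(\phi(t),\psi(t))=1$. The crux is to show $\deg_t\phi=0$ or $\deg_t\psi=0$: if both were $\ge1$, then, because $\supp_\gamma(F)$ lies in the rectangle $\langle(0,0),(m,0),(0,n),(m,n)\rangle$, the $t$-degree $i\deg_t\phi+j\deg_t\psi$ of $\phi^i\psi^j$ would be maximal over $\supp_\gamma(F)$ only at $(i,j)=(m,n)$; since $a_{mn}\neq0$ the leading $t$-term of $F(\phi(t),\psi(t))$ would survive and give $\deg_t F(\phi(t),\psi(t))=m\deg_t\phi+n\deg_t\psi\ge m+n\ge2$, a contradiction. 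Once, say, $\deg_t\phi=0$, then $\phi$ is a constant $a\in\bk$, so $X-a\in\pgoth$ and hence $\pgoth=(X-a)$ (as $X-a$ is irreducible); the ``$\supseteq$'' computation then forces $\deg F(a,Y)=1$. The symmetric case gives $(Y-b)$ with $\deg F(X,b)=1$, so (a) is proved.

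For (b), assuming $\min(m,n)>1$, I would combine (a) with one observation: $F_{\text{\rm hor}}$ is nothing but the coefficient of $Y^n$ in $F$ (viewed as a polynomial in $Y$ over $\bk[X]$), so $F_{\text{\rm hor}}(a)$ is the coefficient of $Y^n$ in $F(a,Y)$; if $(X-a)\in\Galg(F,A)$ then by (a) $\deg F(a,Y)=1$, and since $n\ge2$ that coefficient vanishes, i.e.\ $F_{\text{\rm hor}}(a)=0$. Symmetrically, $F_{\text{\rm ver}}(b)$ is the coefficient of $X^m$ in $F(X,b)$, which is $0$ when $(Y-b)\in\Galg(F,A)$ because $m\ge2$. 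This gives the stated inclusion. I expect the only real obstacle to be the degree estimate in ``$\subseteq$'' of (a): everything else is bookkeeping, but that estimate is where the rectangular-support hypothesis is genuinely used, to prevent cancellation in the leading $t$-term and thereby force one coordinate to become constant modulo $\pgoth$.
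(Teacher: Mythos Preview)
Your proof is correct and follows essentially the same approach as the paper: parametrize $A/\pgoth$ by $\bk[t]$, use the rectangular support and $(m,n)\in\supp_\gamma(F)$ to see that the unique maximal $t$-degree term in $F(\phi(t),\psi(t))$ cannot cancel, deduce $\deg_t\phi=0$ or $\deg_t\psi=0$, and then read off (b) from (a) by looking at top coefficients. Your write-up is slightly more explicit (e.g.\ in justifying $\pgoth=(X-a)$ and in the non-cancellation step), but the argument is the same.
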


\begin{proof}
Let $\pgoth \in \Galg(F,A)$.  As $A/\pgoth = \kk1$, we may consider a surjective
$\bk$-homomorphism $\pi : A \to \bk[t]=\kk1$ such that $\ker\pi = \pgoth$.
Let $x(t) = \pi(X)$ and $y(t) = \pi(Y)$.
Since the composite $\bk[F] \hookrightarrow A \xrightarrow{\pi} \bk[t]$ is an isomorphism
and $\pi(F) = F(x(t),y(t))$, we have $\deg_t  F(x(t),y(t)) = 1$.
If $x(t), y(t) \notin \bk$ then
$$
1 = \deg_t  F(x(t),y(t)) = m \deg_t x(t) + n \deg_t y(t) \ge m+n \ge 2,
$$
which is absurd; so $x(t) \in \bk$ or $y(t) \in \bk$, from which we obtain
$\pgoth = (Z-\lambda)$ for some $Z \in \{X,Y\}$ and $\lambda \in \bk$.
It is easily verified that $(X-\lambda) \in \Galg(F,A)$ $\Leftrightarrow$ $\deg F(\lambda,Y) = 1$
and that $(Y-\lambda) \in \Galg(F,A)$ $\Leftrightarrow$ $\deg F(X,\lambda) = 1$,
so (a) is proved.

Assume that $\min(m,n)>1$ and consider $(Z-\lambda) \in \Galg(F,A)$, 
where $Z \in \{X,Y\}$ and $\lambda \in \bk$.
If $Z=X$ then $\deg F(\lambda,Y)=1$; as $n>1$, it follows that $F_\text{\rm hor}(\lambda)=0$. 
Similarly, if $Z=Y$ then $F_\text{\rm ver}(\lambda)=0$. 
This proves (b).
\end{proof}

Note the following obvious consequence of \ref{09239023r02n02b27c2c8h2}:

\begin{corollary}  \label {0f29382c398y1hdq} 
Let $F$ be a rectangular element of $A = \kk2$.
Then each element of $\Galg(F,A)$ is of the form $(G)$ for some variable $G$ of $A$,
and each element of $\Gamma(F,A)$ is a coordinate line in $\aff^2 = \Spec A$.
\end{corollary}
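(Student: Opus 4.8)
The plan is to read Corollary \ref{0f29382c398y1hdq} directly off of Lemma \ref{09239023r02n02b27c2c8h2}(a) by observing that every prime ideal listed there is generated by a variable. So let $F$ be a rectangular element of $A=\kk2$, pick $\gamma=(X,Y)\in\Rec(F,A)$, and recall from \ref{09239023r02n02b27c2c8h2}\eqref{09293bbgxqweiby7sznxadfefj} that
$$
\Galg(F,A) = \setspec{ (X-a) }{ a\in\bk,\ \deg F(a,Y)=1 }
\cup \setspec{ (Y-b) }{ b\in\bk,\ \deg F(X,b)=1 }.
$$
The first point is simply that for any $a\in\bk$ we have $A=\bk[X-a,Y]$, so $X-a$ is a variable of $A$; likewise $Y-b$ is a variable of $A$ for any $b\in\bk$. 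Hence every element of $\Galg(F,A)$ has the form $(G)$ with $G$ a variable, which is the first assertion.

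For the second assertion, recall that $\pgoth\mapsto V(\pgoth)$ is a bijection $\Galg(F,A)\to\Gamma(F,A)$, so an arbitrary element of $\Gamma(F,A)$ is $V(G)$ for a variable $G$ of $A$ of the form $X-a$ or $Y-b$. By the definition of a coordinate line, $V(G)$ is a coordinate line in $\aff^2=\Spec A$ precisely because $G$ is a variable. This finishes the proof.

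There is essentially no obstacle here: the content is entirely contained in \ref{09239023r02n02b27c2c8h2}(a), and the only thing to add is the trivial remark that translating a coordinate changes neither the property of being a variable nor the generated polynomial ring. I would keep the proof to two or three sentences, perhaps pointing explicitly to part (a) of the lemma and to the bijection $\Galg(F,A)\to\Gamma(F,A)$ recalled in the introduction's notation paragraph, so the reader sees why the statement about $\Gamma(F,A)$ follows from the statement about $\Galg(F,A)$.
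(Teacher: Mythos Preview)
Your proof is correct and is exactly the argument the paper intends: the corollary is introduced in the paper simply as ``the following obvious consequence of \ref{09239023r02n02b27c2c8h2}'' with no further proof, and your write-up just makes explicit the triviality that $X-a$ and $Y-b$ are variables of $A$ and then invokes the bijection $\Galg(F,A)\to\Gamma(F,A)$.
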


\begin{remark}
If $G$ is any element of $A=\kk2$ such that $A/(G) = \kk1$
(note that $G$ is not necessarily a variable of $A$ if $\Char\bk>0$, see \ref{f02939d9wid29}) 
then there exists $F \in A \setminus \bk$ such that $(G) \in \Galg(F,A)$
(proof: write $A/(G) = \bk[t]$ and choose $F\in A$ whose image via $A \to A/(G)$ is $t$).
So \ref{0f29382c398y1hdq} states a nontrivial property of rectangular polynomials.
\end{remark}

We shall now ask how $\Gamma(F,A)$ behaves with respect to two operations:
extending the base field (\ref{653b34hj24h75f7hk676g})
and birationally extending the ring $A$ (\ref{56fewf8r8t34kd223lwgfuio}).

\begin{lemma}  \label  {653b34hj24h75f7hk676g}
Let $K/\bk$ be a field extension,
$A = \kk2$ and $\bar A = K \otimes_\bk A = K^{[2]}$.
Consider $F \in A \subseteq \bar A$ such that $F \notin \bk$. 
Then
\begin{equation}   \label {oiy5ksop8hri3hct}
\Galg(F,A) \to \Galg(F,\bar A),\quad \pgoth \mapsto \pgoth\bar A
\end{equation}
is a well defined injective map.
In particular, $| \Galg(F,A) | \leq | \Galg(F,\bar A) |$.
\end{lemma}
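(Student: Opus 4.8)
The plan is to use the fact that $\bar A = K \otimes_\bk A$ is obtained from $A$ by the faithfully flat base change $\bk \hookrightarrow K$, so that the functor $K \otimes_\bk (-)$ is exact and carries the defining isomorphism of an element of $\Galg(F,A)$ to the one required for an element of $\Galg(F,\bar A)$, while ideal contraction along $A \hookrightarrow \bar A$ is the identity.

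\textbf{Well-definedness.} Fix $\pgoth \in \Galg(F,A)$, so the composite $\bk[F] \hookrightarrow A \to A/\pgoth$ is an isomorphism of $\bk$-algebras. Applying $K \otimes_\bk (-)$ and using the identifications $K \otimes_\bk A = \bar A$, $K \otimes_\bk \bk[F] = K[F]$ and $K \otimes_\bk (A/\pgoth) = \bar A/\pgoth\bar A$ (the last because $K$ is flat over $\bk$), I obtain that the composite $K[F] \hookrightarrow \bar A \to \bar A/\pgoth\bar A$ is an isomorphism of $K$-algebras: tensoring an isomorphism of $\bk$-modules with $K$ gives an isomorphism, and the formation of all three maps in the composite commutes with base change. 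In particular $\bar A/\pgoth\bar A \isom K[F] = K^{[1]}$ is a domain, so $\pgoth\bar A$ is a prime ideal of $\bar A$, and the displayed isomorphism says precisely that $\pgoth\bar A \in \Galg(F,\bar A)$. Hence \eqref{oiy5ksop8hri3hct} is well defined.

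\textbf{Injectivity.} Since $K$ is a free (hence faithfully flat) $\bk$-module, for every ideal $\pgoth$ of $A$ the natural map $A/\pgoth \to K \otimes_\bk (A/\pgoth) = \bar A/\pgoth\bar A$ is injective; equivalently, the preimage of $\pgoth\bar A$ under $A \hookrightarrow \bar A$ is $\pgoth$ itself, i.e. $\pgoth\bar A \cap A = \pgoth$. Therefore, if $\pgoth, \qgoth \in \Galg(F,A)$ satisfy $\pgoth\bar A = \qgoth\bar A$, then $\pgoth = \pgoth\bar A \cap A = \qgoth\bar A \cap A = \qgoth$. This gives injectivity, and the stated inequality $|\Galg(F,A)| \le |\Galg(F,\bar A)|$ is then immediate.

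I do not expect any real obstacle: the whole argument is a routine application of flat base change. The one point that must be handled with a little care is that $K \otimes_\bk (-)$ should be seen not merely to preserve the isomorphism type of $A/\pgoth$, but to send the specific composite $\bk[F] \to A/\pgoth$ to the specific composite $K[F] \to \bar A/\pgoth\bar A$ — this is what places $\pgoth\bar A$ inside $\Galg(F,\bar A)$ rather than just showing it is prime.
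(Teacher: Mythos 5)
Your proof is correct and follows essentially the same route as the paper's: apply the functor $K\otimes_\bk(\underline{\ \ })$ to the composite $\bk[F]\to A\to A/\pgoth$ to get well-definedness, and use faithful flatness (so $I\bar A\cap A=I$) for injectivity. The only cosmetic difference is that the paper first writes $\pgoth=GA$ as a principal ideal before tensoring, whereas you work with $\pgoth$ directly via right-exactness of the tensor product; both are fine.
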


\begin{proof}
Consider $\pgoth \in \Galg(F,A)$ and write $\pgoth=GA$ where $G \in A$.
Then the composite $\bk[F] \to A \to A/GA$ is an isomorphism $\bk[F] \to A/GA$.
Applying the functor \mbox{$K \otimes_\bk\underline{\ \ }$} to $\bk[F] \to A \to A/GA$ yields
$K[F] \to \bar A \to \bar A/G\bar A$ where $K[F] \to \bar A/G\bar A$ is an isomorphism,
so $\pgoth\bar A = G\bar A$ is an element of $\Galg(F,\bar A)$ and
\eqref{oiy5ksop8hri3hct} is well defined.
Since $A \to \bar A$ is a faithfully flat homomorphism, we have $A \cap I \bar A = I$
for every ideal $I$ of $A$ (cf.\ \cite[(4.C)(ii), p.\ 27]{Matsumura}),
so \eqref{oiy5ksop8hri3hct} is injective.
\end{proof}

\begin{lemma} \label {56fewf8r8t34kd223lwgfuio}
Consider $F \in A \preceq A'$, where $F \notin\bk$.
Let $\Phi : \Spec A' \to \Spec A$ be the morphism determined by $A \hookrightarrow A'$.
\begin{enumerata}

\item For each $C' \in \Gamma(F,A')$, $\Phi(C')$ is a curve in $\Spec A$ and an element of $\Gamma(F,A)$.

\item The set map $\gamma : \Gamma(F,A') \to \Gamma(F,A)$, $C' \mapsto \Phi(C')$, is injective.

\item \label {324jhs7d5ghr2bv2npo24nc}
Let $C \in \Gamma(F,A)$. 
\begin{enumerata}

\item If $C' \subset \Spec A'$ is a curve such that $\Phi(C')=C$ and such that $\Phi|_{C'} : C' \to C$ is an isomorphism,
then $C' \in \Gamma(F,A')$ and $\gamma(C')=C$.

\item $C \in \image\gamma$ if and only if there exists
a curve $C' \subset \Spec A'$ such that $\Phi(C')=C$ and such that $\Phi|_{C'} : C' \to C$ is an isomorphism.

\end{enumerata}
\end{enumerata}
\end{lemma}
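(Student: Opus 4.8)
The plan is to isolate one structural fact and derive the whole lemma from it. Write $f : \Spec A \to \Spec\bk[F]$ and $f' : \Spec A' \to \Spec\bk[F]$ for the morphisms determined by $\bk[F] \hookrightarrow A$ and $\bk[F] \hookrightarrow A'$, so that $f' = f \circ \Phi$. The key observation is:
\emph{if $C' \in \Gamma(F,A')$, then $\Phi|_{C'} : C' \to \Spec A$ is a closed immersion.}
Indeed, $f'|_{C'} = f \circ (\Phi|_{C'})$, and $f'|_{C'} : C' \to \Spec\bk[F]$ is an isomorphism by definition of $\Gamma(F,A')$; since $f$ is an affine morphism it is separated, so by the cancellation property of closed immersions (if $h\circ g$ is a closed immersion and $h$ is separated, then $g$ is one) we conclude that $\Phi|_{C'}$ is a closed immersion. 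Note also that $F$ is transcendental over $\bk$ (as $\bk$ is algebraically closed in $\bk(X,Y)$), so $\Spec\bk[F]$, and hence every element of $\Gamma(F,A)$ and $\Gamma(F,A')$, is a one-dimensional integral scheme.

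Granting this, I would prove (a) first. Given $C' \in \Gamma(F,A')$, the image $\Phi(C')$ is a closed subscheme of $\Spec A$ isomorphic (via $\Phi|_{C'}$) to $C'$, hence an integral closed subscheme of dimension $1$, i.e.\ a curve in $\Spec A$. Moreover $f|_{\Phi(C')}$ equals $f'|_{C'}$ precomposed with the inverse of the isomorphism $\Phi|_{C'} : C' \to \Phi(C')$, hence is an isomorphism; so $\Phi(C') \in \Gamma(F,A)$. This simultaneously yields the nontrivial direction of (c)(ii): if $C \in \Gamma(F,A)$ lies in $\image\gamma$, say $C = \Phi(C')$ with $C' \in \Gamma(F,A')$, then $\Phi|_{C'} : C' \to C$ is an isomorphism and $\Phi(C') = C$. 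The reverse implication of (c)(ii) is exactly (c)(i), which I would prove directly: if $C' \subset \Spec A'$ is a curve with $\Phi(C') = C$ and $\Phi|_{C'} : C' \to C$ an isomorphism, and $C \in \Gamma(F,A)$, then $f'|_{C'} = (f|_C)\circ(\Phi|_{C'})$ is a composite of isomorphisms, so $C' \in \Gamma(F,A')$, and then $\gamma(C') = \Phi(C') = C$ by construction of $\gamma$.

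For (b) I would invoke Lemma~\ref{0923092cn3r98z29eu}(b). Suppose $C_1', C_2' \in \Gamma(F,A')$ with $\gamma(C_1') = \gamma(C_2') =: C$. Since $\Phi(C_i') = C$ is a curve, $C \cap \Phi(\Spec A')$ is infinite, so $C$ is not a missing curve of $\Phi$; by Lemma~\ref{0923092cn3r98z29eu}(b) there is a \emph{unique} curve $D \subset \Spec A'$ with $\Phi(D)$ dense in $C$. As $C_1'$ and $C_2'$ are both curves whose image equals $C$, uniqueness forces $C_1' = D = C_2'$, which is the asserted injectivity.

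The only genuinely substantive external input is Lemma~\ref{0923092cn3r98z29eu}(b), needed in part (b); parts (a) and (c) are formal once the key observation is in hand. The point that requires care is precisely that observation — showing $\Phi|_{C'}$ is a \emph{closed} immersion and not merely injective or locally closed — and the separatedness of $f$ is exactly what makes the argument go through. I expect this to be the main (mild) obstacle; an alternative route to (a) via the contracted prime $\pgoth = A \cap \pgoth'$, ruling out $\haut\pgoth = 2$ by means of Lemma~\ref{0923092cn3r98z29eu}(a) in the style of the proof of~\ref{difja;skdfj;aksd}(b), would also work but is more laborious, so I would favor the closed-immersion argument.
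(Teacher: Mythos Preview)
Your proof is correct and follows the same overall architecture as the paper's: part (b) is handled identically via Lemma~\ref{0923092cn3r98z29eu}\eqref{0as9a0f9s9s3487jhh3}, and the paper leaves (c) to the reader, so your explicit verification simply fills in what the authors omit.

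The one genuine difference is in part (a). You isolate the abstract fact that $\Phi|_{C'}$ is a closed immersion, via the cancellation property ($f\circ\Phi|_{C'}$ closed immersion, $f$ separated $\Rightarrow$ $\Phi|_{C'}$ closed immersion). The paper instead takes the closure $C$ of $\Phi(C')$ and argues directly on coordinate rings: the composite $\bk[F]\hookrightarrow\bk[C]\hookrightarrow\bk[C']$ is an isomorphism, so both arrows are isomorphisms, whence $C$ is a curve, $\Phi(C')=C$, and $C\in\Gamma(F,A)$. The paper's route is slightly more elementary (no appeal to a general scheme-theoretic lemma, just the trivial observation that an injection followed by an injection whose composite is bijective forces both to be bijections), while yours packages the same content more geometrically and makes the closedness of $\Phi(C')$ explicit from the outset. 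Either way the argument is formal; your remark that this is the only place requiring care is accurate, and your proposed alternative via contracted primes would indeed also work but is unnecessary.
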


\begin{proof}
Consider an element $C'$ of $\Gamma(F,A')$ and let $C$ be the closure of $\Phi(C')$ in $\Spec A$.
As $C' \in \Gamma(F,A')$, 
we have morphisms $C' \to C \to \Spec\bk[F]$ whose composition is an isomorphism 
(in particular, $C$ is a curve).
Considering the coordinate rings $\bk[C]$ and $\bk[C']$ of $C$ and $C'$,
we have injective homomorphisms $\bk[F] \to \bk[C] \to \bk[C']$
such that the composition  $\bk[F] \to \bk[C']$ is an isomorphism; thus $\bk[F] = \bk[C] = \bk[C']$ and
hence $\Phi|_{C'} : C' \to C$ and $f|_{C} : C \to \Spec\bk[F]$ are two isomorphisms.
This implies that $\Phi(C')=C$ and $C \in \Gamma(F,A)$, which proves (a).
Assertion (b) follows from \ref{0923092cn3r98z29eu}\eqref{0as9a0f9s9s3487jhh3} and the
straightforward verification of (c) is left to the reader.
\end{proof}

\section{The cardinality of $\Gamma(F,A)$ for field generators}
\label {SEC:CardGammaFG}

The aim of this section is to prove Theorem~\ref{9854dnc2mrhvfc}, which asserts that $| \Gamma(F,A) | \le 2$
whenever $F$ is a field generator of $A=\kk2$ which is not of the form $\alpha(Y)X + \beta(Y)$.
In the course of proving that result, we obtain Theorem~\ref{8923gdtswA2y87oa9fhcbnccv},
which gives new information on ``small'' field generators.

\medskip
We shall make essential use of two results of Russell on field generators.
The first one (\ref{982393298urcnj092}) is valid over an arbitrary field $\bk$:

\begin{theorem}[{\cite[3.7 and 4.5]{Rus:fg}}]  \label {982393298urcnj092}
If $F$ is a field generator of $A=\kk2$ which is not a variable of $A$, then $F$ is a rectangular element of $A$.
\end{theorem}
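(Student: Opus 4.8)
The key reformulation is that, for any coordinate system $\gamma=(X,Y)$ with $A=\bk[X,Y]$, the support $\supp_\gamma(F)$ automatically lies in the rectangle $[0,\deg_XF]\times[0,\deg_YF]$; hence $F$ is rectangular with respect to $\gamma$ if and only if $\deg_XF\ge1$, $\deg_YF\ge1$ and the coefficient of $X^{\deg_XF}Y^{\deg_YF}$ in $F$ is nonzero. So the whole problem is: given that $F$ is a field generator and not a variable, produce coordinates in which the Newton polygon of $F$ has its ``northeast corner'' occupied. My plan is to reach such coordinates by an iterated coordinate change, controlled by the behaviour of $F$ at infinity.

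To see what the field-generator hypothesis contributes, embed $\aff^2\hookrightarrow\proj^2$ as in \ref{939ri09109e2xj9cru} for some $\gamma$, with line at infinity $\ell$. The rational map $\proj^2\dashrightarrow\proj^1$ extending $f$ has all of its points of indeterminacy on $\ell$; let $\sigma : V\to\proj^2$ be the shortest sequence of point blow-ups after which it becomes a morphism $\bar f : V\to\proj^1$, and let $D=(\sigma^{-1}(\ell))_{\mathrm{red}}$, a tree of nonsingular rational curves with normal crossings, the dicriticals of $(F,A)$ being the components of $D$ that dominate $\proj^1$ (cf.\ \ref{difqwkae.dmnkdfuuuu6}). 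That $F$ is a field generator says precisely that the generic fibre of $\bar f$ is $\proj^1$ over $\bk(F)$; in particular the dicritical degrees have $\gcd$ equal to $1$ by Theorem~\ref{89329d675fd43q}. These are the facts I would feed into the descent.

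For the descent: if $F$ is not rectangular with respect to the current $\gamma$ then $\deg_XF,\deg_YF\ge1$ (otherwise $F$ would be a variable), and the Newton polygon of $F$ has an outer edge $e$ on its northeast boundary of strictly negative rational slope $-q/p$ with $\gcd(p,q)=1$; the sum of the terms of $F$ whose exponents lie on $e$ equals, up to a monomial factor, a binary form $H(X^p,Y^q)$ of some positive degree in which both pure powers occur. My plan is to prove that being a field generator forces $H$ to be a power of a \emph{single} linear form with coefficients in $\bk$; granting this, a coordinate change of the form $Y\mapsto Y+h(X)$ — realized, when $p>1$, as a finite composite of integral ones governed by the continued fraction expansion of $q/p$ — annihilates the edge form of $F$ along $e$ and strictly decreases a suitable degree invariant of $(F,\gamma)$. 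Since neither ``$F$ is a field generator'' nor ``$F$ is a variable'' is affected by a coordinate change, iterating this must terminate with $F$ rectangular (it cannot stop at a variable, since $F$ is not one).

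The main obstacle is exactly the claim that the edge form $H$ must be a power of a single $\bk$-linear form. Proving it is where the full strength of the field-generator hypothesis has to be used — through the rationality of the generic fibre together with the count and degrees of the places at infinity read off from the tree $D$ — and it is also the reason one cannot first pass to an algebraic closure of $\bk$: rectangularity is not stable under base extension, as $X^2+Y^2$ is rectangular over $\Comp$ but neither rectangular nor a field generator over $\Reals$. Carrying this out rigorously amounts to the case-by-case analysis of the dual graph of $D$ that forms the core of \cite[3.7 and 4.5]{Rus:fg}, and I would expect the write-up to be dominated by that classification.
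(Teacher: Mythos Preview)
This theorem is not proved in the paper; it is quoted from Russell \cite[3.7 and 4.5]{Rus:fg} and used as a black box, so there is no ``paper's own proof'' to compare your proposal against.

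Regarding the proposal on its own terms: it is a strategy outline rather than a proof, and you say so yourself --- the crucial step (that the outer-edge form of the Newton polygon is a perfect power of a single $\bk$-linear form) is precisely what you hand back to \cite[3.7 and 4.5]{Rus:fg}, so the argument as written is circular. One further technical point to tighten: when the outer edge has slope $-q/p$ with $p>1$, the substitution you want is not a $\bk[X,Y]$-automorphism, and ``realizing it as a finite composite of integral ones governed by the continued fraction of $q/p$'' is not obviously possible while staying inside $\Aut_\bk A$; you would need to specify exactly which elementary automorphisms you compose and why your chosen degree invariant strictly decreases at each step. Absent that, the descent does not get off the ground.
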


In view of \ref{982393298urcnj092} and of the objective of this section,
it is interesting to note that there is no upper bound on the cardinality of $\Gamma(F,A)$
for rectangular elements $F$ of $A=\kk2$:

\begin{examples}  \label {8923ry7fhuh}
\begin{enumerata}

\item Let $F = u(Y)X^2 + X \in A = \bk[X,Y]$ where $\deg u(Y) > 1$;
then \ref{09239023r02n02b27c2c8h2} implies that $|\Gamma(F,A)|$ equals the number of roots of $u(Y)$.

\item \label {923hvmnbwkuwq5}
Assume that $\bk$ is infinite and let $F = \alpha(Y)X + \beta(Y) \in A = \bk[X,Y]$
where $\deg\beta(Y) \le \deg \alpha(Y) > 0$;
then \ref{09239023r02n02b27c2c8h2} implies that $|\Gamma(F,A)| = |\bk|$.
Note that $F$ is a good field generator of $A$ since $\bk(F,Y)=\bk(X,Y)$.

\end{enumerata}
\end{examples}

The other result of Russell that we need is  \cite[1.6]{Rus:fg2}.
Because there seems to be an ambiguity in the statement of that result, let us explain the following.
(The statement of that result is too long to be reproduced here.)
Suppose that $F(X,Y) \in \bk[X,Y]$ satisfies all hypotheses of that theorem (in particular $\mu_1 \le \mu_2$).
If $\mu_1 < \mu_2$ then the statement is clear,
and all conclusions of \cite[1.6]{Rus:fg2} are true for $F(X,Y)$.
If $\mu_1=\mu_2$ then both $F(X,Y)$ and $F(Y,X)$ satisfy the hypotheses of the theorem,
but the proof only shows that the theorem is true {\it for at least one of these polynomials}.
(In the proof, just after (5) on page 320 of \cite{Rus:fg2}, one reads 
``It follows that $h_1=1$ or $l_1=1$. Say $h_1=1$''.
The intended meaning of that sentence is: ``Replacing $F(X,Y)$ by $F(Y,X)$ if necessary, we may assume
that $h_1=1$''.)
There are indeed examples with $\mu_1=\mu_2$ where the theorem is false for $F(X,Y)$ and true for $F(Y,X)$.
We shall use  \cite[1.6]{Rus:fg2} in the proofs of 
\ref{pf900293012dj0mce19}\eqref{0c987w53xhgr3jrqh} and \ref{8923gdtswA2y87oa9fhcbnccv};
the ambiguity arises in the first proof only.

\begin{definition}
Let $F \in A = \kk2$ and let $\gamma = (X,Y)$ be such that $A = \bk[X,Y]$.
We say that $F$ is {\it $\gamma$-small\/} in $A$ if the following conditions are satisfied:
\begin{itemize}

\item $F \notin \bk[X,u(X)Y]$ for all $u(X) \in \bk[X]\setminus \bk$;
\item $F \notin \bk[Xv(Y),Y]$ for all $v(Y) \in \bk[Y]\setminus \bk$.

\end{itemize}
\end{definition}

\begin{remark} \label {3wfhjjos3dhjjfio}
It follows from \ref{0cv9v8n8dllrklslk52kf0} that,
for a rectangular element $F$ of $A = \kk2$, the following are equivalent:
\begin{itemize}

\item $F$ is $\gamma$-small in $A$ for at least one $\gamma \in \Rec(F,A)$;
\item $F$ is $\gamma$-small in $A$ for all $\gamma \in \Rec(F,A)$.

\end{itemize}
\end{remark}

\begin{definition}
By a {\it small\/} field generator of $A = \kk2$, we mean a field generator $F$ of $A$ 
for which there exists $\gamma\in \Rec(F,A)$ such that $F$ is $\gamma$-small in $A$.
(Then, by \ref{3wfhjjos3dhjjfio}, $F$ is $\gamma$-small in $A$ for every $\gamma\in \Rec(F,A)$.)
\end{definition}

\begin{lemma}  \label {pf900293012dj0mce19} 
If $F$ is a small field generator of $A = \kk2$ then the following hold.
\begin{enumerata}

\item \label {27364dtrfg}
$\Rec(F,A) \neq \emptyset$ and, for all $\gamma\in \Rec(F,A)$, $F$ is $\gamma$-small in $A$.

\item $F$ is not a variable of $A$.

\item \label {712tgxuydc82}
The pair $(m,n) = \bideg_A(F)$ (cf.\ \ref{89798xchh0c09x9cww9}) satisfies $2 \le m \le n$.

\item \label {0c987w53xhgr3jrqh} 
If $\bk$ is algebraically closed then $m<n$.

\end{enumerata}
\end{lemma}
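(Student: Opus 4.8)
The plan is to settle (a)--(c) by direct inspection of the Newton polygon of $F$ and to reduce (d) to Russell's structure theorem \cite[1.6]{Rus:fg2}. For (a): by definition of ``small field generator'' there is at least one $\gamma\in\Rec(F,A)$ for which $F$ is $\gamma$-small in $A$; in particular $\Rec(F,A)\neq\emptyset$, so $F$ is a rectangular element of $A$, and then \ref{3wfhjjos3dhjjfio} promotes this to the assertion that $F$ is $\gamma$-small in $A$ for \emph{every} $\gamma\in\Rec(F,A)$. For (b): $F$ is rectangular by what we have just observed, and no variable of $A$ is rectangular (as noted right after \ref{90ojhgdew33689jhhf}), so $F$ is not a variable of $A$.

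For (c), fix $\gamma=(X,Y)\in\Rec^+(F,A)$ and put $(m,n)=\bideg_A(F)$; by \ref{89798xchh0c09x9cww9} we have $1\le m\le n$ and $(m,n)\in\supp_\gamma(F)\subseteq\langle(0,0),(m,0),(0,n),(m,n)\rangle$, so only the possibility $m=1$ has to be excluded. If $m=1$ then $F=\alpha(Y)+X\,\beta(Y)$ with $\alpha,\beta\in\bk[Y]$ of $Y$-degree at most $n$, and the condition $(1,n)\in\supp_\gamma(F)$ forces $\deg\beta=n\ge1$, so $\beta\in\bk[Y]\setminus\bk$. Then $F=X\,\beta(Y)+\alpha(Y)$ lies in $\bk[X\beta(Y),Y]$, contradicting the $\gamma$-smallness of $F$ established in (a). Hence $m\ge2$, and together with $m\le n$ this gives $2\le m\le n$.

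For (d), assume $\bk$ is algebraically closed and suppose, toward a contradiction, that $m=n$. By the remark following \ref{89798xchh0c09x9cww9} we then have $\Rec^+(F,A)=\Rec(F,A)$, so both $\gamma=(X,Y)$ and $\gamma'=(Y,X)$ belong to $\Rec(F,A)$ and, by (a), $F$ is $\gamma$-small and $\gamma'$-small in $A$. By (a)--(c) the polynomial $F$ satisfies the hypotheses of \cite[1.6]{Rus:fg2} with $\mu_1=\mu_2=m$; invoking the resolution of the $\mu_1=\mu_2$ ambiguity discussed before this lemma, that theorem holds for at least one of $F(X,Y)$, $F(Y,X)$, and its conclusion shows that the corresponding polynomial lies in $\bk[X,u(X)Y]$ for some nonconstant $u(X)$ or in $\bk[Xv(Y),Y]$ for some nonconstant $v(Y)$ --- which is precisely what $\gamma$-smallness (resp. $\gamma'$-smallness) forbids. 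This contradiction yields $m<n$. I expect this last step to be the main obstacle: it hinges on a precise reading of \cite[1.6]{Rus:fg2} and on the observation that the $X\leftrightarrow Y$ swap used to sidestep the ambiguity in \cite[1.6]{Rus:fg2} is harmless here only because the equality $m=n$ keeps it inside $\Rec(F,A)$, so that the smallness hypothesis supplied by (a) survives the swap.
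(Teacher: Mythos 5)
Your proposal is correct and follows essentially the same route as the paper: (a)--(b) via \ref{3wfhjjos3dhjjfio} and the non-rectangularity of variables, (c) by ruling out $m=1$ through the membership $F\in\bk[X\beta(Y),Y]$, and (d) by invoking \cite[1.6]{Rus:fg2} for at least one of $F(X,Y)$, $F(Y,X)$ and contradicting smallness. Your closing remark that the $X\leftrightarrow Y$ swap stays inside $\Rec(F,A)$ precisely because $m=n$ is the right justification; the paper phrases the same point by translating the conclusion for $F(Y,X)$ back into the second clause of $\gamma$-smallness for the original $\gamma$.
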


\begin{proof}
(a) $\Rec(F,A) \neq \emptyset$ is clear and the other assertion is \ref{3wfhjjos3dhjjfio}.
Since no variable of $A$ is rectangular, (b) is true.
(c) Pick $\gamma = (X,Y) \in \Rec^+(F,A)$; then $F$ is $\gamma$-small and $(m,n) = ( \deg_X(F), \deg_Y(F) )$.
We have $1 \le m \le n$ by definition.
If $m=1$ then $F = a(Y)X+b(Y)$ for some $a(Y), b(Y) \in \bk[Y]$, $a(Y) \notin \bk$;
then $F \in \bk[a(Y)X,Y]$, which contradicts the fact that $F$ is $\gamma$-small.

(d) Proceeding by contradiction, assume that $m=n$.
Pick $\gamma=(X,Y) \in \Rec^+(F,A)$, note that $A=\bk[X,Y]$ and view $F\in A$ as a polynomial in $X,Y$.
Then $F(X,Y) \in \bk[X,Y]$ satisfies the hypothesis of \cite[1.6]{Rus:fg2}
(with $\mu_1=m$ and $\mu_2=n$, so $\mu_1=\mu_2$), and so does $F(Y,X)$.
By the discussion just after \ref{8923ry7fhuh}, there exists $G(X,Y) \in \{ F(X,Y), F(Y,X) \}$ such that
the conclusions of \cite[1.6]{Rus:fg2} are true for $G(X,Y)$.
So, by assertion (6) of that result, 
there exists $c \in \bk$ such that $G(X,Y) \in \bk[X,(X-c)Y]$
(choosing such a $c\in\bk$ is equivalent to the choice made in assertion (6)).
It follows that $F(X,Y)$ belongs to $\bk[X,(X-c)Y]$ or $\bk[X(Y-c), Y]$,
so $F$ is not $\gamma$-small.
This contradicts the hypothesis that $F$ is  a small field generator of~$A$.
\end{proof}

The next paragraph introduces notation that we need for proving Theorem~\ref{8923gdtswA2y87oa9fhcbnccv}.

\begin{parag} \label {9238vvx761b71wtugs}
Let $\bk$ be an algebraically closed field, 
$F = \sum_{i,j} a_{ij}X^iY^j \in \bk[X,Y]$ ($a_{ij} \in \bk$) a polynomial of degree $d>0$ and
$F^*(W,X,Y) = \sum_{i,j} a_{ij}W^{d-i-j}X^iY^j \in \bk[W,X,Y]$ the homogenization of $F$.
Consider the pencil $\Lambda = \Lambda(F)$ on $\proj^2$ defined by
$$
\Lambda = \Lambda(F) = \setspec{ \div_0( \lambda_0 F^* - \lambda_1 W^{d} ) }{ (\lambda_0 : \lambda_1) \in \proj^1 },
$$
where ``$\div_0$'' means ``divisor of zeroes'' (of a homogeneous polynomial).
Note that $dE_0 \in \Lambda$, where $E_0 = V(W)$ is the line at infinity.
Let $\Beul$ be the set of base points of $\Lambda$, including infinitely near ones, and note that $\Beul$ is a finite set.
Consider the minimal resolution of the base points of $\Lambda$,
$$
S_n \xrightarrow{\pi_n} S_{n-1} \to \cdots \to S_1 \xrightarrow{\pi_1} S_0 = \proj^2,
$$
where $S_0, \dots, S_n$ are nonsingular projective surfaces and,
for each $i \in \{1, \dots, n\}$,
$\pi_i : S_i \to S_{i-1}$ is the blowing-up of $S_{i-1}$ at the point $P_{i} \in S_{i-1}$
(so $\Beul = \{P_1, \dots, P_n\}$).
Write $E_i = \pi_i^{-1}(P_i) \subset S_i$ for $i \in \{1, \dots, n\}$.
Given a point $P$ and a divisor $D$ on some nonsingular surface, the multiplicity of $P$ on $D$
is denoted $\mu(P,D)$.
For each $i \in \{0, \dots, n\}$, let $\Lambda^{(i)}$ be the strict (or proper) transform of $\Lambda$ on $S_{i}$
(defined in \cite{Rus:fg}, just before 2.4); note that $\Lambda^{(0)}=\Lambda$.
\begin{enumerata}

\item Let $i \in \{1, \dots, n\}$.
Then $\Lambda^{(i-1)}$ is a pencil on $S_{i-1}$ and $P_{i} \in S_{i-1}$ is a base point of it.
The positive integer $\inf\setspec{ \mu(P_{i},D) }{ D \in \Lambda^{(i-1)} }$
is called the multiplicity of $P_i$ as a base point of $\Lambda$, and is denoted $\mu(P_i,\Lambda)$.
We shall abbreviate it $\mu(P_i)$ in the proof of \ref{8923gdtswA2y87oa9fhcbnccv}.

\item \label {998vgcb1vdfdbzxxbenxbdl}
Let $P$ be a point of $S_i$ and $D$ a divisor on $S_j$ 
($i,j \in \{0, \dots, n\}$).  Define $\mu(P,D) \in \Integ$ by
$$
\mu(P, D ) = \begin{cases}
0, & \text{if $i < j$}, \\
\mu(P,\tilde D) \text{ where $\tilde D \subset S_{i}$ is the strict transform of $D$}, & \text{if $i \ge j$}.
\end{cases}
$$
So $\mu(P_i,E_j) \in \{0,1\}$ for all $i,j \in \{1, \dots, n\}$. 

\item \label {9dfu32i621fcznxj1ny2r}
Let $\Lambda_\infty^{(n)}$ be the unique element of $\Lambda^{(n)}$ whose support contains the strict
transform of $E_0$ (so $\Lambda_\infty^{(n)}$ is an effective divisor on $S_n$).
Given $i \in \{0, \dots, n\}$ and an irreducible curve $D \subset S_i$,
let $\epsilon(D) \in \Nat$ be the coefficient of $\tilde D$ in the divisor $\Lambda_\infty^{(n)}$,
where $\tilde D \subset S_n$ denotes the strict transform of $D$
($\epsilon(D)$ is also defined in \cite[3.4]{Rus:fg}).

\end{enumerata}
\end{parag}

We shall now improve \ref{982393298urcnj092} in the special case where $F$ is a small field generator
and $\bk$ is algebraically closed:

\begin{theorem}  \label {8923gdtswA2y87oa9fhcbnccv}
Assume that $\bk$ is algebraically closed and let $F$ be a small field generator of $A=\kk2$.
Then there exist $\gamma = (X,Y) \in \Rec^+(F)$ and $(m,n) \in \Nat^2$ satisfying 
\begin{equation}  \tag{$*$}
(m,n) \in \supp_\gamma(F) \subseteq \langle (0,0), (m,0), (0,n-m), (m,n) \rangle  \text{\ \ and\ \ }
(m,0) \notin \supp_\gamma(F) .
\end{equation}
Moreover, for any such $\gamma = (X,Y)$ and $(m,n)$, the following hold.
\begin{enumerata}

\item \label {a-65b76rnyrhjdsjddvn} $(m,n) = \bideg_A(F)$ and $2 \le m < n$.

\item \label {b-65b76rnyrhjdsjddvn} If we write $F = \sum_{i,j} a_{ij}X^iY^j$ ($a_{ij} \in \bk$) then
the polynomial $H(Y) = \sum_{j=0}^n a_{mj} Y^j$ has either one or two roots.

\item \label {c-65b76rnyrhjdsjddvn} If $H(Y)$ has one root then
$\supp_\gamma(F) \subseteq \langle (0,0), (m-1,0), (0,n-m), (m,n) \rangle$
and $\Galg(F,A) \subseteq \{ (X), (Y) \}$.

\item \label {d-65b76rnyrhjdsjddvn} If $H(Y)$ has two roots then the following hold.
\begin{enumerata}

\item $m \mid n$

\item  Consider the homogenization $F^*(W,X,Y) = \sum_{i,j} a_{ij} W^{m+n-i-j}X^iY^j \in \bk[W,X,Y]$ of $F$
and note that $(0:0:1) \in \proj^2$ is a common point of the curve $V(F^*) \subset \proj^2$ and
of the line at infinity $V(W)$.
Then exactly one element $R$ of $\bbV^\infty(F,A)$ is centered at~$(0:0:1)$, and $[R/\mgoth_R : \bk(F)] = m$
(cf.\ \ref{939ri09109e2xj9cru}).
That is, $F$ has exactly one dicritical over the point $(0:0:1)$ and that dicritical has degree $m$.

\item Let $k = \deg F(0,Y)$ (where $k=0$ if $F(0,Y)=0$).
Then $m \mid k$, $k \leq n-m$ and 
$\supp_\gamma(F) \subseteq \langle (0,0), (m,0), (0,k), (m,n) \rangle$.

\item $\Galg(F,A) \subseteq \{ (Y-r_1), (Y-r_2) \}$, where $r_1,r_2$ are the roots of $H(Y)$.

\end{enumerata}

\end{enumerata}
\end{theorem}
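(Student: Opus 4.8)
The plan is to analyze the pencil $\Lambda = \Lambda(F)$ and its minimal resolution, following the machinery of \ref{9238vvx761b71wtugs}, and to extract from the geometry at infinity the combinatorial constraints on $\supp_\gamma(F)$. First I would establish the existence of a suitable $\gamma = (X,Y) \in \Rec^+(F)$ together with $(m,n)$ satisfying $(*)$. By \ref{982393298urcnj092}, $F$ is rectangular, and by \ref{pf900293012dj0mce19} we have $(m,n) = \bideg_A(F)$ with $2 \le m < n$; so starting from any $\gamma_0 \in \Rec^+(F,A)$, the task is to modify $Y$ by a translation (replacing $Y$ by $Y - c$ for a suitable $c \in \bk$) so as to force $(m,0) \notin \supp_\gamma(F)$, which amounts to arranging that $F(X,0)$ has $X$-degree $< m$. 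This is exactly the content of the argument behind \ref{pf900293012dj0mce19}\eqref{0c987w53xhgr3jrqh}, applied via \cite[1.6]{Rus:fg2}: the conclusion ``$G(X,Y) \in \bk[X,(X-c)Y]$'' for $G \in \{F(X,Y),F(Y,X)\}$ forces a vertex of the Newton polygon to be cut off after the translation $Y \mapsto Y-c$, and after possibly swapping $X,Y$ one lands in $\Rec^+$ with $(*)$ holding.

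Next, for a fixed such $\gamma$ and $(m,n)$, I would prove \eqref{a-65b76rnyrhjdsjddvn}: the equality $(m,n) = \bideg_A(F)$ follows since the support still spans a rectangle with upper-right vertex $(m,n)$ and $\gamma \in \Rec^+$, so $\bideg$ is well-defined by \ref{0cv9v8n8dllrklslk52kf0}; and $2 \le m < n$ is \ref{pf900293012dj0mce19}\eqref{712tgxuydc82} together with \eqref{0c987w53xhgr3jrqh}. For \eqref{b-65b76rnyrhjdsjddvn}, the polynomial $H(Y) = \sum_j a_{mj}Y^j = F_{\text{ver}}(Y)$ (in the notation of \ref{09239023r02n02b27c2c8h2}) governs the behaviour of $F^*$ along the line at infinity near $(0:0:1)$; the number of distinct roots of $H$ equals the number of branches of $\overline{V(F)}$ meeting $V(W)$ at $(0:0:1)$, hence (by the surjection $\bbV^\infty(F,A) \to \overline{V(F)} \setminus V(F)$ of \ref{939ri09109e2xj9cru} and the fact that $F$ is a field generator, so $L/K$ is rational and $\dic(F,A)$ is small) this number is controlled. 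More precisely I would run the resolution of $\Lambda$ and use that $\Lambda_\infty^{(n)}$ is a tree of rational curves with combinatorial structure forced by $\dic(F,A) \le $ (a small bound); the case analysis on $\epsilon$-values and multiplicities $\mu(P_i)$ along the chain over $(0:0:1)$ yields that at most two branches emanate, i.e. $H$ has one or two distinct roots.

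For \eqref{c-65b76rnyrhjdsjddvn}: if $H$ has a single root, after a further translation in $Y$ we may assume that root is $0$, so $a_{m,0} = 0$, giving $\supp_\gamma(F) \subseteq \langle (0,0),(m-1,0),(0,n-m),(m,n)\rangle$; then $F_{\text{hor}}$ and $F_{\text{ver}}$ each become (up to a unit) a power of a linear form, and \ref{09239023r02n02b27c2c8h2}\eqref{09dsfjk3346gbxbcdzrew24} pins $\Galg(F,A)$ down to be contained in $\{(X),(Y)\}$. For \eqref{d-65b76rnyrhjdsjddvn}: when $H$ has two roots $r_1, r_2$, the two dicritical branches over $(0:0:1)$ force the Newton polygon to be ``lopsided'' — the divisibility $m \mid n$ comes from comparing the multiplicity of the base point $P_1 = (0:0:1)$ (which is $m$, read off from $(m,n) \in \supp_\gamma(F)$ and $(m,0) \notin \supp_\gamma(F)$) with the structure of $\Lambda_\infty^{(n)}$; item (ii) identifies the unique dicritical over $(0:0:1)$ and computes its degree as $\mu(P_1) = m$ directly from the local equation $F^* \sim W^n Y^{\,?} + \cdots$ near that point; item (iii), $m \mid k = \deg F(0,Y)$ and $k \le n-m$ and the refined Newton-polygon inclusion, follows by tracking the strict transform of the line $V(X)$ through the resolution and using that its intersection numbers with the $E_i$ are multiples of $m$; and item (iv) is again \ref{09239023r02n02b27c2c8h2}\eqref{09dsfjk3346gbxbcdzrew24}, since with two roots the only candidates for elements of $\Galg(F,A)$ of the form $(Y-b)$ require $F_{\text{ver}}(b) = H(b) = 0$, while $(X-a) \in \Galg(F,A)$ is excluded by the shape of $F_{\text{hor}}$ forced by $(*)$.

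\textbf{Main obstacle.} I expect the hard part to be the resolution-of-singularities bookkeeping in \eqref{b-65b76rnyrhjdsjddvn} and \eqref{d-65b76rnyrhjdsjddvn}(i)–(iii): one must show that the dicritical data $\dic(F,A)$ and the degrees $\Delta(F,A)$ — constrained by \ref{89329d675fd43q} (the degrees are coprime, since $F$ is a field generator) and by Russell's analysis of the tree $\Lambda_\infty^{(n)}$ in \cite{Rus:fg} — force precisely the stated divisibilities $m \mid n$, $m \mid k$ and the inequality $k \le n - m$. This requires carefully identifying which curve in $\Lambda_\infty^{(n)}$ is the relevant dicritical, computing $\epsilon(\cdot)$ and $\mu(\cdot)$ along the branch(es) over $(0:0:1)$, and converting those into statements about the exponents appearing in $\supp_\gamma(F)$. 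The translations used to normalize the roots of $H$ must be checked to preserve membership in $\Rec^+(F,A)$ and the condition $(*)$, which is routine but needs care.
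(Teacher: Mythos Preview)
Your overall shape is right (pencil $\Lambda$, Russell's structure theorem, Newton-polygon bookkeeping), but there are two concrete problems. First, a geographic mix-up: the polynomial $H(Y)=\sum_j a_{mj}Y^j$ is the initial form of $F_{q_0}(W,Y)=F^*(W,1,Y)$, so its roots describe tangent directions at $q_0=(0:1:0)$, not at $p_0=(0:0:1)$ as you wrote. The single dicritical of degree $m$ in \eqref{d-65b76rnyrhjdsjddvn}(ii) lives over $p_0$; the branching governed by $H$ happens over $q_0$. This matters, because the whole argument for \eqref{b-65b76rnyrhjdsjddvn} is a count at $q_0$, while \eqref{d-65b76rnyrhjdsjddvn}(ii)--(iii) are deduced from what happens at $p_0$. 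Second, your route to $(*)$ is backwards: the conclusion ``$G\in\bk[X,(X-c)Y]$'' from \cite[1.6(6)]{Rus:fg2} is exactly what smallness \emph{forbids} (that is how \ref{pf900293012dj0mce19}\eqref{0c987w53xhgr3jrqh} works), so it cannot be invoked here. The paper instead translates both $X$ and $Y$ (by roots of $F_{\text{hor}}$ and $F_{\text{ver}}$) to kill $(m,0)$ and $(0,n)$, then uses $s\ge1$ from \cite[1.6(2)]{Rus:fg2} (so $\mu(p_0)=\mu(p_1)=m$) to force $j-i\le n-m$ on $\supp_\gamma(F)$.

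The more serious gap is the engine behind \eqref{b-65b76rnyrhjdsjddvn} and \eqref{d-65b76rnyrhjdsjddvn}. You propose to bound the number of roots of $H$ via a bound on $\dic(F,A)$ and coprimality of degrees, but that is not how the paper proceeds and it is not clear such a bound suffices. The actual argument is a multiplicity count: with the base-point set $\Beul$ decomposed as $\{p_0,\dots,p_s\}\cup\{p_{11},\dots,p_{1h}\}\cup\{q_0\}\cup\{q_{11},\dots,q_{1\ell}\}\cup\Beul'$, one uses $\sum_{P\in\Beul}\mu(P)=3d-2$ (genus zero, \cite[3.3]{Rus:fg}), $\ell=s+1$ (smallness, \cite[1.6(5)]{Rus:fg2}), and $\epsilon(E_{q_0})=\mu_1>0$ to obtain the inequality $|\Meul|+\delta\le 2$, where $\Meul$ is the set of maximal elements of $\{q_{11},\dots,q_{1\ell}\}$ and $\delta=0$ or $1$ according as $h=0$ or $h>0$. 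Since the number $\rho$ of roots of $H$ equals the number of minimal elements of $\{q_{11},\dots,q_{1\ell}\}$ and $\rho\le|\Meul|$, this gives $\rho\le2$ \emph{and simultaneously} that $\rho=2$ forces $h=0$, hence $\nu=0$. That single extra conclusion ``$h=0=\nu$ when $\rho=2$'' is what drives all of \eqref{d-65b76rnyrhjdsjddvn}: $\nu=0$ gives $m\mid n$; $h=0$ means the only base points over $p_0$ are $p_0<\cdots<p_s$ with constant multiplicity $m$, which yields the unique dicritical $\tilde E_{p_s}$ of degree $m$; and then the edge of the Newton polygon of $F_{p_0}(W,X)$ from $(m,0)$ to $(0,m+n-k)$ must have slope compatible with that single dicritical, giving $m\mid(m+n-k)$, hence $m\mid k$. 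Your plan does not isolate this $3d-2$ count or the $|\Meul|+\delta\le2$ inequality, and without them the divisibilities in \eqref{d-65b76rnyrhjdsjddvn} have no source.
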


The above theorem immediately implies the following (to be improved in \ref{9854dnc2mrhvfc}):

\begin{corollary}  \label {fj29o9232g7qwjda}
Assume that $\bk$ is algebraically closed and let $F$ be a small field generator of $A=\kk2$.
Then $| \Gamma(F,A) | \le 2$.
\end{corollary}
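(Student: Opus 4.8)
The plan is to derive this directly from Theorem~\ref{8923gdtswA2y87oa9fhcbnccv}. The first observation is that $|\Gamma(F,A)| = |\Galg(F,A)|$, because $\pgoth \mapsto V(\pgoth)$ is a bijection $\Galg(F,A) \to \Gamma(F,A)$; so it suffices to show $|\Galg(F,A)| \le 2$.

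First I would apply Theorem~\ref{8923gdtswA2y87oa9fhcbnccv} to the small field generator $F$ (this is where the hypotheses that $\bk$ is algebraically closed and that $F$ is a small field generator are consumed): it produces a pair $\gamma = (X,Y) \in \Rec^+(F)$ and $(m,n) \in \Nat^2$ satisfying $(*)$. With $\gamma$ fixed, I would write $F = \sum_{i,j} a_{ij} X^i Y^j$ with $a_{ij} \in \bk$ and form the polynomial $H(Y) = \sum_{j=0}^{n} a_{mj} Y^j$ appearing in part~\eqref{b-65b76rnyrhjdsjddvn} of the theorem.

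Next I would split according to the (exhaustive) dichotomy of part~\eqref{b-65b76rnyrhjdsjddvn}, namely that $H(Y)$ has either one or two roots. If $H(Y)$ has exactly one root, part~\eqref{c-65b76rnyrhjdsjddvn} gives $\Galg(F,A) \subseteq \{ (X), (Y) \}$, whence $|\Galg(F,A)| \le 2$. If $H(Y)$ has exactly two roots $r_1, r_2$, then part~\eqref{d-65b76rnyrhjdsjddvn}(iv) gives $\Galg(F,A) \subseteq \{ (Y-r_1), (Y-r_2) \}$, whence again $|\Galg(F,A)| \le 2$. Combining the two cases with the bijection above yields $|\Gamma(F,A)| \le 2$, as desired.

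There is essentially no obstacle internal to this argument; all of the substance has been absorbed into Theorem~\ref{8923gdtswA2y87oa9fhcbnccv}. The only points requiring a moment's care are that the case distinction of part~\eqref{b-65b76rnyrhjdsjddvn} is complete, and that for this particular corollary one needs only the final membership statements for $\Galg(F,A)$ in parts~\eqref{c-65b76rnyrhjdsjddvn} and~\eqref{d-65b76rnyrhjdsjddvn}(iv), not the finer structural conclusions in \eqref{d-65b76rnyrhjdsjddvn}(i)--(iii).
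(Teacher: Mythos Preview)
Your proposal is correct and matches the paper's approach exactly: the paper states that the corollary is an immediate consequence of Theorem~\ref{8923gdtswA2y87oa9fhcbnccv}, and your argument spells out precisely this implication via the dichotomy in part~\eqref{b-65b76rnyrhjdsjddvn} and the inclusions for $\Galg(F,A)$ in parts~\eqref{c-65b76rnyrhjdsjddvn} and~\eqref{d-65b76rnyrhjdsjddvn}(iv).
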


\begin{proof}[Proof of \ref{8923gdtswA2y87oa9fhcbnccv}]
Since $F$ is a small field generator of $A$, $\Rec^+(F,A) \neq \emptyset$.
So $(m,n) = \bideg_A(F)$ is defined and $2 \le m < n$ by \ref{pf900293012dj0mce19}.
For any choice of $\gamma_1=(X_1,Y_1) \in \Rec^+(F,A)$, we have
$ (m,n) \in \supp_{\gamma_1}(F) \subseteq \langle (0,0), (m,0), (0,n), (m,n) \rangle$.
The pair $\gamma_1$ being given, there exist $a,b \in \bk$ such that the element
$\gamma=(X,Y)=(X_1-a,Y_1-b)$ of $\Rec^+(F,A)$ satisfies
$(m,0), (0,n) \notin \supp_\gamma(F)$ (we use that $\bk$ is algebraically closed here).
So there exists $\gamma=(X,Y) \in \Rec^+(F,A)$ satisfying
\begin{equation}  \label {8293dh81hxnbcbrxxmnrhc3y}
\text{\small
$(m,n) \in \supp_\gamma(F) \subseteq \langle (0,0), (m,0), (0,n), (m,n) \rangle  \text{\ \ and\ \ }
(m,0),(0,n) \notin \supp_\gamma(F)$},
\end{equation}
and we fix such a $\gamma$ from now-on.
We shall prove that $(*)$ and (\ref{a-65b76rnyrhjdsjddvn}--\ref{d-65b76rnyrhjdsjddvn}) hold.
In fact we already noted that \eqref{a-65b76rnyrhjdsjddvn} is true.

Consider the pencil $\Lambda = \Lambda(F)$ on $\proj^2$ (cf.\ \ref{9238vvx761b71wtugs}).
Then $\Lambda$ has two base points on $\proj^2$, namely, $p_0 = (0:0:1)$ and $q_0=(0:1:0)$.
Let $\Beul$ be the (finite) set of all base points of $\Lambda$, including infinitely near ones.
Define a partial order $\leq$ on the set $\Beul$ by declaring that $q < q'$ $\Leftrightarrow$ $q'$ is infinitely near $q$;
note that $p_0$ and $q_0$ are exactly the minimal elements of the poset $(\Beul,\leq)$.
For each $q \in \Beul$,
let $\mu(q,\Lambda)$ denote the multiplicity of $q$ as a base point of $\Lambda$ (cf.\ \ref{9238vvx761b71wtugs})
and let us use the abbreviation $\mu(q) = \mu(q,\Lambda)$.
Given $q \in \Beul$, let $E_q$ denote the exceptional curve created by blowing-up $q$
(in the notation of \ref{9238vvx761b71wtugs}, if $q=P_i$ then $E_q=E_i$).
Given $p,q \in \Beul$, $\mu(p,E_q) \in \{0,1\}$ is defined in
\ref{9238vvx761b71wtugs}\eqref{998vgcb1vdfdbzxxbenxbdl}.
We also define
\begin{equation} \label {92378tr78grxfemnnzvxc}
\text{$\Beul_q = \setspec{ p \in \Beul }{ \mu(p,E_{q})>0 }$ for each $q \in \Beul$.}
\end{equation}

Write $F = \sum_{i,j} a_{ij}X^iY^j$ ($a_{ij} \in \bk$) and
$F^*(W,X,Y) = \sum_{i,j} a_{ij}W^{m+n-i-j}X^iY^j$ as in the statement.
Define $F_{p_0}(W,X) = F^*(W,X,1)$ and $F_{q_0}(W,Y) = F^*(W,1,Y)$; then
\begin{align}
F_{p_0}(W,X) &
= \textstyle \sum_{i,j} a_{ij}W^{m+n-i-j}X^i
= \sum_{i=0}^m a_{i,n}W^{m-i}X^i \ +\ \text{higher order terms}, \\
\label {98439dxfgghyjdnh}
F_{q_0}(W,Y) & = \textstyle \sum_{i,j} a_{ij}W^{m+n-i-j}Y^j
= \sum_{j=0}^n a_{m,j}W^{n-j}Y^j \ +\ \text{higher order terms}
\end{align}
where $a_{m,n} \neq0$, so $\mu(p_0) = m < n = \mu(q_0)$.
Let us define $\mu_1=m$ and $\mu_2=n$; then our notation is compatible with  \cite[1.6]{Rus:fg2}
and the hypothesis of that result is satisfied.
Since $\mu_1<\mu_2$, the ambiguity noted just after \ref{8923ry7fhuh} does not arise here,
so assertions \mbox{\rm (1--6)} of \cite[1.6]{Rus:fg2} are true for $F$.
By part~(2) of that result, there is a unique integer $s\ge1$ such that (i)~there are $s+1$
base points $p_0 < \cdots < p_s$ i.n.\ to $p_0$ with $\mu(p_i)=\mu_1$ for $i=0,\dots,s$,
and (ii)~$|\Beul_{p_s}| \neq 1$ (see \eqref{92378tr78grxfemnnzvxc}).
This defines $s \ge1$ and $p_0,p_1,\dots,p_s$.
By \cite[1.6(3)]{Rus:fg2}, if we define  $\nu = \mu_2 - s\mu_1$ then  
(since $F$ is small by assumption) $0 \le \nu < \mu_1$. That is,
\begin{equation}  \label  {iuxmsm32c4hi7t}
\mu_2 = s\mu_1 + \nu \text{\ \ and\ \ }  0 \le \nu < \mu_1 < \mu_2.
\end{equation}
Since $s\ge1$, there holds $\mu(p_0)=\mu(p_1)=m$;
so, if we write $F_{p_0}(W,X) = \sum_{u,v} b_{u,v}W^{u}X^v$ ($b_{u,v} \in \bk$),
taking into account that $(0,n) \notin \supp_\gamma(F)$
we obtain $u+2v \ge 2m$ for all $(u,v)$ satisfying $b_{u,v} \neq 0$;
it follows that $j-i \le n-m$ for all $(i,j) \in \supp_\gamma(F)$.
In view of \eqref{8293dh81hxnbcbrxxmnrhc3y}, we conclude that condition $(*)$ is satisfied.
There remains to prove (\ref{b-65b76rnyrhjdsjddvn}--\ref{d-65b76rnyrhjdsjddvn}).

\medskip
Let us write $\Beul_{p_s} =  \{ p_{11}, \dots, p_{1h} \}$ and $\Beul_{q_0} =  \{ q_{11}, \dots, q_{1\ell} \}$
(see \eqref{92378tr78grxfemnnzvxc}); this defines $h,\ell,p_{1j},q_{1j}$ in a way that is completely
compatible with  \cite[1.6]{Rus:fg2}.
Since $F$ is small, 
\begin{equation}  \label {67er5cwzs413ws2az}
\ell = s+1
\end{equation}
by  \cite[1.6(5)]{Rus:fg2},
so in particular $\ell>0$; however, $h$ may be zero.
Let us also set $\Beul' = \Beul \setminus \big( 
\{ p_0, \dots, p_s \} \cup  \{ p_{11}, \dots, p_{1h} \} \cup \{ q_0 \} \cup  \{ q_{11}, \dots, q_{1\ell} \} \big)$.
Then we have the disjoint union
\begin{equation}  \label {92523cnfgf3dd}
\Beul
=  \{ p_0, \dots, p_s \}
\cup  \{ p_{11}, \dots, p_{1h} \}
\cup \{ q_0 \}
\cup  \{ q_{11}, \dots, q_{1\ell} \}
\cup \Beul' .
\end{equation}

We claim:
\begin{equation}  \label {0f298e28ge87fs4}
\begin{minipage}[t]{.9\textwidth}
$H(Y)$ has either one or two roots, and if it has two then $h=0=\nu$.
\end{minipage}
\end{equation}
To see this, consider the number  $\epsilon(E_{q_0})$ defined in \ref{9238vvx761b71wtugs}\eqref{9dfu32i621fcznxj1ny2r}.
By 3.5.2 and 3.5.4 of \cite{Rus:fg}, $\epsilon(E_{q_0}) = \epsilon(E_0)-\mu(q_0) = d - \mu_2$,
where $E_0$ is the line $\proj^2 \setminus \aff^2$ and $d = \deg(F) = \mu_1+\mu_2$, so
\begin{equation}  \label {78cfsw27kql2kd}
\epsilon(E_{q_0}) = \mu_1 > 0.
\end{equation}
Let $\Lambda^{(q_0)}$ denote the strict transform of $\Lambda$ with respect to the blowing-up of $\proj^2$ at $q_0$.
Since $\epsilon(E_{q_0}) > 0$,  \cite[3.5.6]{Rus:fg} implies that
\begin{equation} \label {9928127dd21dfgdfihfw}
\text{for a general member $D$ of $\Lambda^{(q_0)}$, all points of  $D \cap E_{q_0}$ belong to $\Beul$.}
\end{equation}

Let $\rho$ be the number of distinct roots of $H(Y)$.
Then the initial form $\sum_{j=0}^n a_{m,j}W^{n-j}Y^j$ of $F_{q_0}(W,Y)$ is a product
$\prod_{i=1}^\rho L_i^{e_i}$ where $L_1, \dots, L_\rho \in \bk[W,Y]$ are pairwise relatively prime 
linear forms and $e_i\ge1$ for all $i = 1, \dots, \rho$.
So, for a general member $D$ of $\Lambda^{(q_0)}$, $D \cap E_{q_0}$ consists of exactly $\rho$ points.
In view of \eqref{9928127dd21dfgdfihfw}, these $\rho$ points belong to $\Beul$; in fact they must be the
minimal elements of $\{ q_{11}, \dots, q_{1\ell} \}$ with respect to the order relation
of the poset $(\Beul,\leq)$.
So $\{ q_{11}, \dots, q_{1\ell} \}$ has exactly $\rho$ minimal elements.
Consequently, to  prove \eqref{0f298e28ge87fs4} it suffices to show that
\begin{equation}  \label {912gfxvnrn4nqtyuq7n}
\text{$\{ q_{11}, \dots, q_{1\ell} \}$ has $1$ or $2$ minimal elements, and
if it has $2$ then $h=0=\nu$.}
\end{equation}

Let us state the facts that we need for proving \eqref{912gfxvnrn4nqtyuq7n}.
We use the abbreviation $\mu(\Seul) = \sum_{p \in \Seul} \mu(p)$ for any subset $\Seul$ of $\Beul$.

\begin{enumerate}
\renewcommand{\theenumi}{\roman{enumi}}

\item \label {i-3r928hdc}  $\mu(\Beul_q) = \mu(q)$ for all $q \in \Beul$ satisfying $\epsilon(E_{q}) > 0$.

\item \label {ii-3r928hdc}  $\mu(q)<\mu_1$ for all $q \in \Beul \setminus \{ p_0, \dots, p_s, q_0 \}$.

\item \label {iii-3r928hdc}  $\epsilon(E_{q}) > 0$ for all $q \in \{ q_{11}, \dots, q_{1\ell} \}$.

\item \label {viii-3r928hdc}  $\mu( p_{i} ) = \mu_1$ for $0 \le i \le s$.

\item \label {iv-3r928hdc}  $\sum_{i=1}^h \mu( p_{1i} ) = \delta \mu_1$ where we define $\delta = \begin{cases}
0 & \text{if $h=0$,}\\
1 & \text{if $h\neq0$.}
\end{cases}$

\item \label {v-3r928hdc}  $\mu(q_0)=\mu_2$ and $\sum_{i=1}^\ell \mu( q_{1i} ) = \mu_2$

\item \label {vi-3r928hdc}  $\mu(\Beul) = 3d-2$, where $d = \deg(F) = \mu_1+\mu_2$.

\end{enumerate}

Proof of (\ref{i-3r928hdc}--\ref{vi-3r928hdc}).
Assertion \eqref{i-3r928hdc} is a well-known consequence of the intersection formula.
Since $F$ is small, \cite[1.6(5)]{Rus:fg2} implies that
$\mu(q)<\mu_1$ for all  $q \in \{ q_{11}, \dots, q_{1\ell} \}$; \eqref{ii-3r928hdc} easily follows.
Given $q \in \{ q_{11}, \dots, q_{1\ell} \}$ we have $\mu(q,E_{q_0})>0$, so 
\cite[3.5.4]{Rus:fg} gives $\epsilon(E_q) \ge \epsilon(E_{q_0}) - \mu(q)$; then
$\epsilon(E_q) \ge  \mu_1-\mu(q) > 0$ by \eqref{78cfsw27kql2kd} and \eqref{ii-3r928hdc}, proving \eqref{iii-3r928hdc}.
\eqref{viii-3r928hdc} follows from the definition of $p_0,\dots,p_s$. 
\eqref{iv-3r928hdc} is obvious if $h=0$, and follows from either one of \eqref{i-3r928hdc} or \cite[1.6(4)]{Rus:fg2}
if $h \neq0$.
We already noted that $\mu(q_0) = n = \mu_2$; the second part of \eqref{v-3r928hdc} then
follows from either one of \eqref{i-3r928hdc} or \cite[1.6(5)]{Rus:fg2}. 
\eqref{vi-3r928hdc} follows from \cite[3.3]{Rus:fg} with $g=0$.
This proves (\ref{i-3r928hdc}--\ref{vi-3r928hdc}).

Since \eqref{92523cnfgf3dd} is a disjoint union,
\begin{multline*}
\mu(\Beul)
= \sum_{i=0}^s \mu(p_i)
+  \sum_{i=1}^h \mu(p_{1i})
+ \mu( q_0 )
+  \sum_{i=0}^\ell \mu(q_{1i})
+ \mu( \Beul' )
= (s+1)\mu_1 + \delta\mu_1 + 2 \mu_2 + \mu(\Beul') .
\end{multline*}
On the other hand, $\mu( \Beul ) = 3d-2$, $d = \mu_1+\mu_2$ and $\mu_2 = s\mu_1+\nu$; so
\begin{equation} \label {745nxvhcuhgdiu823}
\mu(\Beul') = (2-\delta)\mu_1 + \nu - 2.
\end{equation}
Let $\Meul$ be the set of maximal elements of $\{ q_{11}, \dots, q_{1\ell} \}$ and let
$\Neul = \{ q_{11}, \dots, q_{1\ell} \} \setminus \Meul$.
For each $q \in \Meul$, we have $\epsilon(E_q)>0$ by \eqref{iii-3r928hdc},
so $\mu(q) = \mu( \Beul_q )$ by \eqref{i-3r928hdc}.
Moreover, $\Beul_q \subseteq \Beul'$ for all $q \in \Meul$
and $\Beul_q \cap \Beul_{q'} = \emptyset$ for all choices of distinct $q,q' \in \Meul$.
Thus
$$
\mu(\Beul') \ge \mu( \bigcup_{q \in \Meul} \Beul_q )
=\sum_{q \in \Meul} \mu(\Beul_q)
=\sum_{q \in \Meul} \mu(q)
=\mu( \Meul )
$$
and combining this with \eqref{745nxvhcuhgdiu823} gives
\begin{equation} \label {dddssydtg924893758346}
\mu(\Meul) \le  (2-\delta)\mu_1 + \nu - 2.
\end{equation}
We have $\mu(q) \le \mu_1$ for all $q \in \Neul$ by \eqref{ii-3r928hdc}  
and $|\Neul| = \ell - |\Meul| = s+1-|\Meul|$ by \eqref{67er5cwzs413ws2az}, so
\begin{equation} \label {34hv9wjkeh5dso12k}
\mu(\Neul) \le (s+1-|\Meul|)\mu_1.
\end{equation}
By \eqref{dddssydtg924893758346} and \eqref{34hv9wjkeh5dso12k},
\begin{multline*}
s\mu_1+\nu = \mu_2 = \mu( \{ q_{11}, \dots, q_{1\ell} \} ) = \mu(\Meul) + \mu(\Neul) \\
\le (2-\delta)\mu_1 + \nu - 2 + (s+1-|\Meul|)\mu_1
\end{multline*}
so $( |\Meul| + \delta - 3) \mu_1 \leq -2$ and consequently
\begin{equation} \label {23fhbmjoi98654sx}
|\Meul|+\delta \leq 2 .
\end{equation}
Let $\rho$ be the number of minimal elements of $\Beul_{q_0} = \{ q_{11}, \dots, q_{1\ell} \}$.
Then it is clear that $\rho \le |\Meul|$ (actually, $\rho = |\Meul|$ but we don't need to know this),
so $\rho + \delta \le2$ by \eqref{23fhbmjoi98654sx}.
So $\rho$ is either $1$ or $2$, and if it is $2$ then $\delta=0$,
so $h=0$, so $\nu=0$ (for the fact that $h=0$ implies $\nu=0$, see the line just after (8) on page 321 of \cite{Rus:fg2}).
This proves \eqref{912gfxvnrn4nqtyuq7n} and hence \eqref{0f298e28ge87fs4}.
In particular, assertion \eqref{b-65b76rnyrhjdsjddvn} is proved.

\medskip
If $H(Y)$ has one root then, since we arranged that $(m,0) \notin \supp_\gamma(F)$, $H(Y) = a_{mn}Y^n$.
This proves the assertion about $\supp_\gamma(F)$, in \eqref{c-65b76rnyrhjdsjddvn}.
The assertion about $\Galg(F,A)$ then follows from  \ref{09239023r02n02b27c2c8h2}\eqref{09dsfjk3346gbxbcdzrew24}.
So \eqref{c-65b76rnyrhjdsjddvn} is proved.

\medskip
To prove (d), consider the diagram
\begin{equation}  \label {2-dkfjasodfla2}
\xymatrix{
\aff^2 \ar[d]_{f} \ar @<-.4ex> @{^{(}->}[r] & \proj^2  \ar @{-->} [d]_{\phi_\Lambda}  &  X \ar[dl]^{\bar f} \ar[l]_{\pi}  \\
\aff^1 \ar @<-.4ex> @{^{(}->}[r] & \proj^1
}
\end{equation}
where $f$ is the morphism $\Spec A \to \Spec \bk[F]$,
$\phi_\Lambda : \proj^2 \dasharrow \proj^1$ is the rational map determined by $f$
(with domain $\proj^2 \setminus \{p_0,q_0\}$),
$\pi$ is the blowing-up of $\proj^2$ along $\Beul$ and $\bar f$ is a morphism; this gives rise to a
diagram \eqref{dkfjasodfla2}, which (as explained in \ref{difqwkae.dmnkdfuuuu6}) allows us to identify 
the set $\bbV^\infty(F,A)$ of dicriticals with the set $H^\infty$ of horizontal curves at infinity.

Assume that $H(Y)$ has two roots, $r_1,r_2 \in \bk$. Then $h=\nu=0$ by \eqref{0f298e28ge87fs4}.
Since $\nu=0$, \eqref{iuxmsm32c4hi7t} implies that $m \mid n$, so (\ref{d-65b76rnyrhjdsjddvn}-i) is true.
The fact that $h=0$ implies that $\setspec{ q \in \Beul }{ q \ge p_0 } = \{ p_0, p_1, \dots, p_s \}$;
since $\mu(p_i) = \mu_1$ for all $i = 0, \dots, s$,
we see that $\tilde E_{p_s}$ (the strict transform of $E_{p_s}$ on $X$) is the only element $C \in H^\infty$
satisfying $\pi(C) = \{ p_0 \}$; it follows that (\ref{d-65b76rnyrhjdsjddvn}-ii) is true.

To prove (\ref{d-65b76rnyrhjdsjddvn}-iii), we consider the elements
$(m,0)$ and $(0,m+n-k)$ of the support of $F_{p_0}(W,X)$ and the line segment
$L \subset \Reals^2$ joining those two points.
It follows from (\ref{d-65b76rnyrhjdsjddvn}-ii) that $L$ is an edge of the support of $F_{p_0}(W,X)$
and that $m$ divides $m+n-k$.
This implies that $m \mid k$ and that 
$\supp_\gamma(F) \subseteq \langle (0,0), (m,0), (0,k), (m,n) \rangle$,
so (\ref{d-65b76rnyrhjdsjddvn}-iii) is true.

By \ref{09239023r02n02b27c2c8h2}\eqref{09dsfjk3346gbxbcdzrew24}, we have 
$\Galg(F,A) \subseteq \{ (X), (Y-r_1), (Y-r_2) \}$.
We have $(X) \notin \Galg(F,A)$ because $\deg F(0,Y)$ is equal to the integer $k$ of (\ref{d-65b76rnyrhjdsjddvn}-iii),
and $k \neq1$. So  (\ref{d-65b76rnyrhjdsjddvn}-iv) is true.
\end{proof}

\begin{lemma}  \label {qn03h7tg5rvbtc07yr}
Let $\bk$ be an algebraically closed field and let $F$ be a rectangular element of $A=\kk2$ satisfying
$| \Galg(F,A) | \ge 2$.
Suppose that $(X,Y) \in \Rec(F,A)$ and $v(Y) \in \bk[Y] \setminus \bk$ are such that
$F$ is a variable of $A' = \bk[ Xv(Y), Y ]$.
Then $F = c Xv(Y)+w(Y)$ for some $c \in \bk^*$ and $w(Y) \in \bk[Y]$ such that $\deg w(Y) \le \deg v(Y)$.
\end{lemma}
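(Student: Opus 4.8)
The plan is to exploit the two hypotheses separately and then reconcile them. First I would use the fact that $F$ is a variable of $A' = \bk[Xv(Y),Y]$: writing $U = Xv(Y)$, so that $A' = \bk[U,Y]$, the element $F$ is a variable of $\bk[U,Y]$, hence $V(F)$ is a coordinate line of $\Spec A'$; in particular $A'/(F) = \kk1$ and $F$ has exactly one place at infinity as an element of $A'$ (indeed $\dic(F,A')=1$ with the single dicritical of degree~$1$, since $F$ is a variable). Next I would bring in the rectangular structure on $A$. Since $(X,Y)\in\Rec(F,A)$, Lemma~\ref{09239023r02n02b27c2c8h2} describes $\Galg(F,A)$, and the hypothesis $|\Galg(F,A)|\ge 2$ forces the bidegree $(\mathfrak{m},\mathfrak{n})$ of $F$ with respect to $(X,Y)$ to satisfy enough positivity that $F$ is \emph{not} a variable of $A$ itself; the point of the $v(Y)$-substitution is precisely to kill one of the coordinates' contribution to the degree.

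The key computation is then to compare $\deg_Y$ and the $X$-degree of $F$ in the two presentations. Write $F = \sum_{i,j} a_{ij}X^iY^j$ with $(\mathfrak{m},\mathfrak{n})=(\deg_X F,\deg_Y F)$ and the Newton polygon contained in $\langle(0,0),(\mathfrak{m},0),(0,\mathfrak{n}),(\mathfrak{m},\mathfrak{n})\rangle$. In terms of $U=Xv(Y)$ one has $X^i Y^j = U^i Y^{j}/v(Y)^i$; for $F$ to lie in $\bk[U,Y]=A'$ at all, for every $(i,j)\in\supp_\gamma(F)$ the quantity $v(Y)^{\mathfrak{m}}\cdot X^i Y^j$ must be a genuine polynomial contribution, which is automatic, but for $F$ to be a \emph{variable} of $A'$ its $U$-degree must be $\le 1$ after clearing: I would show that $\deg_U F = \mathfrak{m}$ (the top $X$-power survives because $\lco_X(F)=F_{\mathrm{ver}}(Y)$ is a nonzero polynomial in $Y$, and multiplication by $v(Y)^{\mathfrak{m}}$ cannot cancel the leading $U$-term). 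A variable of $\bk[U,Y]$ that is not in $\bk[Y]$ must have $U$-degree equal to the degree of its leading coefficient in $Y$ being a unit — more precisely, a variable of $\bk[U,Y]$ linear-or-higher in $U$ is of the form $c(Y)U + d(Y)$ only when $c(Y)\in\bk^*$. Pushing this through: $\deg_U F = \mathfrak{m}$ and being a variable of $\bk[U,Y]$ forces $\mathfrak{m}=1$, i.e. $F = c\cdot U + w(Y) = cXv(Y)+w(Y)$ with $c\in\bk^*$ and $w(Y)\in\bk[Y]$. Here I must be slightly careful to rule out the symmetric possibility that $F$ is a variable of $\bk[U,Y]$ that happens to involve $Y$ with $\deg_Y$-leading-coefficient a unit in $\bk[U]$ — but that case would force $F$ linear in $Y$, contradicting $\mathfrak{n}=\deg_Y F \ge \mathfrak{m}\ge 1$ together with $|\Galg(F,A)|\ge 2$, which (via \ref{09239023r02n02b27c2c8h2}) forbids $F$ from being a variable of $A$.

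Finally I would read off the degree bound $\deg w(Y)\le \deg v(Y)$ from the shape of the Newton polygon. Since $F = cXv(Y)+w(Y)$ and $\supp_\gamma(F)\subseteq\langle(0,0),(1,0),(0,\mathfrak{n}),(1,\mathfrak{n})\rangle$ (with $\mathfrak{m}=1$ now), the monomials $X^1 Y^j$ occurring come from $cXv(Y)$, so $\deg v(Y) = \mathfrak{n}$; and the monomials $Y^j$ with $j>\deg v(Y)$ in $w(Y)$ would have support points $(0,j)$ with $j>\mathfrak{n}=\deg_Y F$, impossible. Hence $\deg w(Y)\le\mathfrak{n}=\deg v(Y)$, as claimed. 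The main obstacle I anticipate is the bookkeeping that pins down $\deg_U F$ and the clean deduction $\mathfrak{m}=1$ from "$F$ is a variable of $\bk[U,Y]$"; everything else is Newton-polygon bookkeeping and a citation of \ref{09239023r02n02b27c2c8h2} and \ref{0f29382c398y1hdq}/\ref{0cv9v8n8dllrklslk52kf0} for the rectangular normal form.
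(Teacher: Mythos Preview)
Your central step has a genuine gap. You claim that ``$\deg_U F = \mathfrak{m}$ and $F$ being a variable of $\bk[U,Y]$ forces $\mathfrak{m}=1$,'' but this is false: a variable of a polynomial ring in two indeterminates can have arbitrarily large degree in either indeterminate. For instance $G(U,Y)=U^2+Y$ is a variable of $\bk[U,Y]$. Taking $v(Y)=Y$, the polynomial $F=X^2Y^2+Y=G(XY,Y)$ is then a variable of $A'=\bk[XY,Y]$, it is a rectangular element of $A=\bk[X,Y]$ with $(X,Y)\in\Rec(F,A)$ and $\mathfrak{m}=\deg_X F=2$, yet $F$ is certainly not of the form $cXv(Y)+w(Y)$. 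What saves the lemma in this example is that $\Galg(F,A)=\{(X)\}$ has only one element. So the hypothesis $|\Galg(F,A)|\ge 2$ is doing essential work, and your proposal uses it only to conclude that $F$ is not a variable of $A$---a fact already forced by rectangularity.

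The paper's proof proceeds differently: it writes $F(X,Y)=G(Xv(Y),Y)$ with $G(S,T)$ a variable of $\bk[S,T]$, assumes $\deg_S G>1$ for contradiction, and then splits into cases according to \ref{09239023r02n02b27c2c8h2}\eqref{09293bbgxqweiby7sznxadfefj}. If some $(Y-b)\in\Galg(F,A)$, then $\deg F(X,b)=1$, but $F(X,b)=G(Xv(b),b)$ has degree either $\ge 2$ in $X$ (if $v(b)\neq 0$, since $\lco_S G\in\bk^*$) or $0$ (if $v(b)=0$); contradiction. Otherwise there are distinct $a_1,a_2\in\bk$ with $(X-a_i)\in\Galg(F,A)$, and a weighted-degree argument (grading $\bk[S,T]$ with $\deg S=\deg v$, $\deg T=1$) shows that $\deg F(a_i,Y)=1$ cannot hold for two distinct values of $a_i$. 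The case split, and in particular the use of \emph{two} elements of $\Galg(F,A)$ in the second case, is where the hypothesis enters; your direct structural argument bypasses this and therefore cannot succeed.
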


\begin{proof}
We may write $F(X,Y)=G(Xv(Y),Y)$ where $G(S,T)$ is a variable of $\bk[S,T] =\kk2$.
Since $F$ is rectangular, we have $F \notin \bk[Y]$, so $G(S,T) \notin \bk[T]$.
It follows that $\deg_S(G) \ge1$, and it suffices to show that $\deg_S(G)=1$.
Proceeding by contradiction, we assume that 
$$
\deg_S(G) > 1 .
$$
Since $| \Galg(F,A) | \ge 2$ and $(X,Y) \in \Rec(F,A)$,
\ref{09239023r02n02b27c2c8h2}\eqref{09293bbgxqweiby7sznxadfefj} implies that
at least one of the following conditions holds:
\begin{enumerate}

\item[(i)] There exists $b \in \bk$ such that $(Y-b) \in \Galg(F,A)$;

\item[(ii)] there exist distinct $a_1,a_2 \in \bk$ such that $(X-a_1), (X-a_2) \in \Galg(F,A)$.

\end{enumerate}

Suppose that (i) holds.
Write $G(S,T) = \sum_{i=0}^m G_i(T)S^i$ where $G_m(T) \neq 0$.
Then $m = \deg_S(G) \ge 2$; by a well-known property of variables, $G_m(T) \in \bk^*$.
Let $b \in \bk$ be such that $(Y-b) \in \Galg(F,A)$; then $\deg F(X,b)=1$ and
$F(X,b) = G(Xv(b),b) = \sum_{i=0}^m G_i(b)(v(b)X)^i$;
since $m>1$ we have $G_m(b) v(b)^m =0$, so $v(b)=0$ and consequently $F(X,b) = G_0(b) \in \bk$, a contradiction.

Suppose that (ii) holds.
Let $d = \deg v(Y) \ge 1$ and
define an $\Nat$-grading $\bk[S,T] = \bigoplus_{i \in \Nat} R_i$ by stipulating that
$S$ is homogeneous of degree $d$ and $T$ is homogeneous of degree $1$.
Write $G(S,T) = \sum_i H_i(S,T)$ where $H_i(S,T) \in R_i$ for all $i$.
By (ii), we may choose $a \in \bk^*$ such that $\deg F(a,Y)=1$. Then
$$
F(a,Y) = G(av(Y),Y) = \sum_i H_i(av(Y),Y) \quad \text{where $\deg H_i(av(Y),Y) \le i$.}
$$
Let $m = \deg_S(G)$ and $N=\deg_T(G)$, and note that $m \ge2$ and $N \ge1$.
Since $G$ is a variable of $\bk[S,T]$,
\begin{equation}  \label {987fb7f452g2jhyf}
(m,0), (0,N) \in \supp_{(S,T)}(G) \subseteq \langle (0,0),(m,0),(0,N) \rangle .
\end{equation}
If $N \neq md$ then, by  \eqref{987fb7f452g2jhyf}, the highest $H_i(S,T)$ is either\footnote{We use Abhyankar's 
symbol ``$\abh$'' to represent an arbitrary element of $\bk^*$. Note that different occurrences of $\abh$ may represent
different elements of $\bk^*$.}
$\abh S^m$ (if $md>N$) or $\abh T^N$ (if $md<N$),
and in both cases we have $\deg F(a,Y) \ge md > 1$, a contradiction.
So $N = md$ and  the highest $H_i(S,T)$ is 
$H_{md} = ( \lambda T^d + \mu S )^m$  for some $\lambda,\mu \in \bk^*$.
Then for each $j \in \{1,2\}$ we have $\deg F(a_j,Y) = 1 < md$, so the right hand side of
$$
( \lambda Y^d + \mu a_j v(Y) )^m = F(a_j,Y) - \sum_{i<md} H_i(a_jv(Y),Y) 
$$
has degree less than $md$; then $\deg( \lambda Y^d + \mu a_j v(Y) ) < d$ for all $j\in\{1,2\}$,
but clearly there can be only one $a_j$ for which this holds.
This contradiction completes the proof.
\end{proof}

In the following, $\bk$ is an arbitrary field.

\begin{theorem}  \label {9854dnc2mrhvfc}
Let $F$ be a field generator of $A=\kk2$ satisfying $| \Gamma(F,A) | > 2$.
Then there exists $(X,Y)$ such that $A = \bk[X,Y]$ and $F = \alpha(Y)X+\beta(Y)$
for some $\alpha(Y), \beta(Y) \in \bk[Y]$.
\end{theorem}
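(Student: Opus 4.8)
Here is how I would approach the proof.

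If $F$ is a variable of $A$ there is nothing to prove: choosing $Y\in A$ with $A=\bk[F,Y]$ and putting $X=F$, we get $F=1\cdot X+0$. So assume $F$ is not a variable of $A$. Then $F$ is a rectangular element of $A$ by Russell's theorem~\ref{982393298urcnj092}; fix $(X,Y)\in\Rec(F,A)$ and let $(m,n)=\bideg_A(F)$ with $1\le m\le n$. It suffices to prove $m=1$, because then, after interchanging $X$ and $Y$ if necessary, $\deg_X F=1$, i.e.\ $F=\alpha(Y)X+\beta(Y)$ with $\alpha,\beta\in\bk[Y]$. So suppose for contradiction that $m\ge 2$. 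I would first reduce to an algebraically closed base field: set $\bar A=\ck\otimes_\bk A=\ck[X,Y]$. The $(X,Y)$-support of $F$ is unchanged, so $F$ is rectangular in $\bar A$ with $(X,Y)\in\Rec(F,\bar A)$ and $\bideg_{\bar A}(F)=(m,n)$; moreover $F$ is a field generator of $\bar A$ (one checks $\Frac\bar A=\ck(F)^{(1)}$, e.g.\ by localizing at $\bk[F]\setminus\{0\}$ and using $\Frac A=\bk(F)^{(1)}$). By Lemma~\ref{653b34hj24h75f7hk676g}, $|\Galg(F,\bar A)|\ge|\Galg(F,A)|=|\Gamma(F,A)|>2$. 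So it is enough to reach a contradiction from $m\ge 2$ over $\ck$.

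Since $F$ is rectangular in $\bar A$ it is not a variable of $\bar A$, and since $|\Gamma(F,\bar A)|>2$ it is not a \emph{small} field generator of $\bar A$ — otherwise Corollary~\ref{fj29o9232g7qwjda} would give $|\Gamma(F,\bar A)|\le 2$. Hence $F$ is not $(X,Y)$-small, so $F\in\ck[X,u(X)Y]$ for some non-constant $u$, or $F\in\ck[Xv(Y),Y]$ for some non-constant $v$. Interchanging $X$ and $Y$ if necessary (legitimate: $\Rec(F,\bar A)$ is stable under this by Lemma~\ref{0cv9v8n8dllrklslk52kf0}, and the unordered bidegree is unchanged), we may assume $F\in\ck[Xv(Y),Y]$ with $v$ non-constant; choose such a $v$ of maximal degree (the degree is bounded, since $v^i$ divides the coefficient of $X^i$ in $F$). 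Put $\bar A'=\ck[Xv(Y),Y]$. Then $\bar A'=\ck^{[2]}$, $\bar A'\subsetneq\bar A$, $\Frac\bar A'=\Frac\bar A$ (as $X=(Xv(Y))/v(Y)$), so $\bar A'\preceq\bar A$ and $F\in\bar A'$; by Lemma~\ref{56fewf8r8t34kd223lwgfuio}, $|\Gamma(F,\bar A')|\ge|\Gamma(F,\bar A)|>2$.

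The crux is to show that $F$ is a \emph{variable} of $\bar A'$. Granting this, Lemma~\ref{qn03h7tg5rvbtc07yr} — applied to the rectangular element $F$ of $\bar A$, which satisfies $|\Galg(F,\bar A)|\ge 2$, with $(X,Y)\in\Rec(F,\bar A)$, the non-constant $v$, and $F$ a variable of $\bar A'=\ck[Xv(Y),Y]$ — yields $F=cXv(Y)+w(Y)$ with $c\in\ck^*$ and $\deg w\le\deg v$. Then $\deg_X F=1$, which contradicts $\deg_X F\in\{m,n\}$ and $m\ge 2$; this finishes the proof. To show $F$ is a variable of $\bar A'$, the natural plan is a descent: $F$ is a field generator of $\bar A'$ with $|\Gamma(F,\bar A')|>2$, so by Russell's theorem and Corollary~\ref{fj29o9232g7qwjda} it is either a variable of $\bar A'$ (done) or rectangular and not small there, in which case — exactly as above — one passes to a strict birational sub-extension $\bar A''\subsetneq\bar A'$ of the same shape and iterates, obtaining $\bar A\supsetneq\bar A'\supsetneq\bar A''\supsetneq\cdots$. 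Along this chain $\dic(F,-)$ is non-increasing, and by Proposition~\ref{difja;skdfj;aksd} it drops at each step by the number of $f$-horizontal missing curves of the morphism $\Spec\ck[X_i,Y_i]\to\Spec\ck[X_iv_i(Y_i),Y_i]$; \emph{if each such morphism has an $f$-horizontal missing curve}, then $\dic(F,-)$ strictly decreases and the chain, being bounded below by $1$, terminates — necessarily at a ring in which $F$ is a variable.

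I expect the substantive difficulty to be exactly the italicised point. A root $\lambda$ of $v_i$ produces a missing curve $V(Y_i-\lambda)$ of the $i$-th morphism, but one must check it is $f$-horizontal, i.e.\ that $F$ is non-constant along it — equivalently, in the coordinates of $\ck[X_iv_i(Y_i),Y_i]$, that not every coefficient of a positive power of the first variable vanishes at $\lambda$. This is where the hypotheses that $F$ is not a variable, is not a small field generator, and has $|\Gamma|>2$, together with the maximality of $v$, must be combined; it amounts to showing that the maximal choice of $v$ makes $F$ \emph{$(Xv(Y),Y)$-small} in $\bar A'$, which — since a rectangular $\gamma$-small field generator is a small field generator, contradicting $|\Gamma(F,\bar A')|>2$ via Corollary~\ref{fj29o9232g7qwjda} — would already force $F$ to be a variable of $\bar A'$ and render the iteration unnecessary. (Alternatively, tracking the bidegree under the modification, using $v^m\mid c_m$ and $\deg c_m=n$, should show $\bideg_{\bar A'}(F)$ has strictly smaller degree-sum than $\bideg_{\bar A}(F)$, yielding the same conclusion by induction.) In any event, the bookkeeping that carries "$F$ is a variable somewhere down the chain" back up to "$\deg_X F=1$ in $\bar A$" via repeated use of Lemma~\ref{qn03h7tg5rvbtc07yr} must be done carefully, since $F$ need not be a variable of the intermediate rings.
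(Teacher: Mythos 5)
Your reduction to the algebraically closed case, the use of \ref{982393298urcnj092} and \ref{fj29o9232g7qwjda} to get $F\in\ck[Xv(Y),Y]$ with $v$ of maximal degree, and the passage to $\bar A'=\ck[Xv(Y),Y]$ with $|\Gamma(F,\bar A')|>2$ all match the paper's proof. But the crux — which you correctly isolate and do not resolve — is handled in the paper by a completely different mechanism than the one you sketch, and your two fallback ideas both fail. First, your hope that maximality of $v$ makes $F$ $\gamma_1$-small in $\bar A'$ (hence forces $F$ to be a variable of $\bar A'$) is wrong: maximality only rules out $F\in\ck[X_1u(Y_1),Y_1]$, and the paper's proof explicitly continues through the remaining case $F\in\ck[X_1,u(X_1)Y_1]$, i.e.\ $F$ is in general rectangular and not small in $\bar A'$, and is \emph{not} a variable there. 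Second, your proposed unbounded descent with $\dic(F,-)$ strictly decreasing requires every contraction in the chain to have an $f$-horizontal missing curve, which you cannot establish (a root $\lambda$ of $v$ may well give a missing curve contained in a fibre of $F$); and in any case the chain cannot repeat the shape $\ck[X_iv_i(Y_i),Y_i]$, by maximality, so your iteration scheme is not even well posed.

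The missing idea is that the hypothesis $|\Gamma(F,A)|>2$ must be used geometrically, not merely through Corollary~\ref{fj29o9232g7qwjda}. By \ref{09239023r02n02b27c2c8h2}, every element of $\Gamma$ of a rectangular polynomial is a coordinate line $V(X-a)$ or $V(Y-b)$, so among any three distinct elements of $\Gamma(F,R)$ two are parallel, hence disjoint. The paper fixes three curves $C_1,C_2,C_3\in\Gamma(F,A)$, tracks their images under $\Spec A\to\Spec A_1=\Spec\ck[Xv(Y),Y]$ and then under $\Spec A_1\to\Spec A_2=\Spec\ck[X_1,(X_1-c)Y_1]$ (where $c$ is a root of $u$), and shows — using that each contraction collapses a line meeting the relevant $V(Y-\lambda_i)$ — that in $\Spec A_2$ all three images pairwise intersect. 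This contradicts the disjointness fact unless $F$ fails to be rectangular in $A_2$, so $F$ is a variable of $A_2$; the descent stops after exactly two steps, with no dicritical counting. Lemma~\ref{qn03h7tg5rvbtc07yr} applied to $A_1\succeq A_2$ then gives $F=a_1(X_1-c)Y_1+a_2X_1+a_3$, which unwinds to $(Xv(Y)+P(Y))(a_1Y+a_2)-a_1cY+a_3$, visibly linear in $X$. Without this disjointness/intersection argument (or a substitute for it), your proof does not go through.
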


\begin{proof}
We first prove the case where $\bk$ is algebraically closed.
Note the following consequence of
\ref{09239023r02n02b27c2c8h2}\eqref{09293bbgxqweiby7sznxadfefj}, which we will use several times:
\begin{equation} \label {76sd8k4dbvcxzrxf312}
\begin{minipage}[t]{.9\textwidth}
\it Suppose that $F$ is a rectangular element of $R = \kk2$ and that $D_1,D_2,D_3$ are distinct
elements of $\Gamma(F,R)$.
Then $D_i \cap D_j = \emptyset$ for some $i,j \in \{1,2,3\}$.
\end{minipage}
\end{equation}

We may assume that $F$ is not a variable of $A$, otherwise the conclusion is obvious.
Then $F$ is rectangular by \ref{982393298urcnj092}.
Choose $\gamma=(X,Y) \in \Rec(F,A)$, let $(m,n) = (\deg_X(F), \deg_Y(F))$ and recall that $\min(m,n) \ge 1$.
By \ref{fj29o9232g7qwjda}, $F$ is not a small field generator of $A$; 
so $F$ is not $\gamma$-small; interchanging $X$ and $Y$ if necessary, it follows that $F \in \bk[X v(Y), Y]$
for some $v(Y) \in \bk[Y] \setminus \bk$. Any such $v(Y)$ satisfies $\deg v(Y) \le n/m$,
so we may choose $v(Y) \in \bk[Y] \setminus \bk$ satisfying  $F \in \bk[X v(Y), Y]$ and:
\begin{equation} \label {73rtcg8trjfga}
\text{$\deg w(Y) \le \deg v(Y)$ for every $w(Y) \in \bk[Y]$ satisfying  $F \in \bk[X w(Y), Y]$.}
\end{equation}

Let $A_1 = \bk[X v(Y), Y]$ and let $\Phi : \Spec A \to \Spec A_1$ be the birational morphism
determined by the inclusion homomorphism $A_1 \hookrightarrow A$.
By \ref{56fewf8r8t34kd223lwgfuio}, there is an injective set map $\Gamma(F,A) \to \Gamma(F,A_1)$
given by $C \mapsto \Phi(C)$. 
Pick distinct elements $C_1, C_2, C_3 \in \Gamma(F,A)$;
then $\Phi(C_i)$ ($1 \le i \le 3$) are distinct elements of $\Gamma(F,A_1)$.
In particular, 
\begin{equation}  \label {90fj3226g3da1j}
| \Gamma(F,A_1) | \ge 3.
\end{equation}

We may assume that $F$ is not a variable of $A_1$,
otherwise the desired conclusion follows from \ref{qn03h7tg5rvbtc07yr}.
As $F$ is a field generator of $A_1$ which is not a variable,
$F$ is a rectangular element of $A_1$  by \ref{982393298urcnj092}.
Relabeling $C_1,C_2,C_3$ if necessary, we get
\begin{equation} \label {23fdhc9n5gsjz1}
\Phi(C_1) \cap \Phi(C_2) = \emptyset
\end{equation}
by \eqref{76sd8k4dbvcxzrxf312}.
In particular, $C_1 \cap C_2 = \emptyset$.
So, by \ref{09239023r02n02b27c2c8h2}\eqref{09293bbgxqweiby7sznxadfefj}, there exist
$Z \in \{X,Y\}$ and $\lambda_1,\lambda_2 \in \bk$ such that $C_1=V(Z-\lambda_1)$ and $C_2=V(Z-\lambda_2)$.
Let $b \in \bk$ be a root of $v(Y)$, then $\Phi$ contracts the line $V(Y-b) \subset \Spec A$ to a point.
By \eqref{23fdhc9n5gsjz1}, $V(Y-b)$ cannot meet both $C_1,C_2$, so
$$
\text{$C_1 = V(Y-\lambda_1)$ \quad and\quad $C_2 = V(Y-\lambda_2)$ for some distinct $\lambda_1,\lambda_2 \in \bk$.}
$$
For later use, let us also record that one of conditions \eqref{890dfghjk32467ewtvfccbmzc}, \eqref{0sdj76fg7k1hy} holds
(again by \ref{09239023r02n02b27c2c8h2}\eqref{09293bbgxqweiby7sznxadfefj}):
\begin{gather}
\label {890dfghjk32467ewtvfccbmzc} \text{$C_3 = V(Y-\lambda_3)$ for some $\lambda_3 \in \bk$;} \\
\label {0sdj76fg7k1hy} \text{$C_3 = V(X-\lambda_3)$ for some $\lambda_3 \in \bk$.}
\end{gather}

Pick $(X_1,Y_1) \in \Rec(F,A_1)$.
Then \ref{09239023r02n02b27c2c8h2}\eqref{09293bbgxqweiby7sznxadfefj} implies that,
for each $i = 1,2,3$, there exist $Z_i \in \{X_1,Y_1\}$ and $\mu_i \in \bk$ such that 
$\Phi(C_i) = V(Z_i-\mu_i)$.  It is easy to see that $\Phi(C_1) = V(Y-\lambda_1)$ in $\Spec A_1$, 
so $Z_1-\mu_1 = \abh(Y-\lambda_1)$.
So we can choose $(X_1,Y_1) \in \Rec(F,A_1)$ in such a way that $Y_1=Y$. Then
$\bk[Xv(Y),Y] = \bk[X_1,Y]$ and consequently $X_1 = \abh X v(Y) + P(Y)$ for some $P(Y) \in \bk[Y]$.
Multiplying $X_1$ and $P(Y)$ by a unit if necessary, we find that
$( X v(Y) + P(Y), Y ) \in \Rec(F,A_1)$ for some  $P(Y) \in \bk[Y]$.
We set:
$$
\gamma_1 = (X_1,Y_1) = ( X v(Y) + P(Y), Y ) \in \Rec(F,A_1) .
$$
By \eqref{90fj3226g3da1j} and \ref{fj29o9232g7qwjda}, $F$ is not a small field generator of $A_1$.
So $F$ is not $\gamma_1$-small and consequently one of 
\eqref{982qh33cg65r1exv}, \eqref{chdcx3s121dsw12zs4s5wcr} holds:
\begin{gather}
\label {982qh33cg65r1exv} \text{$F \in \bk[X_1 u(Y_1),\, Y_1]$ for some $u(Y_1) \in \bk[Y_1] \setminus \bk$;}\\
\label {chdcx3s121dsw12zs4s5wcr} \text{$F \in \bk[X_1,\, u(X_1) Y_1]$ for some $u(X_1) \in \bk[X_1] \setminus \bk$.}
\end{gather}
If \eqref{982qh33cg65r1exv}  holds then 
$F \in \bk[X_1 u(Y_1), Y_1] = \bk[(X v(Y) + P(Y)) u(Y), Y] = \bk[ Xv(Y)u(Y), Y]$,
which contradicts \eqref{73rtcg8trjfga}.
So \eqref{chdcx3s121dsw12zs4s5wcr}  must hold. Pick $c \in \bk$ such that $u(c)=0$; then
$F \in \bk[X_1, u(X_1) Y_1] \subseteq \bk[X_1, (X_1-c) Y_1]$. Let $A_2 = \bk[X_1, (X_1-c) Y_1]$
and consider the birational morphism $\Psi : \Spec A_1 \to \Spec A_2$ determined by $A_2 \hookrightarrow A_1$.
By \ref{56fewf8r8t34kd223lwgfuio}, $\Psi(\Phi(C_1))$, $\Psi(\Phi(C_2))$ and $\Psi(\Phi(C_3))$
are distinct elements of $\Gamma(F,A_2)$.
We claim:
\begin{equation} \label {98652f35sw2qfdh3267tr}
\text{$\Psi(\Phi(C_i)) \cap \Psi(\Phi(C_j)) \neq \emptyset$ for all choices of $i,j \in \{1,2,3\}$.}
\end{equation}
We prove this in each of the cases \eqref{890dfghjk32467ewtvfccbmzc} and \eqref{0sdj76fg7k1hy}.
Note that $\Psi$ contracts the line $L = V(X_1-c) \subset \Spec A_1$ to a point.
If \eqref{890dfghjk32467ewtvfccbmzc} holds then
$\Phi(C_i) = V(Y_1 - \lambda_i) \subset \Spec A_1$ $(i=1,2,3)$, so $L$ meets $\Phi(C_i)$ for $i=1,2,3$;
then $\Psi(\Phi(C_1)) \cap \Psi(\Phi(C_2)) \cap \Psi(\Phi(C_3)) \neq \emptyset$,
so \eqref{98652f35sw2qfdh3267tr} holds in this case.
If \eqref{0sdj76fg7k1hy} holds then 
$\Phi(C_i) = V(Y_1 - \lambda_i)$ for $i=1,2$; as $L$ meets these two lines, we get 
$\Psi(\Phi(C_1)) \cap \Psi(\Phi(C_2)) \neq \emptyset$.
Moreover, $C_i \cap C_3 \neq \emptyset$ for $i=1,2$, so 
$\Psi(\Phi(C_i)) \cap \Psi(\Phi(C_3)) \neq \emptyset$ for $i=1,2$,
and again \eqref{98652f35sw2qfdh3267tr} is true.
So \eqref{98652f35sw2qfdh3267tr} is true in all cases.

From \eqref{98652f35sw2qfdh3267tr} and \eqref{76sd8k4dbvcxzrxf312},
we deduce that $F$ is not a rectangular element of $A_2$.
As $F$ is a field generator of $A_2$, \ref{982393298urcnj092} implies that
\begin{equation*}
\text{$F$ is a variable of $A_2 = \bk[X_1, (X_1-c) Y_1]$.}
\end{equation*}
Then \ref{qn03h7tg5rvbtc07yr} gives $F = a_1 (X_1-c) Y_1 + a_2 X_1+a_3$
for some $a_1 \in \bk^*$, $a_2,a_3 \in \bk$. Then
$$
F = (X v(Y) + P(Y)) (a_1 Y + a_2) - a_1 c Y + a_3 ,
$$
from which the desired conclusion follows.
This proves the theorem in the case where $\bk$ is algebraically closed.

\medskip
To prove the general case, consider a field $\bk$ and
a field generator $F$ of $A=\kk2$ satisfying $| \Gamma(F,A) | > 2$.
We may assume that $F$ is not a variable of $A$, otherwise the conclusion is obvious.
Then $F$ is a rectangular element of $A$ by \ref{982393298urcnj092} and we may choose $(X,Y) \in \Rec^+(F,A)$.

Let $\ck$ be the algebraic closure of $\bk$ and $\bar A = \ck \otimes_\bk A = \ck^{[2]}$.
Then $F$ is a field generator of $\bar A$ and $(X,Y) \in \Rec^+(F,\bar A)$.
In particular, $F$ is not a variable of $\bar A$ (since it is a rectangular element of $\bar A$).
Note that $\bideg_{\bar A}(F) = (\deg_X(F), \deg_Y(F)) = \bideg_A(F)$.

We have $|\Gamma(F,\bar A)| \ge |\Gamma(F,A)| > 2$ by \ref{653b34hj24h75f7hk676g},
so the case ``$\bk$ algebraically closed'' of the 
Theorem implies that there exists $(X_1,Y_1)$ such that  $\bar A = \ck[X_1,Y_1]$
and $F = \alpha_1(Y_1)X_1 + \beta_1(Y_1)$ for some $\alpha_1(Y_1), \beta_1(Y_1) \in \ck[Y_1]$. 
Observe that $\deg \alpha_1(Y_1) > 0$, for otherwise $F$ would be a variable of $\bar A$.
Write $\beta_1(Y_1) = q(Y_1) \alpha_1(Y_1) + \rho(Y_1)$ with $q(Y_1), \rho(Y_1) \in \ck[Y_1]$ and
$\deg \rho(Y_1) < \deg \alpha_1(Y_1)$. Then
$F = \alpha_1(Y_1) ( X_1 + q(Y_1) ) + \rho(Y_1)
= \alpha_1(Y_2) X_2 + \rho(Y_2)$ where we define $X_2 = X_1 + q(Y_1)$ and $Y_2=Y_1$.
Since $\deg\rho(Y_2) < \deg \alpha_1(Y_2)$, we have $(X_2,Y_2) \in \Rec(F,\bar A)$.
This shows that $\bideg_{\bar A}(F) = (1,n)$ for some $n\ge1$.
Then $\bideg_{A}(F) = (1,n)$, so $F = \alpha(Y)X+\beta(Y)$ for some $\alpha(Y), \beta(Y) \in \bk[Y]$,
as desired.
\end{proof}

\begin{corollary}   \label {3q939xuh23829x9mh812o} 
If  $F$ is a bad field generator of $A = \kk2$ then the following hold.
\begin{enumerata}

\item \label {x23c3x5v4c5bv6nbm7nki0p} $| \Gamma(F,A) | \le 2$ 

\item \label {xvbvn5v5n7bm8m68bn43v53c}
$\Rec(F,A) \neq \emptyset$ and the pair $(m,n) = \bideg_A(F)$ satisfies $2 \le m \le n$.

\item \label {c-84397bcfdv563} There exists $(X,Y) \in \Rec(F,A)$ such that
$$
\Galg(F,A) \subseteq \{ (X), (Y) \} \quad \text{or} \quad \Galg(F,A) \subseteq \{ (X), (X-1) \}.
$$

\end{enumerata}
\end{corollary}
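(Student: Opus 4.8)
The plan is to prove the three assertions in turn, drawing on Theorems~\ref{9854dnc2mrhvfc} and \ref{982393298urcnj092} together with the structural lemmas of Section~\ref{SEC:RecGamma}. Two remarks will be used repeatedly: a bad field generator is in particular a field generator, and it cannot be a variable of $A$ (a variable $F$, with $A = \bk[F,G]$, satisfies $\Frac A = \bk(F,G)$ with $G \in A$ and is therefore a good field generator).

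First I would prove \eqref{x23c3x5v4c5bv6nbm7nki0p}. If $|\Gamma(F,A)| > 2$, Theorem~\ref{9854dnc2mrhvfc} supplies coordinates $(X,Y)$ with $A = \bk[X,Y]$ and $F = \alpha(Y)X+\beta(Y)$; such an $F$ is a good field generator of $A$ (when $\alpha(Y)\neq0$ one has $\bk(F,Y) = \bk(X,Y)$, and when $\alpha(Y)=0$ the field-generator hypothesis forces $\deg\beta = 1$, so $F$ is a variable), contradicting badness. Since $\pgoth\mapsto V(\pgoth)$ is a bijection $\Galg(F,A)\to\Gamma(F,A)$, this also gives $|\Galg(F,A)| \le 2$, which I will need in part~\eqref{c-84397bcfdv563}. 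For \eqref{xvbvn5v5n7bm8m68bn43v53c}: since $F$ is a field generator and not a variable, Theorem~\ref{982393298urcnj092} shows $F$ is rectangular, so $\Rec(F,A)\neq\emptyset$ and $(m,n) = \bideg_A(F)$ is defined with $1 \le m \le n$ by \ref{89798xchh0c09x9cww9}. To exclude $m = 1$, I would take $\gamma = (X,Y) \in \Rec^+(F,A)$; then $\supp_\gamma(F) \subseteq \langle(0,0),(1,0),(0,n),(1,n)\rangle$ forces $F = \alpha(Y)X+\beta(Y)$ with $\alpha(Y),\beta(Y)\in\bk[Y]$ and $\alpha(Y)\neq0$ (because $(1,n)\in\supp_\gamma(F)$), so $X = (F-\beta(Y))/\alpha(Y) \in \bk(F,Y)$ and $F$ is again a good field generator of $A$ --- a contradiction. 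Hence $2 \le m \le n$.

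For \eqref{c-84397bcfdv563} the plan is a short case analysis. By \eqref{x23c3x5v4c5bv6nbm7nki0p}, $\Galg(F,A)$ has at most two elements; by \eqref{xvbvn5v5n7bm8m68bn43v53c} I may fix some $\gamma_0 = (X_0,Y_0) \in \Rec(F,A)$; by \ref{09239023r02n02b27c2c8h2}\eqref{09293bbgxqweiby7sznxadfefj}, every element of $\Galg(F,A)$ has the form $(X_0-a)$ or $(Y_0-b)$ with $a,b\in\bk$; and by \ref{0cv9v8n8dllrklslk52kf0}(a), for all $a,b,c,d\in\bk$ with $ac\neq0$ both $(aX_0+b,\,cY_0+d)$ and $(cY_0+d,\,aX_0+b)$ again lie in $\Rec(F,A)$, so translations, rescalings, and the swap $X_0\leftrightarrow Y_0$ are all available. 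I would then run through the possibilities: if $\Galg(F,A)=\emptyset$ take $(X,Y)=\gamma_0$; if it is a singleton of type $(X_0-a)$ (resp.\ $(Y_0-b)$) translate to reach $\{(X)\}$ (resp.\ $\{(Y)\}$); if it has two elements of opposite type, say $(X_0-a)$ and $(Y_0-b)$, translate both coordinates to reach $\{(X),(Y)\}$; and if it has two elements of the same type, say $(X_0-a_1)$ and $(X_0-a_2)$ with $a_1\neq a_2$, replace $X_0$ by $(X_0-a_1)/(a_2-a_1)$ to reach $\{(X),(X-1)\}$ (and likewise, after a swap, for two elements of $Y_0$-type). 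In each case we land in one of the two prescribed forms.

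The only places carrying a little content are in \eqref{x23c3x5v4c5bv6nbm7nki0p} and \eqref{xvbvn5v5n7bm8m68bn43v53c}, where one must check that a polynomial of the shape $\alpha(Y)X+\beta(Y)$, or a rectangular $F$ with $\deg_X F = 1$, is genuinely a good field generator --- and keep the degenerate case ``$F$ a variable'' from causing trouble. Once \eqref{x23c3x5v4c5bv6nbm7nki0p} and \eqref{xvbvn5v5n7bm8m68bn43v53c} are secured, part~\eqref{c-84397bcfdv563} is a purely mechanical normalization of affine coordinates via Lemmas~\ref{09239023r02n02b27c2c8h2} and \ref{0cv9v8n8dllrklslk52kf0}, so I do not anticipate any serious obstacle.
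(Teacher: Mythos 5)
Your proof is correct and follows essentially the same route as the paper's: part (a) via Theorem~\ref{9854dnc2mrhvfc}, part (b) via Theorem~\ref{982393298urcnj092} plus the observation that $m=1$ would make $F$ good, and part (c) as a coordinate normalization using \ref{09239023r02n02b27c2c8h2} and \ref{0cv9v8n8dllrklslk52kf0} (the paper leaves this last step as an ``easy consequence''; your case analysis is exactly what is intended).
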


\begin{proof}
If $| \Gamma(F,A) | > 2$ then \ref{9854dnc2mrhvfc} implies that
there exists $(X,Y)$ such that $A = \bk[X,Y]$ and $F = \alpha(Y)X+\beta(Y)$
for some $\alpha(Y), \beta(Y) \in \bk[Y]$. This contradicts the hypothesis that $F$ is bad.
Indeed, if $\alpha(Y)=0$ then $F \in \bk[Y]$, so $F$ is not a bad field generator of $A$;
and if $\alpha(Y) \neq 0$ then  $\bk(F,Y)=\bk(X,Y)$, so again $F$ is good. This proves \eqref{x23c3x5v4c5bv6nbm7nki0p}.

Since $F$ is a field generator of $A$ which is not a variable,
$\Rec(F,A) \neq \emptyset$ by \ref{982393298urcnj092}.
The pair $(m,n) = \bideg_A(F)$ is defined and satisfies $1 \le m \le n$.
Since $F$ is bad, we must have $m>1$
(if $m=1$ then pick $(X,Y) \in \Rec^+(F,A)$; then $\deg_X(F)=m=1$, so $\bk(F,Y)=\bk(X,Y)$, so $F$ is good),
which proves \eqref{xvbvn5v5n7bm8m68bn43v53c}.

\eqref{c-84397bcfdv563} is an easy consequence of
$| \Gamma(F,A) | \le 2$ and \ref{09239023r02n02b27c2c8h2}\eqref{09293bbgxqweiby7sznxadfefj}.
\end{proof}

\begin{example}
Let $F = X(X-1)Y^2+Y \in A = \bk[X,Y]$.  Then $\bk(F,XY) = \bk(X,Y)$, so $F$ is a good field generator of $A$.
Moreover, 
\begin{equation}  \label {9f0289d1b12exenmsd}
\Galg(F,A) = \{ (X), (X-1) \}.
\end{equation}
At present, we don't know an example of a bad field generator satisfying \eqref{9f0289d1b12exenmsd}
(compare with \ref{3q939xuh23829x9mh812o}\eqref{c-84397bcfdv563}).
\end{example}

\begin{remark}
Let $\gamma = | \Galg(F,A) |$ where $F$ is a bad field generator of $A = \kk2$.
Then $\gamma \in \{0,1,2\}$ by \ref{3q939xuh23829x9mh812o}.
We shall see in \ref{92898fhuqnvalh893} that the three cases arise.
\end{remark}

\section{Very good and very bad field generators}
\label {SEC:VGVBFGs}

We begin by studying good and very good field generators.
We shall need the following fact, valid over an arbitrary field $\bk$:

\begin{observation}[{\cite[Rem.\ after 1.3]{Rus:fg}}] \label {82364r5gg658943k}
Let $F$ be a field generator of $A=\kk2$.
Then $F$ is a good field generator of $A$ if and only if \/\mbox{\rm ``$1$''} occurs in the list $\Delta(F,A)$.
\end{observation}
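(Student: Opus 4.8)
The statement to prove is Observation~\ref{82364r5gg658943k}: for a field generator $F$ of $A=\kk2$, $F$ is good if and only if ``$1$'' appears in $\Delta(F,A)$. The plan is to translate both conditions into statements about the curve $C_F$ (the complete regular model of $L/K$ where $L=\Frac A$, $K=\bk(F)$) and its points at infinity, using the dictionary set up in Remark~\ref{difqwkae.dmnkdfuuuu6} — namely that $\bbV^\infty(F,A)$ is exactly the set of points of $C_F$ ``at infinity'' and that the degree of a dicritical $R$ equals $[R/\mgoth_R:K]$, so ``$1\in\Delta(F,A)$'' means precisely that there is a $K$-rational dicritical.

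For the direction \emph{good $\Rightarrow$ $1\in\Delta$}: suppose $\bk(F,G)=\bk(X,Y)=\Frac A$ for some $G\in A$. Then $G$ is a transcendence basis of $L$ over $K$ together with $F$, and since $L=K(G)$, $G$ gives an isomorphism $L\cong K^{(1)}$. The point to make is that $\bk[F][G]\subseteq A$ is a polynomial ring in $G$ over $\bk[F]$, and the valuation $v$ of $L/K$ given by $v(P(G))=-\deg_G P$ (the ``pole of $G$'' valuation) has residue field $K$, hence is a $K$-rational point of $C_F$; I must check it is \emph{at infinity}, i.e. $A\nsubseteq$ its valuation ring $R$. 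This holds because if $A\subseteq R$ then $X,Y\in R$, but $X,Y$ are polynomials in $F,G$ with coefficients in $\bk$, and any element of $\bk[F,G]$ with positive $G$-degree has $v<0$, so it would have to be that $X,Y\in\bk[F]$, forcing $A=\Frac A$ to be algebraic over $\bk(F)$ — absurd since $\trdeg_\bk A=2$. Hence $R\in\bbV^\infty(F,A)$ with $[R/\mgoth_R:K]=1$, so $1\in\Delta(F,A)$.

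For the direction \emph{$1\in\Delta\Rightarrow$ good}: suppose some dicritical $R\in\bbV^\infty(F,A)$ has $R/\mgoth_R=K$. Work inside $\Aeul=S^{-1}A$ with $S=\bk[F]\setminus\{0\}$, a PID with $\Frac\Aeul=L$ (as in the proof of Theorem~\ref{89329d675fd43q}). The $K$-rational point $R$ of $C_F$ is one of finitely many places at infinity; choose $g\in\Aeul$ whose divisor of poles is concentrated at the places at infinity and which has a \emph{simple} pole at $R$ — concretely, pick $g\in L$ with $v_R(g)=-1$ and $v_{R'}(g)\ge 0$ for every \emph{finite} place $R'$ (such $g$ exists by weak approximation / Riemann–Roch over $K$, or more elementarily by the PID structure: the places at infinity correspond to a multiplicative set, and one builds $g$ as an appropriate ratio of elements of $\Aeul$). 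Then $g$ is integral over $\bk[F]$-nothing, but more to the point $K(g)=L$: the extension $L/K(g)$ has degree equal to $\sum$ (over poles of $g$) of the local pole orders weighted by residue degrees $=1\cdot[R/\mgoth_R:K]=1$. Now $g\in\Aeul=S^{-1}A$ means $g = P/Q$ with $P\in A$, $Q\in\bk[F]\setminus\{0\}$; replacing $g$ by $Qg=P$ changes nothing about $K(g)=L$ since $Q\in K^*$, so we may take $g=P\in A$. Then $\bk(F,P)=K(g)=L=\Frac A$, i.e. $F$ is a good field generator with companion $G=P\in A$.

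The main obstacle is the second direction: producing an element of $A$ (not merely of $L$) realizing the equality $\bk(F,G)=\Frac A$, and verifying that the $K$-rational point $R$ being a \emph{single simple} pole forces $[L:K(G)]=1$. The degree computation $[L:K(g)]=\sum_{\text{poles }P\text{ of }g}(\text{pole order at }P)\cdot[\text{residue field at }P:K]$ is the standard formula for the degree of a function on a curve (degree of the divisor of poles equals $[L:K(g)]$ when $g\notin K$, and is $0$ when... here $g\notin K$ since it has a pole), so the real content is choosing $g$ with a \emph{single simple} pole at the chosen $K$-rational dicritical and no other poles at infinity and no finite poles — this is where one invokes that $\Aeul$ is a PID together with the $K$-rationality of $R$, or cites Russell's argument directly. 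Since the excerpt attributes this to \cite[Rem.\ after 1.3]{Rus:fg}, in the write-up I would keep the argument brief, emphasizing the dictionary of Remark~\ref{difqwkae.dmnkdfuuuu6} and the degree-of-a-function formula, and only sketch the construction of $g$.
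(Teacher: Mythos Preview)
The paper gives no proof of this Observation; it is simply attributed to \cite[Rem.\ after 1.3]{Rus:fg}. So there is no ``paper's own proof'' to compare against, and I evaluate your argument on its own merits.

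Your overall plan is sound, but there is a slip in each direction.

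\textbf{Good $\Rightarrow$ $1\in\Delta$.} Your justification that $A\nsubseteq R$ is based on the claim that $X,Y$ are polynomials in $F,G$ with $\bk$-coefficients. That is not given: you only know $\bk(F,G)=\bk(X,Y)$, not $\bk[F,G]=\bk[X,Y]$. Fortunately the conclusion is immediate for a different reason: $G\in A$ and $v(G)=-1<0$, so $G\notin R$, hence $A\nsubseteq R$. Replace your paragraph by this one-line observation.

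\textbf{$1\in\Delta$ $\Rightarrow$ good.} Your construction asks for $g\in L$ with $v_R(g)=-1$ and $v_{R'}(g)\ge 0$ at every \emph{finite} place $R'$, but says nothing about the \emph{other infinite} places (the other dicriticals). Yet your degree computation $[L:K(g)]=\sum_{\text{poles}}(\text{order})\cdot(\text{residue degree})=1$ tacitly assumes $R$ is the \emph{only} pole of $g$. Weak approximation alone does not give this. The clean fix is to use that $F$ is a field generator, so $L=K^{(1)}$ and $C_F$ has genus $0$; then Riemann--Roch gives $\ell(R)=\deg(R)+1-g=2$, so there exists a nonconstant $g\in L$ whose pole divisor is exactly $R$. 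Now your degree formula applies and gives $[L:K(g)]=1$. The passage from $g\in L$ to an element of $A$ is exactly as you wrote: since the only pole of $g$ is at an infinite place, $g$ lies in every finite $R'$, hence in $\Aeul=\bigcap_{\text{finite }R'}R'=S^{-1}A$, and clearing a denominator in $\bk[F]\setminus\{0\}$ produces the desired companion $G\in A$.
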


Also note the following consequence of \ref{982393298urcnj092}:

\begin{corollary} \label {8986ad8s89ddsf9d99sd9f}
Let $F$ be a field generator of $A=\kk2$ and $\Delta(F,A) = [d_1, \dots, d_t]$.
Then $F$ is a variable of $A$ if and only if $t=1$.
\end{corollary}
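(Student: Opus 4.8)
The plan is to prove Corollary~\ref{8986ad8s89ddsf9d99sd9f} by relating the number $t = \dic(F,A)$ to whether $F$ is a variable, using Theorem~\ref{982393298urcnj092} for one direction and a direct computation of $\Delta(F,A)$ for a variable for the other. First I would dispose of the easy implication: if $F$ is a variable of $A=\kk2$, then $A = \bk[F,G]$ for some $G$, and the morphism $f : \Spec A \to \Spec\bk[F]$ determined by $\bk[F]\hookrightarrow A$ is (up to identifying $\Spec\bk[F]$ with $\aff^1$) the projection $\aff^2 \to \aff^1$, which has a single dicritical of degree $1$. Concretely, one takes the embedding of $\aff^2$ in $\proj^2$ coming from the pair $(F,G)$; then $\overline{V(F-c)}$ for general $c$ meets the line at infinity in exactly one point, and following Remark~\ref{939ri09109e2xj9cru} (or directly: $\bbV^\infty(F,A)$ consists of the single valuation ring $\bk[G,1/F]_{(1/F)}$ inside $\bk(F)(G) = \bk(F)^{(1)}$, whose residue field is $\bk(F)$), one gets $\bbV^\infty(F,A) = \{R\}$ with $[R/\mgoth_R:\bk(F)]=1$, i.e.\ $\Delta(F,A) = [1]$ and $t=1$.

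For the converse, suppose $t=1$, so $\Delta(F,A) = [d_1]$ for a single positive integer $d_1$. I want to conclude $F$ is a variable. The key observation is that $F$ must be a field generator of $A$: by hypothesis $F$ is a field generator, so actually this is given. Now apply Theorem~\ref{89329d675fd43q}: since $F$ is a field generator, $\gcd(d_1) = d_1 = \eta(\Frac(A)/\bk(F)) = 1$, so $d_1 = 1$ and hence $\Delta(F,A) = [1]$. Thus $F$ has exactly one dicritical, and it has degree $1$. The remaining task is to show that ``exactly one dicritical, of degree $1$'' forces $F$ to be a variable. Here I would invoke Theorem~\ref{982393298urcnj092}: if $F$ were not a variable, it would be a rectangular element of $A$, so we could pick $\gamma = (X,Y) \in \Rec^+(F,A)$ with $(m,n) = \bideg_A(F)$ satisfying $1 \le m \le n$. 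I would then argue that a rectangular element has at least two dicriticals, or one dicritical of degree $\ge 2$ — either way contradicting $\Delta(F,A)=[1]$.

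The main obstacle is precisely this last step: showing a rectangular field generator cannot have $\Delta(F,A) = [1]$. The cleanest route is to analyze the points at infinity of $\overline{V(F)}$ for $\gamma \in \Rec^+(F,A)$. With $\supp_\gamma(F) \subseteq \langle (0,0),(m,0),(0,n),(m,n)\rangle$ and $(m,n)\in\supp_\gamma(F)$, the homogenization $F^*(W,X,Y)$ has its points at infinity (on $V(W)$) among $(0:1:0)$ and $(0:0:1)$, and in fact the leading behaviour forces a dicritical over $(0:0:1)$ of degree $m$ (from the terms $\sum_j a_{m,j}Y^j$, in the variable localization at $X=\infty$ this produces residue degree equal to the $Y$-degree contribution) and a dicritical over $(0:1:0)$ of degree $n$ — so $\Delta(F,A)$ contains both $m$ and $n$, or at least an entry $\ge \min(m,n)$ together with $t\ge 1$; if $m\ge 2$ this already contradicts $d_1=1$, and if $m=1$ then (as in the proof of Corollary~\ref{3q939xuh23829x9mh812o}\eqref{xvbvn5v5n7bm8m68bn43v53c}) $\bk(F,Y) = \bk(X,Y)$ makes $F$ good but we still need $F$ not a variable to get a contradiction — here $m=1<n$ would give two distinct dicriticals of degrees $1$ and $n$, so $t\ge 2$, a contradiction. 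Rather than re-deriving all of this by hand, I would cite Proposition~\ref{difja;skdfj;aksd} or Russell's structure theory: since $F$ is a rectangular field generator with $\bideg_A(F) = (m,n)$ and $1 \le m \le n$, one has $\dic(F,A) \ge 2$ unless $(m,n)=(1,n)$, and even $(1,n)$ with $n\ge 2$ gives $\dic(F,A)\ge 2$; and $(m,n) = (1,1)$ would make $F$ a variable, contradicting that it is rectangular. In all cases $\Delta(F,A) \ne [1]$, contradiction. Hence $F$ is a variable, completing the proof.

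\medskip
(If a fully self-contained argument is wanted for the converse, one can avoid the dicritical count entirely: $t=1$ and Theorem~\ref{89329d675fd43q} give $\Delta(F,A)=[1]$; by Observation~\ref{82364r5gg658943k}, $F$ is a \emph{good} field generator, so $\bk(F,G) = \bk(X,Y)$ with $G \in A$; one then has a birational morphism $\aff^2 \xrightarrow{\Phi}\aff^2$ with $\Phi^*(\text{first coord}) = F$, and $\dic(F,A)=1$ means via Proposition~\ref{difja;skdfj;aksd} applied to a suitable $A\preceq B$ with $B=\bk[F,G']$ that $\Phi$ has no $f$-horizontal missing curve, which forces $\Phi$ to be an isomorphism and $F$ a variable. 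The first, shorter route above is the one I would actually write.)
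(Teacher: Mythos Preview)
Your forward implication (variable $\Rightarrow t=1$) is fine and matches the paper.

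For the converse, you are working much harder than necessary, and the hard part of your argument is not actually justified. You first invoke Theorem~\ref{89329d675fd43q} to deduce $d_1=1$, then try to argue that a rectangular field generator cannot have $\Delta(F,A)=[1]$ by analyzing the \emph{degrees} of the dicriticals lying over $(0:1:0)$ and $(0:0:1)$. The claims ``a dicritical over $(0:0:1)$ of degree $m$'' and ``a dicritical over $(0:1:0)$ of degree $n$'' are not correct in general: several dicriticals can lie over a single point at infinity, and their individual degrees are not simply $m$ and $n$. Your hedge (``at least an entry $\ge\min(m,n)$'') is also not established by what you have written, and neither Proposition~\ref{difja;skdfj;aksd} nor ``Russell's structure theory'' supplies this directly. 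The parenthetical alternative has its own gap: absence of $f$-horizontal missing curves does not by itself force $\Phi$ to be an isomorphism, since vertical missing curves are not excluded.

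The paper's proof sidesteps all of this. It never computes $d_1$; it simply shows $t\ge 2$ whenever $F$ is not a variable. If $F$ is not a variable then, by Theorem~\ref{982393298urcnj092}, $F$ is rectangular; choosing $(X,Y)\in\Rec(F,A)$, the top-degree form of $F$ is $a_{mn}X^mY^n$ (the only support point on the line $i+j=m+n$ inside the rectangle is $(m,n)$), so $\overline{V(F)}$ meets the line at infinity in exactly the two points $(0:1:0)$ and $(0:0:1)$. Then Remark~\ref{939ri09109e2xj9cru} gives $t=\dic(F,A)\ge |\overline{V(F)}\setminus V(F)|=2$. You already cite Remark~\ref{939ri09109e2xj9cru} in the forward direction and you already observe the two points at infinity; you just need to combine these two facts and drop the degree analysis entirely.
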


\begin{proof}
If $F$ is a variable then $\Delta(F,A) = [1]$, so $t=1$.
If $F$ is not a variable then, by \ref{982393298urcnj092}, there exists $(X,Y) \in \Rec(F,A)$.
The embedding $\aff^2 \hookrightarrow \proj^2$ determined by the pair $(X,Y)$ has the property that 
the closure of $V(F)$ in $\proj^2$ meets the line at infinity
in exactly two points.
Then $t\ge2$ by \ref{939ri09109e2xj9cru}.
\end{proof}

Let us agree that the $\gcd$ of the empty set is $+\infty$. Then:

\begin{proposition} \label {q23328r83yd74r9128}
Let $F$ be a field generator of $A=\kk2$ and $\Delta(F,A) = [d_1, \dots, d_t]$.
\begin{enumerata}

\item \label {a-734tc8rgj3rfgh}
If $\gcd\big( \{ d_1, \dots, d_t \} \setminus \{1\} \big)>1$
then $F$ is a very good field generator of $A$.
In particular,
if at most one $i \in \{1, \dots, t\}$ satisfies $d_i>1$ then $F$ is a very good field generator of $A$.

\item \label {b-734tc8rgj3rfgh}
If at least three $i \in \{1, \dots, t\}$ satisfy $d_i=1$ then $F$ is a very good field generator of $A$.

\item \label {c-734tc8rgj3rfgh} If $F$ is a good but not very good field generator of $A$
then 
$$
\Delta(F,A) = [1, \dots, 1, e_1, \dots, e_s]
$$
where \mbox{\rm ``$1$''} occurs either $1$ or $2$ times,
$s \ge 2$, $\min(e_1, \dots, e_s)>1$ and\\
$\gcd(e_1, \dots, e_s)=1$.

\end{enumerata}
\end{proposition}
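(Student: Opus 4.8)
The plan is to deduce all three parts from the way the list $\Delta$ transforms under a birational extension of the ring, i.e.\ from Proposition~\ref{difja;skdfj;aksd}, combined with Theorem~\ref{89329d675fd43q} (the $\gcd$ of $\Delta(F,A)$ equals $1$ for a field generator), Observation~\ref{82364r5gg658943k} (good $\Leftrightarrow$ ``$1$'' occurs in $\Delta$), and Corollary~\ref{3q939xuh23829x9mh812o}(a) ($|\Gamma| \le 2$ for a bad field generator). The common setup is: given $F \in A' \preceq A$, consider the morphisms $\Spec A \xrightarrow{\Phi} \Spec A' \xrightarrow{f} \Spec\bk[F]$; by Proposition~\ref{difja;skdfj;aksd}, $\Delta(F,A)$ is the concatenation of $\Delta(F,A')$ with $[\delta_1,\dots,\delta_h]$, where $C_1,\dots,C_h$ are the pairwise distinct $f$-horizontal missing curves of $\Phi$, $\delta_i$ is the degree of $f|_{C_i}$, and $\delta_i=1 \Leftrightarrow C_i \in \Gamma(F,A')$. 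In particular every entry of $\Delta(F,A')$ is an entry of $\Delta(F,A)$, and $F$ is automatically a field generator of $A'$.

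For part (a), I would assume $g:=\gcd\big(\{d_1,\dots,d_t\}\setminus\{1\}\big)>1$ (this includes the case where that set is empty, where $g=+\infty$) and argue by contradiction that no $A'$ with $F\in A'\preceq A$ can have $F$ as a bad field generator. Indeed, if $F$ were bad for such an $A'$, then by Observation~\ref{82364r5gg658943k} the value ``$1$'' would not occur in $\Delta(F,A')$, so every entry of $\Delta(F,A')$ would lie in $\{d_1,\dots,d_t\}\setminus\{1\}$ and hence be divisible by $g$; the $\gcd$ of the entries of $\Delta(F,A')$ would then be at least $g>1$, contradicting Theorem~\ref{89329d675fd43q}. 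Hence $F$ is a very good field generator of $A$. The ``In particular'' clause is then immediate: if at most one $i$ satisfies $d_i>1$, the set $\{d_1,\dots,d_t\}\setminus\{1\}$ is empty (so $g=+\infty$) or a singleton $\{d\}$ with $d>1$ (so $g=d>1$), so the hypothesis just proved applies.

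For part (b), again by contradiction: suppose at least three indices $i$ satisfy $d_i=1$ but $F$ is a bad field generator of some $A'$ with $F\in A'\preceq A$. By Observation~\ref{82364r5gg658943k}, ``$1$'' does not occur in $\Delta(F,A')$, so all the (at least three) entries of $\Delta(F,A)$ equal to $1$ come from $[\delta_1,\dots,\delta_h]$; thus at least three of the pairwise distinct curves $C_i$ have $\delta_i=1$ and therefore lie in $\Gamma(F,A')$, giving $|\Gamma(F,A')|\ge 3$, which contradicts Corollary~\ref{3q939xuh23829x9mh812o}(a); hence $F$ is very good. For part (c): since $F$ is good, ``$1$'' occurs in $\Delta(F,A)$, so write $\Delta(F,A)=[1,\dots,1,e_1,\dots,e_s]$ with $\min(e_1,\dots,e_s)>1$ and ``$1$'' occurring $r\ge 1$ times; since $F$ is not very good, the contrapositive of part~(b) forces $r\le 2$, and the contrapositive of part~(a) forces $\gcd(\{d_1,\dots,d_t\}\setminus\{1\})=\gcd(e_1,\dots,e_s)=1$, which in particular gives $s\ge 1$ because the $\gcd$ of the empty set is $+\infty\ne 1$. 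Finally $s=1$ would give $e_1=\gcd(e_1,\dots,e_s)=1$, contradicting $\min(e_i)>1$, so $s\ge 2$, which is exactly the asserted form.

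I do not expect a serious obstacle here: the substantive content has already been established in the cited results, and the argument is bookkeeping on the list $\Delta$. The points needing care are the convention $\gcd(\emptyset)=+\infty$, the remark that $F$ remains a field generator after passing from $A$ to any $A'$ with $F \in A' \preceq A$, and making explicit in part~(b) that the missing curves $C_i$ are pairwise distinct so that they really do contribute distinct elements to $\Gamma(F,A')$.
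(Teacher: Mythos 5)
Your proposal is correct and follows essentially the same route as the paper: part (a) via the sublist property from Proposition~\ref{difja;skdfj;aksd} together with Theorem~\ref{89329d675fd43q} and Observation~\ref{82364r5gg658943k}, part (b) via the equivalence $\delta_i=1\Leftrightarrow C_i\in\Gamma(F,A')$ combined with the bound $|\Gamma(F,A')|\le 2$ from Corollary~\ref{3q939xuh23829x9mh812o}, and part (c) as a formal consequence of (a) and (b). The only difference is cosmetic (you phrase (a) as a contradiction where the paper argues directly), and your explicit attention to the $\gcd(\emptyset)=+\infty$ convention and to the distinctness of the missing curves matches what the paper implicitly uses.
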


\begin{proof}
Assertion \eqref{c-734tc8rgj3rfgh} follows from  \eqref{a-734tc8rgj3rfgh} and \eqref{b-734tc8rgj3rfgh}.
To prove  \eqref{a-734tc8rgj3rfgh} and \eqref{b-734tc8rgj3rfgh},
we consider $A'$ such that $F \in A' \preceq A$;
in each of cases \eqref{a-734tc8rgj3rfgh} and \eqref{b-734tc8rgj3rfgh}, it has to be shown that
$F$ is a good field generator of $A'$. Clearly, $F$ is a field generator of $A'$.

\eqref{a-734tc8rgj3rfgh} It follows from \ref{difja;skdfj;aksd} that $\Delta(F,A') = [d_1', \dots, d_s']$
is a sublist of $\Delta(F,A)$, so  $\gcd\big(\{d_1', \dots, d_s'\} \setminus \{1\}\big)>1$.
As $\gcd(d_1', \dots, d_s')=1$ by \ref{89329d675fd43q}, 
``$1$'' occurs in $\Delta(F,A')$, so \ref{82364r5gg658943k} implies that $F$ is a
good field generator of $A'$.

\eqref{b-734tc8rgj3rfgh} Consider the morphisms
$\Spec A \xrightarrow{\Phi} \Spec A' \xrightarrow{f} \Spec \bk[F]$
determined by the inclusions $\bk[F] \hookrightarrow A'  \hookrightarrow A$.
Let $C_1, \dots, C_h$ be the distinct elements of  $\Miss_{\text{\rm hor}}(\Phi,f)$ and,
for each $i \in \{ 1, \dots, h \}$,
let $\delta_i$ be the degree of $f|_{C_i} : C_i \to \Spec\bk[F]$.  Then \ref{difja;skdfj;aksd} gives
$\Delta(F,A) = \big[ \Delta(F,A'), \delta_1, \dots, \delta_h \big]$ and
\begin{equation} \label {x80293r81298r182hwddh}
\text{for all $i \in \{1,\dots, h\}$,\quad $\delta_i=1$ $\Leftrightarrow$ $C_i \in \Gamma(F,A')$.}
\end{equation}
Arguing by contradiction, assume that $F$ is a bad field generator of $A'$;
then (by \ref{3q939xuh23829x9mh812o}) $| \Gamma(F,A') | \le 2$, so (by \eqref{x80293r81298r182hwddh})
``$1$'' occurs at most twice in $[\delta_1, \dots, \delta_h]$;
as (by \ref{82364r5gg658943k}) ``$1$'' does not occur in $\Delta(F,A')$,
it follows that ``$1$'' occurs at most twice in  $\Delta(F,A)$, 
which contradicts the assumption of  \eqref{b-734tc8rgj3rfgh}.
So \eqref{b-734tc8rgj3rfgh} is proved.
\end{proof}

\begin{remark}  \label {pf9823f898012dj}
By~\ref{q23328r83yd74r9128}\eqref{a-734tc8rgj3rfgh},
the polynomials classified in \cite{MiySugie:GenRatPolys},
\cite{NeumannNorbury:simple} and \cite{Sasao_QuasiSimple2006} are special cases of very good field generators.
This gives many complicated examples of very good field generators.
\end{remark}

\begin{example} \label {87hf25f34w32a1a} 
Let $F$ be a bad field generator of $A = \Comp[X,Y]$ such that 
$\Galg(F,A) = \{ (X), (Y) \}$ and $\Delta(F,A) = [3,2]$
(such an $F$ exists by \ref{92898fhuqnvalh893}(c)).
Let $B = \Comp[X/Y,\, Y^2/X]$ and note that $A \preceq B$.
Consider the morphisms
$\Spec B \xrightarrow{\Phi} \Spec A \xrightarrow{f} \Spec \Comp[F]$
determined by the inclusions $\Comp[F] \hookrightarrow A  \hookrightarrow B$.
Then the missing curves of $\Phi$ are $C_1=V(X)$ and $C_2=V(Y)$ and these are $f$-horizontal,
so $\Miss_{\text{\rm hor}}(\Phi,f) = \{ C_1, C_2 \}$.
In the notation of \ref{difja;skdfj;aksd} we have $\delta_1=\delta_2=1$ (because $C_1,C_2 \in \Gamma(F,A)$),
so that result implies that $\Delta(F,B) = [3,2,1,1]$.
Note that {\it $F$ is not a very good field generator of $B$} (because it is bad in $A$).
This shows that, in \ref{q23328r83yd74r9128}\eqref{b-734tc8rgj3rfgh},
one cannot replace ``at least three'' by ``at least two'';
and in the second part of \ref{q23328r83yd74r9128}\eqref{a-734tc8rgj3rfgh}, one cannot replace
``at most one'' by ``at most two''.
\end{example}

\begin{example}  \label {ngru62r56mcutgk}
Let $\bk$ be any field and $F = XY \in A = \bk[X,Y] = \kk2$.
It is easy to see that $F$ is a good field generator of $A$ with $\Delta(F)=[1,1]$;
so by \ref{q23328r83yd74r9128}\eqref{a-734tc8rgj3rfgh} $F$ is a very good field generator of $A$.
Define $B = \bk \big[ X+Y,\, \frac Y{(X+Y)(X+Y+1)} \big]$ and note that $F \in A \preceq B$.
Consider the morphisms
$\Spec B \xrightarrow{\Phi} \Spec A \xrightarrow{f} \Spec \bk[F]$
determined by the inclusions $\bk[F] \hookrightarrow A  \hookrightarrow B$.
Then the missing curves of $\Phi$ are $C_1=V(X+Y)$ and $C_2=V(X+Y+1)$ and these are $f$-horizontal,
so $\Miss_{\text{\rm hor}}(\Phi,f) = \{ C_1, C_2 \}$.
In the notation of \ref{difja;skdfj;aksd} we have $\delta_1=\delta_2=2$
so that result implies that $\Delta(F,B) = [1,1,2,2]$.
By \ref{q23328r83yd74r9128}\eqref{a-734tc8rgj3rfgh}, $F$ is a very good field generator of $B$.
So there exist very good field generators with more than one dicritical of degree greater than $1$.
The polynomial $f$ given on page 298 of \cite{BartoCassou:RemPolysTwoVars} is another example, this one with
$\Delta(f, \Comp[X,Y]) = [1,1,2,4]$.
\end{example}

Observe that the problem of characterizing very good field generators is not settled by \ref{q23328r83yd74r9128};
in particular, we don't know whether the converse of \eqref{c-734tc8rgj3rfgh} is true.
However, very bad field generators can be characterized: 
result \ref{88x889adb823mdnfv}\eqref{n2n92bm2brxgftr}
gives such a characterization and, in fact,
makes it easy to decide whether a given bad field generator is very bad.
We shall derive that characterization from the following result:

\begin{proposition} \label {99z9zcvx61mn24hytf}
Consider $F \in A \preceq A'$, where $F$ is a field generator of $A$.
Let $\Phi : \Spec A' \to \Spec A$ be the morphism determined by $A \hookrightarrow A'$.
The following are equivalent:
\begin{enumerata}

\item \label {7634h2v5n32rvnv6rncddtgd}
$F$ is a bad field generator of $A'$;

\item \label {mnhfre13467hytz}
$F$ is a bad field generator of $A$ and no element of $\Gamma(F,A)$ is a missing curve of~$\Phi$.

\end{enumerata}
\end{proposition}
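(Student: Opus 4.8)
The plan is to reduce the assertion to Russell's criterion \ref{82364r5gg658943k} by means of the concatenation formula \ref{difja;skdfj;aksd}. The heart of the matter is the following claim: \emph{assuming that $F$ is a bad field generator of $A$}, $F$ is a bad field generator of $A'$ if and only if no element of $\Gamma(F,A)$ is a missing curve of $\Phi$. Granting this claim, the proposition follows immediately: if \eqref{7634h2v5n32rvnv6rncddtgd} holds then $F$ is bad in $A'$, hence bad in $A$ by the remark in the introduction (``$F$ good in $A$'' implies ``$F$ good in $A'$''), and then the claim gives \eqref{mnhfre13467hytz}; conversely, if \eqref{mnhfre13467hytz} holds then $F$ is bad in $A$ and the claim gives \eqref{7634h2v5n32rvnv6rncddtgd}.

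To prove the claim, I would introduce the morphisms $\Spec A' \xrightarrow{\Phi} \Spec A \xrightarrow{f} \Spec\bk[F]$ determined by $\bk[F]\hookrightarrow A\hookrightarrow A'$, let $C_1,\dots,C_h$ be the distinct elements of $\Miss_{\text{\rm hor}}(\Phi,f)$, and set $\delta_i=\deg(f|_{C_i})$, as in \ref{difja;skdfj;aksd}. The one small point to note is that every curve $C\in\Gamma(F,A)$ is automatically $f$-horizontal, since $f|_C:C\to\Spec\bk[F]$ is an isomorphism and hence $f(C)=\Spec\bk[F]$; thus, for such a $C$, ``$C$ is a missing curve of $\Phi$'' is equivalent to ``$C\in\Miss_{\text{\rm hor}}(\Phi,f)$''. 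Consequently ``some element of $\Gamma(F,A)$ is a missing curve of $\Phi$'' is equivalent to ``$C_i\in\Gamma(F,A)$ for some $i$'', which in turn, by \ref{difja;skdfj;aksd}(b), is equivalent to ``$\delta_i=1$ for some $i$''.

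Now I would combine this with \ref{difja;skdfj;aksd}(a), which gives $\Delta(F,A')=[\Delta(F,A),\delta_1,\dots,\delta_h]$, and with \ref{82364r5gg658943k}. Since $F$ is bad in $A$, the integer $1$ does not occur in $\Delta(F,A)$; hence $1$ occurs in $\Delta(F,A')$ precisely when $\delta_i=1$ for some $i$, and $1$ occurs in $\Delta(F,A')$ precisely when $F$ is a good field generator of $A'$. Stringing these equivalences together yields ``$F$ is a good field generator of $A'$'' $\Longleftrightarrow$ ``some element of $\Gamma(F,A)$ is a missing curve of $\Phi$'', and negating both sides proves the claim.

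There is no genuinely hard step: the argument is a short chain of equivalences built on \ref{difja;skdfj;aksd} and \ref{82364r5gg658943k}. The only things that need a little care are the bookkeeping in the reduction (correctly tracking the clause ``$F$ is a bad field generator of $A$'' that appears in \eqref{mnhfre13467hytz}) and the observation that elements of $\Gamma(F,A)$ are $f$-horizontal, which is what lets one pass between ``missing curve of $\Phi$'' and ``$f$-horizontal missing curve of $\Phi$'' for such curves.
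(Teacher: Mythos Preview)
Your proof is correct and follows essentially the same approach as the paper's: both arguments hinge on the concatenation formula \ref{difja;skdfj;aksd}, Russell's criterion \ref{82364r5gg658943k}, and the observation that any $C\in\Gamma(F,A)$ is $f$-horizontal. The only difference is organizational: you first peel off the implication ``bad in $A'$ $\Rightarrow$ bad in $A$'' and then prove the remaining equivalence under that hypothesis, whereas the paper runs the full chain of equivalences in one pass.
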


\begin{proof}
Consider the morphisms $\Spec A' \xrightarrow{\Phi} \Spec A \xrightarrow{f} \Spec \bk[F]$
determined by $\bk[F] \hookrightarrow A  \hookrightarrow A'$,
let $C_1, \dots, C_h$ be the distinct elements of $\Miss_{\text{\rm hor}}(\Phi,f)$ and,
for each $i \in \{ 1, \dots, h \}$,
let $\delta_i$ be the degree of the dominant morphism $f|_{C_i} : C_i \to \aff^1$.
We have $\Delta(F,A') = [\Delta(F,A), \delta_1, \dots, \delta_h ]$ by \ref{difja;skdfj;aksd},
and the same result also implies that
\begin{equation}  \label {q3o4ip23idj92jw}
\text{for all $i \in \{1, \dots, h\}$},\ \  \delta_i = 1 \iff C_i \in \Gamma(F,A).
\end{equation}
Now \eqref{7634h2v5n32rvnv6rncddtgd} is true
if and only if (by \ref{82364r5gg658943k})
``$1$'' does not occur in $\Delta(F,A') = [\Delta(F,A), \delta_1, \dots, \delta_h ]$,
if and only if (by \eqref{q3o4ip23idj92jw})
``$1$'' does not occur in $\Delta(F,A)$ and $C_i \notin \Gamma(F,A)$ for all $i$,
if and only if \eqref{mnhfre13467hytz} is true.
The last equivalence uses \ref{82364r5gg658943k} and the fact that, for any $C \in \Gamma(F,A)$,
$C \in \Miss(\Phi) \Leftrightarrow C \in \Miss_{\text{\rm hor}}(\Phi,f)$.
\end{proof}

\begin{proposition}  \label {88x889adb823mdnfv}
Let $F$ be a bad field generator of $A = \kk2$.
\begin{enumerata}

\item \label {n2n92bm2brxgftr}
$F$ is a very bad field generator of $A$ if and only if $\Galg(F,A) = \emptyset$.

\item Suppose that $\Galg(F,A) \neq \emptyset$.
Then there exists $(X,Y)$ such that $A = \bk[X,Y]$ and $(X) \in \Galg(F,A)$.
For any such pair $(X,Y)$, $F$ is a good field generator of $\bk[X, Y/X]$.

\end{enumerata}
\end{proposition}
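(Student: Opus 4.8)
The plan is to prove part (b) first, and then deduce part (a) from it together with Proposition~\ref{99z9zcvx61mn24hytf}.

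\emph{For (b):} I would assume $\Galg(F,A)\neq\emptyset$ and fix $\pgoth\in\Galg(F,A)$. A bad field generator is never a variable of $A$ (if $F$ is a variable then $A=\bk[F,G]$, so $F$ is a good field generator), so by Theorem~\ref{982393298urcnj092} $F$ is a rectangular element of $A$, and then Corollary~\ref{0f29382c398y1hdq} gives $\pgoth=(G)$ for some variable $G$ of $A$. Writing $A=\bk[G,H]$ and renaming $(X,Y)=(G,H)$ yields $A=\bk[X,Y]$ with $(X)\in\Galg(F,A)$, which is the first assertion of (b). Now let $(X,Y)$ be any pair with $A=\bk[X,Y]$ and $(X)\in\Galg(F,A)$, and set $A'=\bk[X,Y/X]$. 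Then $Y=X\cdot(Y/X)\in A'$, so $A\subseteq A'$; since $A'$ is a polynomial ring in $X$ and $Y/X$ with $\Frac A'=\bk(X,Y)=\Frac A$, we have $A\preceq A'$, and $F$ is a field generator of $A'$ (being one of $A$, by a remark in the introduction). Let $\Phi\colon\Spec A'\to\Spec A$ be determined by $A\hookrightarrow A'$; inverting $X$ shows $A[1/X]=A'[1/X]$, so $\Phi$ is birational and an isomorphism over $\{X\neq0\}$. I then claim that $C:=V(X)$ is a missing curve of $\Phi$: the curve $V(X)\subset\Spec A'$ is contracted by $\Phi$ to the point $V(X,Y)$, and no point of $V(X)\subset\Spec A$ other than $V(X,Y)$ lies in the image of $\Phi$, so $C\cap\Phi(\Spec A')=\{V(X,Y)\}$ is a single closed point. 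Since $(X)\in\Galg(F,A)$ we have $C\in\Gamma(F,A)$, so $C$ is an element of $\Gamma(F,A)$ that is a missing curve of $\Phi$; by Proposition~\ref{99z9zcvx61mn24hytf}, $F$ is therefore not a bad field generator of $A'$, and since it is a field generator of $A'$ it is a good field generator of $A'$.

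\emph{For (a):} if $\Galg(F,A)=\emptyset$ then $\Gamma(F,A)=\emptyset$, so for every $A'\succeq A$ and the associated $\Phi\colon\Spec A'\to\Spec A$ the condition ``no element of $\Gamma(F,A)$ is a missing curve of $\Phi$'' holds vacuously; as $F$ is bad in $A$, Proposition~\ref{99z9zcvx61mn24hytf} gives that $F$ is a bad field generator of $A'$, and since $A'$ was arbitrary $F$ is very bad. Conversely, if $\Galg(F,A)\neq\emptyset$ then by (b) there is an $A'\succeq A$ in which $F$ is a good field generator, so $F$ is not bad in $A'$, hence not very bad. This gives the equivalence.

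The only step that is not pure bookkeeping is the verification in (b) that $V(X)$ is a missing curve of the blow-down $\Phi\colon\Spec\bk[X,Y/X]\to\Spec\bk[X,Y]$; the rest is unwinding the definitions of $\Galg$, $\Gamma$ and ``very bad'' and quoting Proposition~\ref{99z9zcvx61mn24hytf}, Theorem~\ref{982393298urcnj092} and Corollary~\ref{0f29382c398y1hdq}. I also need to remember to invoke the introduction's observation that being a field generator is preserved along $A\preceq A'$, so that ``not bad'' can be upgraded to ``good''.
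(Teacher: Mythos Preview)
Your proof is correct and follows essentially the same approach as the paper: prove (b) first via Theorem~\ref{982393298urcnj092}, Corollary~\ref{0f29382c398y1hdq}, and Proposition~\ref{99z9zcvx61mn24hytf}, then deduce (a) from (b) together with Proposition~\ref{99z9zcvx61mn24hytf}. The only cosmetic difference is that in (a) you argue the direction ``$\Galg(F,A)=\emptyset \Rightarrow$ very bad'' directly, whereas the paper argues the contrapositive ``not very bad $\Rightarrow \Galg(F,A)\neq\emptyset$''; the content is identical.
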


\begin{proof}
We first prove (b).  Suppose that $\Galg(F,A) \neq \emptyset$ and pick $\pgoth \in \Galg(F,A)$.
As $F$ is not a variable of $A$, it is a rectangular element of $A$ (by \ref{982393298urcnj092}),
so \ref{0f29382c398y1hdq} implies that $\pgoth = (X)$ for some variable $X$ of $A$;
this shows that there exists $(X,Y)$ such that $A = \bk[X,Y]$ and $(X) \in \Galg(F,A)$.
Given such a pair $(X,Y)$, let $B = \bk[X, Y/X]$ and
consider the birational morphism $\Phi : \Spec B \to \Spec A$ determined by the inclusion $A \to B$.
Then $C = V(X) \subset \Spec A$ is the unique missing curve of $\Phi$ and
the fact that $(X) \in \Galg(F,A)$ implies that $C \in \Gamma(F,A)$.
By \ref{99z9zcvx61mn24hytf}, it follows that $F$ is a good field generator of $B$; so (b) is proved.

To prove (a), it's enough to show that 
\begin{equation} \label {09s0xq001287876tgd8splkkhi0r}
\text{$F$ is not a very bad field generator of $A$ $\implies$ $\Galg(F,A) \neq \emptyset$}
\end{equation}
is true, because we already know, by (b), that  the converse of \eqref{09s0xq001287876tgd8splkkhi0r} is true.
Suppose that $F$ is not very bad.
Then there exists a ring $B$ such that $A \preceq B$ and $F$ is a good field generator of $B$.
Consider the birational morphism $\Phi : \Spec B \to \Spec A$ determined by the inclusion $A \to B$.
By \ref{99z9zcvx61mn24hytf}, some element of $\Gamma(F,A)$ is a missing curve of $\Phi$;
so $\Gamma(F,A) \neq \emptyset$ (and hence $\Galg(F,A) \neq \emptyset$).
This proves \eqref{09s0xq001287876tgd8splkkhi0r}, from which (a) follows.
\end{proof}

\begin{lemma} \label {0sf09k35l53k7efjh834rth576}
Consider $F \in A \preceq A'$, where $F$ is a field generator of $A$,
and let $\Phi : \Spec A' \to \Spec A$ denote the morphism determined by $A \hookrightarrow A'$.
Consider the following conditions on the triple $(F,A,A')$:
\begin{enumerate}

\item[(i)] $F$ is a bad field generator of $A$ and,
for all $C \in \Gamma(F,A)$, $C$ is not a missing curve of $\Phi$ and $C \nsubseteq \image(\Phi)$;

\item[(ii)] $F$ is a bad field generator of $A$ and,
for all $C \in \Gamma(F,A)$, $C$ is not a missing curve of $\Phi$ and
no curve $D \subset \Spec A'$ satisfies $\Phi(D) = C$ and is such that $\Phi|_D : D \to C$ is an isomorphism;

\item[(iii)] $F$ is a very bad field generator of $A'$.

\end{enumerate}
Then $\mbox{\rm (i)} \Rightarrow \mbox{\rm (ii)} \Leftrightarrow \mbox{\rm (iii)}$.
\end{lemma}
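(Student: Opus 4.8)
The plan is to derive the statement purely as a bookkeeping consequence of three earlier results: Proposition~\ref{99z9zcvx61mn24hytf}, Proposition~\ref{88x889adb823mdnfv}, and Lemma~\ref{56fewf8r8t34kd223lwgfuio}, together with the fact (noted in the introduction) that $F$ is automatically a field generator of $A'$ because $A\preceq A'$, and the bijection $\Galg(F,A')\to\Gamma(F,A')$. Throughout I would work with the morphisms $\Spec A'\xrightarrow{\Phi}\Spec A\xrightarrow{f}\Spec\bk[F]$ and with the injective map $\gamma\colon\Gamma(F,A')\to\Gamma(F,A)$, $C'\mapsto\Phi(C')$, supplied by Lemma~\ref{56fewf8r8t34kd223lwgfuio}.

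The implication (i) $\Rightarrow$ (ii) is essentially immediate, since (i) and (ii) share the hypothesis that $F$ is bad in $A$ and the clause ``no $C\in\Gamma(F,A)$ is a missing curve of $\Phi$'', and differ only in their last clause. So it is enough to observe that, for $C\in\Gamma(F,A)$, if some curve $D\subset\Spec A'$ satisfied $\Phi(D)=C$ with $\Phi|_D\colon D\to C$ an isomorphism, then $C=\Phi(D)\subseteq\image(\Phi)$; hence the hypothesis $C\nsubseteq\image(\Phi)$ in (i) forbids the existence of such a $D$, which is exactly what the last clause of (ii) demands.

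For (ii) $\Leftrightarrow$ (iii) I would rewrite (iii) as a conjunction and match its pieces with the two clauses of (ii). First, since ``very bad'' implies ``bad'' and $F$ is a field generator of $A'$, Proposition~\ref{88x889adb823mdnfv}(a) applied to $A'$ --- together with the bijection $\Galg(F,A')\to\Gamma(F,A')$ --- shows that (iii) is equivalent to: ``$F$ is a bad field generator of $A'$ and $\Gamma(F,A')=\emptyset$''. Next, Proposition~\ref{99z9zcvx61mn24hytf} replaces ``$F$ is a bad field generator of $A'$'' by ``$F$ is a bad field generator of $A$ and no element of $\Gamma(F,A)$ is a missing curve of $\Phi$'', reproducing the badness hypothesis and the first clause of (ii). Finally, because $\gamma$ is injective we have $\Gamma(F,A')=\emptyset\iff\image\gamma=\emptyset$, and by Lemma~\ref{56fewf8r8t34kd223lwgfuio}\eqref{324jhs7d5ghr2bv2npo24nc} (its last part) this holds if and only if no $C\in\Gamma(F,A)$ is of the form $\Phi(D)$ for a curve $D\subset\Spec A'$ on which $\Phi$ restricts to an isomorphism --- precisely the second clause of (ii). Chaining these three equivalences yields (ii) $\Leftrightarrow$ (iii).

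I do not expect a genuine obstacle; the proof is a chain of citations. The one point that requires care is the legitimacy of invoking Proposition~\ref{88x889adb823mdnfv}(a) with $A'$ in place of $A$, since that proposition presupposes $F$ to be a \emph{bad} field generator of $A'$: in the direction (iii) $\Rightarrow$ (ii) this is available because ``very bad'' implies ``bad'', while in the direction (ii) $\Rightarrow$ (iii) it must first be produced from Proposition~\ref{99z9zcvx61mn24hytf} out of the badness of $F$ in $A$ and the first clause of (ii), and only afterwards may \ref{88x889adb823mdnfv}(a) be applied. One should also check that the phrase ``no element of $\Gamma(F,A)$ is a missing curve of $\Phi$'' is used with the same meaning in (ii) and in Proposition~\ref{99z9zcvx61mn24hytf}, which it is.
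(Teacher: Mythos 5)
Your proposal is correct and follows essentially the same route as the paper: the paper likewise disposes of (i)\,$\Rightarrow$\,(ii) as immediate and then chains \ref{99z9zcvx61mn24hytf}, \ref{56fewf8r8t34kd223lwgfuio}\eqref{324jhs7d5ghr2bv2npo24nc} and \ref{88x889adb823mdnfv} to identify (ii) with ``$F$ is a bad field generator of $A'$ and $\Gamma(F,A')=\emptyset$'', hence with (iii). Your extra care about when \ref{88x889adb823mdnfv}(a) may be invoked for $A'$ is well placed (though note that $\Gamma(F,A')=\emptyset\iff\image\gamma=\emptyset$ holds for any map and does not need injectivity).
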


\begin{proof}
Clearly, $\mbox{\rm (i)} \Rightarrow \mbox{\rm (ii)}$.
In view of \ref{99z9zcvx61mn24hytf} and \ref{56fewf8r8t34kd223lwgfuio}\eqref{324jhs7d5ghr2bv2npo24nc},
condition (ii) is equivalent to:
\begin{quote}
$F$ is a bad field generator of $A'$ 
and no element of $\Gamma(F,A)$ is in the image of the map $\gamma : \Gamma(F,A') \to \Gamma(F,A)$
defined in \ref{56fewf8r8t34kd223lwgfuio},
\end{quote}
which is equivalent to
\begin{quote}
$F$ is a bad field generator of $A'$ and $\Gamma(F,A') = \emptyset$.
\end{quote}
So, by \ref{88x889adb823mdnfv}, (ii)$\Leftrightarrow$(iii).
\end{proof}

\begin{proposition}  \label {0d023epoi012p94938yte6q}
For a field generator $F$ of $A = \kk2$, the following are equivalent.
\begin{enumerata}

\item $F$ is a bad field generator of $A$;

\item there exists $A' \succeq A$ such that $F$ is a very bad field generator of $A'$;

\item there exists $(X,Y)$ such that $A = \bk[X,Y]$ and $F$ is a very bad field generator
of $\bk[X,Y/X]$.

\end{enumerata}
\end{proposition}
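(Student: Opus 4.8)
The plan is to prove the implications $\mathrm{(c)}\Rightarrow\mathrm{(b)}\Rightarrow\mathrm{(a)}\Rightarrow\mathrm{(c)}$, the first two being routine. For $\mathrm{(c)}\Rightarrow\mathrm{(b)}$: if $A=\bk[X,Y]$ then $\bk[X,Y/X]=\kk2$ and $\Frac\bk[X,Y/X]=\bk(X,Y)=\Frac A$, so $\bk[X,Y/X]\succeq A$, and (c) is the case $A'=\bk[X,Y/X]$ of (b). For $\mathrm{(b)}\Rightarrow\mathrm{(a)}$: a very bad field generator of $A'$ is in particular a bad field generator of $A'$ (apply Definition~\ref{Eir128372et12uhwjd} with $A'$ in place of $A$), and since $A\preceq A'$ the remarks preceding Definition~\ref{Eir128372et12uhwjd} then give that $F$ is a bad field generator of~$A$.

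The content is $\mathrm{(a)}\Rightarrow\mathrm{(c)}$. The idea is to exhibit a pair $(X,Y)$ with $A=\bk[X,Y]$ for which the triple $(F,A,\bk[X,Y/X])$ satisfies condition~(i) of Lemma~\ref{0sf09k35l53k7efjh834rth576}; that lemma then yields condition~(iii), namely that $F$ is a very bad field generator of $\bk[X,Y/X]$, which is exactly~(c). To apply condition~(i) I would first record (by a direct computation of fibres) that for the canonical morphism $\Phi\colon\Spec\bk[X,Y/X]\to\Spec\bk[X,Y]$ one has $\image(\Phi)=\Spec\bk[X,Y]\setminus\bigl(V(X)\setminus\{O\}\bigr)$, with $O=V(X)\cap V(Y)$, and that $V(X)$ is the unique missing curve of~$\Phi$. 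Hence, for a curve $C\subset\Spec\bk[X,Y]$, the requirement in~(i) that ``$C$ is not a missing curve of $\Phi$ and $C\nsubseteq\image(\Phi)$'' reduces to: $C\neq V(X)$, and $C$ meets $V(X)$ at some point other than~$O$. So it suffices to choose $(X,Y)$ with $V(X)\notin\Gamma(F,A)$ and with every $C\in\Gamma(F,A)$ meeting $V(X)$ off~$O$.

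To produce such a pair: since $F$ is bad it is not a variable of $A$, hence rectangular by~\ref{982393298urcnj092}, and by~\ref{3q939xuh23829x9mh812o}\eqref{c-84397bcfdv563} we may fix $(x,y)$ with $A=\bk[x,y]$ and either $\Gamma(F,A)\subseteq\{V(x),V(y)\}$ or $\Gamma(F,A)\subseteq\{V(x),V(x-1)\}$, so that every curve of $\Gamma(F,A)$ is one of the three lines $V(x)$, $V(y)$, $V(x-1)$. I would then take $X=x+y^{2}+y$ and $Y=y+1$. Since $y=Y-1$ and $x=X-Y^{2}+Y$, this is a change of coordinates valid in every characteristic, so $A=\bk[X,Y]$; moreover $X$ is associate in $A$ to none of $x$, $y$, $x-1$, so $V(X)\neq V(x),V(y),V(x-1)$ and thus $V(X)\notin\Gamma(F,A)$. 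Identifying $V(X)$ with $\Spec\bk[\bar y]$ via $A/(X)\cong\bk[\bar y]$ (so that $\bar x\equiv-(\bar y^{2}+\bar y)$), one computes that $O$ is the point $\bar y=-1$, whereas inside $V(X)$ the curves $V(x)$, $V(y)$, $V(x-1)$ cut out respectively $V(\bar y(\bar y+1))$, $V(\bar y)$, and $V(\bar y^{2}+\bar y+1)$. Since $0\neq-1$ and $\bar y+1\nmid\bar y^{2}+\bar y+1$ (as $\bar y^{2}+\bar y+1=\bar y(\bar y+1)+1$), each of these three sets is nonempty and contains a point different from~$O$. Therefore condition~(i) of Lemma~\ref{0sf09k35l53k7efjh834rth576} holds for $(F,A,\bk[X,Y/X])$, and the lemma completes the proof. (When $\Gamma(F,A)=\emptyset$ the ``for all $C$'' clause of~(i) is vacuous and the conclusion holds with the same $(X,Y)$.)

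The delicate point is this choice of $(X,Y)$. A linear substitution such as $X=x+y-1$ would crowd the (at most two) points $C\cap V(X)$, together with the new origin $O$, among the at most three $\bk$-rational points of the line $V(X)$, leaving no room when $\bk=\mathbb{F}_{2}$; using a ``curved'' variable $X$ (one involving $y^{2}$) both spreads these incidences apart and, over $\mathbb{F}_{2}$, supplies a non-$\bk$-rational intersection point, so the single formula above works uniformly over every base field. Apart from that, the proof is bookkeeping about ``missing curve'' and ``$\image(\Phi)$'' for the affine modification $\Spec\bk[X,Y/X]\to\Spec\bk[X,Y]$, plus the invocation of Lemma~\ref{0sf09k35l53k7efjh834rth576}.
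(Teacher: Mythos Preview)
Your proof is correct and follows the same strategy as the paper: invoke Corollary~\ref{3q939xuh23829x9mh812o}\eqref{c-84397bcfdv563} to pin $\Gamma(F,A)$ down to at most two of the three lines $V(x),V(y),V(x-1)$, choose new coordinates so that the triple $(F,A,\bk[X,Y/X])$ satisfies condition~(i) of Lemma~\ref{0sf09k35l53k7efjh834rth576}, and conclude. The only difference is cosmetic: the paper handles the two cases of \ref{3q939xuh23829x9mh812o}\eqref{c-84397bcfdv563} with two separate substitutions---$(X_1,Y_1)=(X-Y,\,Y-1)$ when $\Galg(F,A)\subseteq\{(X),(Y)\}$ and $(X_1,Y_1)=(X-Y^2+Y,\,Y)$ when $\Galg(F,A)\subseteq\{(X),(X-1)\}$---whereas your single quadratic change $(X,Y)=(x+y^2+y,\,y+1)$ covers all three candidate lines simultaneously, which is a small but pleasant economy.
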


\begin{proof}
Implications $\text{(c)} \Rightarrow \text{(b)} \Rightarrow \text{(a)}$ are obvious.
Assume that (a) holds and let us prove (c). 
By \ref{3q939xuh23829x9mh812o}, there exists a pair $(X,Y)$ satisfying $A = \bk[X,Y]$ and 
$$
\text{$\Galg(F,A) \subseteq \{ (X), (Y) \}$\qquad or\qquad (ii) $\Galg(F,A) \subseteq \{ (X), (X-1) \}$.}
$$
Define
$$
(X_1,Y_1) = \begin{cases}
(X-Y, Y-1) & \text{in case (i)}, \\
(X-Y^2+Y,Y) & \text{in case (ii)}.
\end{cases}
$$
Note that $A = \bk[X_1,Y_1]$ and define $A' = \bk[X_1,Y_1/X_1]$; then $F \in A \preceq A'$.
We claim that $F$ is a very bad field generator of $A'$.
To see this, consider the birational morphism
$\Phi : \Spec A' \to \Spec A$ determined by the inclusion $A \hookrightarrow A'$.
Consider the curve $D \subset \Spec A$ and the point $P \in \Spec A$ defined by
$D=V(X_1)$ and $\{P\} = V(X_1,Y_1)$;  then $D$ is the unique missing curve of $\Phi$ and
$\image\Phi = (\Spec(A) \setminus D) \cup \{P\}$.

In case (i), we have $V(X) \cap D \nsubseteq \{P\}$ and $V(Y) \cap D \nsubseteq \{P\}$;
in case (ii), $V(X) \cap D \nsubseteq \{P\}$ and $V(X-1) \cap D \nsubseteq \{P\}$.
So, in both cases, we have $C \cap D \nsubseteq \{P\}$ for all $C \in \Gamma(F,A)$;
consequently,
\begin{center}
$C \nsubseteq \image\Phi$ for all $C \in \Gamma(F,A)$.
\end{center}
Moreover, $D \notin \Gamma(F,A)$, so no element of $\Gamma(F,A)$ is a missing curve of $\Phi$.
So $(F,A,A')$ satisfies condition (i) of \ref{0sf09k35l53k7efjh834rth576};
by that result, $F$ is a very bad field generator of $A'$.
So condition~(c) holds.
\end{proof}

\medskip
To our knowledge, \cite{JanThesis}, \cite{Rus:fg2} and \cite{Cassou-BadFG} 
are the only publications giving examples of bad field generators.
Example~\ref{GoodBadUgly}, below, shows how the results of this paper can be used to easily produce 
new examples from old ones.
It also gives examples of very bad field generators, of bad field generators which are not very bad and
of good ones which are not very good.
(Very good field generators are easy to find: see \ref{pf9823f898012dj}.)

\begin{example} \label {GoodBadUgly}
Let $F \in A = \bk[X,Y] = \kk2$ (where $\bk$ is any field) be the following polynomial of degree~$21$:
\begin{equation*}
( Y^2(XY + 1)^4 + Y(2XY + 1)(XY + 1) + 1)
(Y(XY + 1)^5 +2XY(XY + 1)^2 +X)\,.
\end{equation*}
It is shown in \cite{Rus:fg2} (modulo a typo corrected in \cite{Cassou-BadFG}) that $F$ is a bad field generator of $A$
with $\Delta(F,A) = [2,3]$.
Note that $(X,Y) \in \Rec^+(F,A)$ and that $\bideg_A(F) = (9,12)$.
It is not difficult to deduce from \ref{09239023r02n02b27c2c8h2} that $\Galg(F,A) = \{ (Y) \}$.

If $\bk=\Comp$ then the fact that $\Delta(F,A)=[2,3]$ can also be deduced from the proof of \cite[2.3.10]{Cassou-BadFG},
essentially by noting that
\begin{align*}
F(-t^3+t^4u, 1/t^3)&=2u^3-3u^6+u^9+th_1(t,u) = \phi_1(u^3)+th_1(t,u), \\
F(1/t^6, -t^4+t^5/3+t^6u)&=36u^2+54u+20+th_2(t,u) = \phi_2(u)+th_2(t,u),
\end{align*}
where $\phi_1,\phi_2,h_1,h_2$ are polynomials, $\deg\phi_1=3$ and $\deg\phi_2=2$
(refer to the proof of \cite[2.3.10]{Cassou-BadFG}).

In the following three paragraphs we regard Russell's polynomial $F$ as an element of certain overrings of $A$.
By doing so,  we obtain new examples of field generators
(which could be called ``the good, the very bad, and the ugly'').

\medskip
\noindent (a) Since $\Galg(F,A) = \{ (Y) \}$, \ref{88x889adb823mdnfv} implies that
$F$ is not a very bad field generator of $A$.
By the same result, $F$ is a good (but not very good) field generator of $\bk[X/Y, Y]$.
In particular,
{\it there exist bad field generators that are not very bad,
and good field generators that are not very good.}

\medskip
\noindent (b) Let $A' = \bk[X, (Y-1)/X]$,
then $(F,A,A')$ satisfies condition (i) of \ref{0sf09k35l53k7efjh834rth576};
by that result, $F$ is a very bad field generator of $A'$.
So, {\it very bad field generators do exist}.
One can see that $\deg_{(X',Y')}(F)=33$, where $X'=X$ and $Y'=(Y-1)/X$. 
Using \ref{difja;skdfj;aksd}, one sees that $\Delta(F, A')=[2,3,3]$.

\medskip
\noindent (c) Choose distinct elements $\lambda_1, \dots, \lambda_N$ of $\bk$ ($N\ge1$)
and let $P(T) = \prod_{i=1}^N (T-\lambda_i)^{e_i} \in \bk[T]$
(where $T$ is an indeterminate and $e_i \ge 1$ for all $i$).  Pick $d\ge1$ and define 
$$
A'' = \bk[X+Y^d, Y/P(X+Y^d) ].
$$
Then $A = \bk[X,Y] = \bk[X+Y^d, Y] \preceq A''$.
Consider the morphisms
$\Spec A'' \xrightarrow{\Phi} \Spec A \xrightarrow{f} \Spec \bk[F]$
determined by the inclusions $\bk[F] \hookrightarrow A  \hookrightarrow A''$.
Then the missing curves of $\Phi$ are:
$$
C_i:\ \  X+Y^d=\lambda_i, \quad i = 1, \dots, N.
$$
Each $C_i$ is $f$-horizontal and the degree of $C_i \hookrightarrow \Spec A \xrightarrow{f} \Spec\bk[F]$ is
$\delta_i = \deg_t F( \lambda_i - t^d, t ) = 9d+12$.
So, by \ref{difja;skdfj;aksd},
\begin{equation} \label {09329r92x8y3r2d92a}
\text{$\Delta(F, A'') = [2,3,\, 9d+12, \dots, 9d+12 ]$ where ``$9d+12$'' occurs $N$ times.}
\end{equation}
By \ref{82364r5gg658943k}, $F$ is a bad field generator of $A''$.
If we write $A'' = \bk[u,v]$ with $u = X+Y^d$ and $v = Y/P(X+Y^d)$ then one can see that $\Galg(F,A'')= \{ (v) \}$,
so (by \ref{88x889adb823mdnfv}) $F$ is not a very bad field generator of $A''$.

Let $A''' = \bk[u, (v-1) / (u-\lambda_1) ]$
and let $\Spec A''' \xrightarrow{\Psi} \Spec A'' \xrightarrow{f''} \Spec\bk[F]$ be the morphisms determined by
$\bk[F] \hookrightarrow A'' \hookrightarrow A'''$.
Define $C \subset \Spec A''$ and $Q \in \Spec A''$ by $C = V(u-\lambda_1)$ and $\{Q\} = V(u-\lambda_1,v-1)$.
Then $C$ is the unique missing curve of $\Psi$ and $\image\Psi = ( \Spec(A'') \setminus C ) \cup \{Q\}$.
It follows that $(F,A'',A''')$ satisfies condition (i) of \ref{0sf09k35l53k7efjh834rth576};
by that result, $F$ is a very bad field generator of $A'''$.
As $f''(C)$ is a point, $\Miss_{\text{\rm hor}}(\Psi,f'') = \emptyset$;
so, by \ref{difja;skdfj;aksd}, $\Delta(F,A''') = \Delta(F,A'')$ (refer to \eqref{09329r92x8y3r2d92a}).
In particular, {\it there exist very bad field generators
with arbitrarily large numbers of dicriticals of arbitrarily large degrees.}
\end{example}

\begin{examples}  \label {92898fhuqnvalh893}
Let $\gamma = | \Galg(F,A) |$ where $F$ is a bad field generator of $A = \kk2$.
Then $\gamma \in \{0,1,2\}$ by \ref{3q939xuh23829x9mh812o}.
The three cases arise:
\begin{enumerata}

\item By \ref{88x889adb823mdnfv}, the condition $\gamma=0$ is equivalent to $F$ being very bad, and
very bad field generators do exist by \ref{GoodBadUgly}. So $\gamma=0$ arises.

\item The polynomial $F \in A = \bk[X,Y]$ given at the beginning of \ref{GoodBadUgly} (Russel's polynomial)
has $\Galg(F,A) = \{ (Y) \}$, so  $\gamma=1$.

\item It is stated in \cite[2.3.11]{Cassou-BadFG} and proved in \cite[2.3.10]{Cassou-BadFG} that the polynomial
\begin{multline*}
F = X(X^5Y^3 +1)^3 +Y(X^2Y+1)^8 - X^{16}Y^9 +4XY+6X^2Y+19X^3Y^2 \\
 +8X^4Y^2 + 36X^5Y^3 + 34X^7Y^4 + 16X^9Y^5
\end{multline*}
is a bad field generator of $A = \Comp[X,Y]$.
It follows from the proof of \cite[2.3.10]{Cassou-BadFG} that $\Delta(F,A) = [3,2]$
(indeed, in the setting of that proof, this follows by noting that
\begin{align*}
F(1/t^3,-t^5+t^6u) &=27u^3+72u^2+66u+20+tk_1(t,u) = \phi_1(u) + tk_1(t,u), \\
F(it^4+t^5u, 1/t^8) &=256u^8+161u^4-1+tk_2(t,u) = \phi_2(u^4) + tk_2(t,u),
\end{align*}
where $\phi_1,\phi_2,k_1,k_2$ are polynomials, $\deg\phi_1=3$ and $\deg\phi_2=2$).
It is obvious that $(X,Y) \in \Rec(F,A)$ and that $\bideg_A(F) = (\deg_YF, \deg_XF) = (9,16)$.
It follows from \ref{09239023r02n02b27c2c8h2} that $\Galg(F,A) = \{ (X), (Y) \}$,
so $\gamma=2$.

\end{enumerata}
\end{examples}


\end{document}